% !TeX spellcheck = <none>
\documentclass[12pt,reqno]{amsart}
\topmargin=-0.7in \hoffset=-1cm \voffset=2cm \textheight=220mm
\textwidth=150mm

\usepackage{graphicx}
\usepackage{amssymb}
\usepackage{amsfonts}
\usepackage{amsmath,latexsym,amssymb,amsfonts,amsbsy, amsthm, enumerate, enumitem}
\usepackage[usenames]{color}
\usepackage{xspace,colortbl}
\usepackage{mathrsfs}
\usepackage[colorlinks,linkcolor=blue,citecolor=blue]{hyperref}
\usepackage[titletoc]{appendix}
\usepackage{pdfsync}
\usepackage{esint}
\allowdisplaybreaks

\setcounter{equation}{0}

\newcommand{\beq}{\begin{equation}}
	\newcommand{\eeq}{\end{equation}}
\newcommand{\ben}{\begin{eqnarray}}
	\newcommand{\een}{\end{eqnarray}}
\newcommand{\beno}{\begin{eqnarray*}}
	\newcommand{\eeno}{\end{eqnarray*}}
\newcommand{\R}{\mathbb{R}}
\newtheorem{thm}{Theorem}[section]
\newtheorem{defi}[thm]{Definition}
\newtheorem{lem}[thm]{Lemma}
\newtheorem{prop}[thm]{Proposition}
\newtheorem{coro}[thm]{Corollary}
\newtheorem{rmk}[thm]{Remark}

\def\XXint#1#2#3{{\setbox0=\hbox{$#1{#2#3}{\int}$ }
		\vcenter{\hbox{$#2#3$ }}\kern-.6\wd0}}

%%% Title and authors

\title[Liouville  for supercritical Fujita equation]{A Liouville theorem for supercritical Fujita equation and its applications}

\author[K. Wang]{Kelei Wang$^\dag$}
\address{$^\dag$School of Mathematics and Statistics \\ Wuhan University\\
	Wuhan 430072, China}
\email{wangkelei@whu.edu.cn}

\author[J. Wei]{Juncheng Wei$^\ast$}
\address{$^\ast$Department of Mathematics \\ Chinese University of Hong Kong\\
	Shatin, NT, Hong Kong}
\email{wei@math.cuhk.edu.hk}

\author[K. Wu]{Ke Wu$^\ddag$}
\address{$^\ddag$Department of Mathematics \\ Chinese University of Hong Kong\\
	Shatin, NT, Hong Kong}
\address{$^\ddag$School of Mathematics\\ Yunnan Normal University\\ Kunming, 650500, China}
\email{kewu001@cuhk.edu.hk}

\thanks{K. Wang and J. Wei are both partially supported by  National Key R\&D Program of China (No. 2022YFA1005602). K. Wang is also supported by  the National Natural Science Foundation of China (No. 12131017, No. 12221001 and No. 12425108).  J. Wei is also  supported  by  Hong Kong General Research Fund ``New frontiers in singular limits of nonlinear partial differential equations". K. Wu is supported by the  National Natural Science Foundation
of China (No. 12401264).  We are grateful to the referee for constructive suggestions, which have been invaluable in enhancing the quality of this paper.}

\keywords{Fujita equation; ancient solution;  Liouville theorem; blow-up.}

\subjclass[2020]{35K58;35B44;35B45.}

\begin{document}

\begin{abstract}
We prove a Liouville theorem for ancient solutions to the supercritical Fujita equation
\[\partial_tu-\Delta u=|u|^{p-1}u, \quad -\infty <t<0, \quad p>\frac{n+2}{n-2},\]
which says if $u$ is close to the ODE solution $u_0(t):=(p-1)^{-\frac{1}{p-1}}(-t)^{-\frac{1}{p-1}}$ at large scales, then it is an ODE solution (i.e. it depends only on $t$). This implies a stability property for ODE blow-ups in this problem.

 As an application of these results, we show that for  a suitable weak solution, its singular set  at the end time can be decomposed into two parts: one part is relatively open and $(n-1)$-rectifiable, and it is characterized by the property that tangent functions at these points are   the two constants $\pm(p-1)^{-\frac{1}{p-1}}$;  the other part is relatively closed and its Hausdorff dimension is not larger than $n-\left[2\frac{p+1}{p-1}\right]-1$.
\end{abstract}

\maketitle

\tableofcontents

%%%%%%%%%%%%%%%%%%%%%%%%%%%%%%%%%%%%%%%%%%%%%%%%%%%%%%%%%%%%%%%%%%%%%%%%%%%%%%%%%%%%%%%%%%%%%%%%%%%%%%%%%%%%%%%%%%%%%%%%%%%%%%%%%%%%%%%%%%%%%%%%%%%%%%

\section{Introduction}\label{sec introduction}
	\setcounter{equation}{0}

This paper is a continuation of our previous work \cite{Wang-Wei-Wu} on the blow-up analysis for Fujita equation (or nonlinear heat equation)  
\begin{equation}\label{eqn}\tag{F}
    \partial_tu-\Delta u=|u|^{p-1}u
\end{equation}
in the supercritical case $p>\frac{n+2}{n-2}$ (and hence the spatial dimension $n\geq 3$). The main goal of this paper is to prove a Liouville theorem for ancient solutions to \eqref{eqn}, and then use it to study the structure of end time singularities.

The blow-up problem of \eqref{eqn}
has attracted a lot of attention since the work of Fujita \cite{Fujita1966}, see e.g. Giga-Kohn \cite{Giga-Kohn1985, Giga-Kohn2, Giga-Kohn1989} and the monograph of Quittner-Souplet \cite{Quittner-Souplet2019}. By now, the subcritical case ($1<p<\frac{n+2}{n-2}$) has been very well understood.
For example, by Giga-Kohn \cite{Giga-Kohn2} (in the case of positive solutions)  and   Giga-Matsui-Sasayama \cite{Giga04} (in the general case of sign-changing solutions), all interior blow-ups are Type I. That is, if $T$ is the first blow-up time (i.e. $u$ is smooth before $T$), then
\begin{equation}\label{Type I}
   \|u(\cdot, t)\|_{L^\infty(\mathbb{R}^{n})}\leq C(T-t)^{-\frac{1}{p-1}}.
\end{equation}
Notice that this blow-up rate is consistent with the blow-ups of the   ODE solutions to \eqref{eqn},
\begin{equation}\label{ODE solution}
u_T(t)=\pm(p-1)^{-\frac{1}{p-1}}(T-t)^{-\frac{1}{p-1}}.
\end{equation}

In contrast,  if
\[\limsup_{t\to T} (T-t)^{\frac{1}{p-1}}\|u(\cdot, t)\|_{L^\infty(\mathbb{R}^{n})}=+\infty,\]
then it is called Type II blow-ups. In the critical ($p=\frac{n+2}{n-2}$) and supercritical ($p>\frac{n+2}{n-2}$) case,  there do exist Type II finite time blow-up solutions, see \cite{Wei-zhou2020,Del-Musso-Wei2019, Del-Wei-zhou2020,  Velazquez2000, Harada2020(1), Harada2020(2), Schweyer} and \cite{Collot2017-nonradial,Collot-Merle-Raphael2020,Merle-R-S2020-TypeI,Seki2018-JL,Seki2020-Lepin} respectively.

Concerning the supercritical case,
in a series of works \cite{Merle-Matano2004,Merle-Matano2009,Merle-Matano2011}, Matano and Merle systematically study the blow-up problem of \emph{radially symmetric solutions}. By using the intersection number technique,  they are able to give a complete classification of Type I and Type II blow-up behavior (among many other results). See also Mizoguchi  \cite{Mizoguchi1, Mizoguchi2, Mizoguchi3, Mizoguchi4} for more results on the blow-up behavior of radially symmetric solutions. Some of these results will be generalized to the setting \emph{without any symmetry assumption} in this paper.

The main finding of this paper can be summarized as: the ODE  solution $u_T$ plays a special role in  the blow-up problem of \eqref{eqn}. For example, it is stable with respect to base point change, and it contributes to the $(n-1)$ dimensional part in the blow-up set (while the remaining part has a lower Hausdorff dimensional bound). This will be achieved by establishing  a Liouville theorem for ancient solutions of \eqref{eqn}, which gives a characterization of this ODE solution.

\section{Setting and main results}\label{sec main results}
	\setcounter{equation}{0}

\subsection{Setting} %In this paper, we are mainly interested in the supercritical case. 
We  work in the class of \emph{suitable weak solutions} defined in \cite{Wang-Wei2021}. First let us recall the  definition.
\begin{defi}[Suitable weak   solution] \label{definition suitable weak solution} 
Let $B_{1}
$ be the unit ball in $\mathbb{R}^{n}$. A function $u$, defined on the backward parabolic cylinder $Q_1^-:=B_1\times(-1,0)$, is a suitable weak solution of \eqref{eqn}, if $\partial_tu,\nabla u\in L^2(Q_1^-)$, $u\in L^{p+1}(Q_1^-)$, and
		\begin{itemize}
			\item  $u$ satisfies \eqref{eqn} in the weak sense, that is,  for any $\eta\in C_0^\infty(Q_1^{-})$,
			\begin{equation}\label{weak solution I}
				\int_{Q_1^{-}}\left[\partial_t u\eta+\nabla u\cdot\nabla \eta-|u|^{p-1}u\eta\right]dxdt=0;
			\end{equation}
			\item    $u$ satisfies the localized energy inequality: for any $\eta\in C_0^\infty(Q_1^{-})$,
			\begin{equation}\label{energy inequality I}
				\int_{Q_1^{-}}\left[ \left(\frac{|\nabla u|^2}{2}-\frac{|u|^{p+1}}{p+1}\right)\partial_t\eta^2-|\partial_tu|^2\eta^2-2 \eta\partial_tu\nabla u\cdot\nabla\eta\right]dxdt\geq 0;
			\end{equation}
			\item $u$ satisfies the stationary condition: for any $Y\in C_0^\infty(Q_1^{-}, \R^n)$,
			\begin{equation}\label{stationary condition I}
				\int_{Q_1^{-}} \left[\left(\frac{|\nabla u|^2}{2}-\frac{|u|^{p+1}}{p+1}\right)\mbox{div}Y-DY(\nabla u,\nabla u)-\partial_tu \nabla u\cdot Y\right]dxdt=0.
			\end{equation}
		\end{itemize}
 \end{defi}
 A smooth solution satisfies all of these conditions, which can be easily verified by integration by parts. However, a suitable weak solution need not to be smooth everywhere. 
 \begin{defi}[Regular and singular set]
    Given a suitable weak solution $u$, a point $(x_{0}, t_{0})$ is said to be a regular point of $u$ if there exists $0<r<1$ such that $u$ is bounded in $Q_{r}^{-}(x_{0}, t_{0}):=B_{r}(x_{0})\times (t_{0}-r^{2}, t_{0})$. The set containing all the regular points is denoted by $\text{Reg}(u)$. 
   
    The complement of $\text{Reg}(u)$ is the singular set, which is denoted by $\text{Sing}(u)$.
 \end{defi}
 Once   $u\in L^\infty(Q_{r}^{-}(x_0,t_0))$, then by standard parabolic regularity theory, it is a classical solution in $Q_r^{-}(x_0,t_0)$. Therefore for each time slice $t$, the set $\{x:(x, t)\in\text{Reg}(u)\}$ is open and $\{x:(x, t)\in \text{Sing}(u)\}$ is closed. 
Clearly, $(x_{0}, t_{0})\in\text{Sing}(u)$ only if there exists a sequence $(x_{k}, t_{k})\to(x_0,t_0)$ such that $\lim_{k\rightarrow\infty}|u(x_{k}, t_{k})|=+\infty$. So the singular set can also be called \emph{the  blow-up set}.

 Throughout the paper, we denote
 \[  m=2\frac{p+1}{p-1}.\]
And unless otherwise specified, we will always assume that \textbf{\textit{$m$ is not an integer}}.

 \begin{defi}[Ancient solution]
A function $u$ is said to be an ancient solution of the equation \eqref{eqn} if for any $Q_{r}^{-}(x_{0}, t_{0})\subset\mathbb{R}^{n}\times (-\infty, 0)$, $u$ is a suitable weak solution of \eqref{eqn} in $Q_{r}^{-}(x_{0}, t_{0})$.
\end{defi}
\begin{defi}[The class $\mathcal{F}_M$]\label{definition class of F}
Given $M>0$,  $\mathcal{F}_M$ denotes the set of ancient solutions of \eqref{eqn} satisfying the following  Morrey estimate:
 for any  
 $(x_0, t_0)\in\R^{n}\times (-\infty, 0]$  and $r>0$,  
 \begin{equation}\label{Morreyeatimates}
 r^{m-2-n}\int_{Q_r^-(x_0, t_0-r^2)}(|\nabla u|^{2}+|u|^{p+1})dx dt+r^{m-n}\int_{Q_r^-(x_0, t_0-r^2)}
 (\partial_{t}u)^2 dxdt \leq M.
 \end{equation}
\end{defi}
\begin{rmk}
   This Morrey estimate \eqref{Morreyeatimates} is scaling invariant in the following sense: if $u$ satisfies \eqref{Morreyeatimates}, then for any $(x_0,t_0)\in\R^n\times(-\infty,0]$ and $\lambda>0$, 
   \[u_{x_{0}, t_{0},\lambda}(x, t):=\lambda^{\frac{2}{p-1}}u(x_{0}+\lambda x, t_{0}+\lambda^{2}t)\] satisfies the same estimate. (Note that $u_{x_{0}, t_{0},\lambda}$ is also a solution of \eqref{eqn}. This is exactly the scaling invariance of \eqref{eqn}.)

   The condition \eqref{Morreyeatimates} arises from the blow-up analysis for \eqref{eqn}. It is usually deduced from Giga-Kohn's monotonicity formula, cf.  \cite[Proposition 2.2]{Giga-Kohn2} or \cite[Proposition 3.1]{Giga-Kohn1989}. It should be emphasized that there is a time lag in $Q_r^-(x_0, t_0-r^2)$, that is, we must use $t_0-r^2$ instead of $t_0$.  We  notice that a different Morrey space estimate was also proved in Souplet \cite{Souplet2017-Morrey}.
\end{rmk}
\begin{defi}[The class $\mathcal{S}_M$ of backward self-similar solutions]
 A function $w$ defined on $\mathbb{R}^n$, belongs to 
$\mathcal{S}_M$ if $(-t)^{-\frac{1}{p-1}}w(\frac{x}{\sqrt{-t}})\in \mathcal{F}_M$.
\end{defi}
The function $u(x,t):=(-t)^{-\frac{1}{p-1}}w(\frac{x}{\sqrt{-t}})$ satisfies
\[u(\lambda x,\lambda^2t)=\lambda^{\frac{2}{p-1}}u(x,t), \quad \forall \lambda>0,\]
hence the name ``backward self-similar solutions". Because $u$ is a solution to \eqref{eqn}, it is well known that $w$ is a solution to the elliptic equation
\begin{equation}\label{SC1}\tag{SS}
-\Delta w+\frac{y}{2} \cdot\nabla w+\frac{1}{p-1}w=|w|^{p-1}w,\quad\text{in}~\mathbb{R}^{n}.
\end{equation}
Notice that since we require more conditions on suitable weak solutions, for $w\in\mathcal{S}_M$, it also satisfies some integral identities inherited from  \eqref{energy inequality I} and \eqref{stationary condition I}. 

By denoting
\[\kappa=(p-1)^{-\frac{1}{p-1}},\]
it is clear that $0$ and $\pm\kappa$ are the only constant solutions of \eqref{SC1}. The constant solutions $\pm\kappa$ correspond to the ODE solutions $\pm u_0$ defined in \eqref{ODE solution}. As said before, we will show that these constant solutions are very special among solutions of \eqref{SC1}.

\subsection{Liouville theorem}
Now we can state the first main result of this paper, which is a Liouville theorem for ancient solutions.
 \begin{thm}\label{maintheorem1}
Given an ancient solution $u\in \mathcal{F}_{M}$, if there exists $\lambda_{k}\to +\infty$ such that
\begin{equation}\label{main assumption}
\lambda_{k}^{\frac{2}{p-1}}u(\lambda_{k}x,\lambda_{k}^{2} t)\to \kappa(-t)^{-\frac{1}{p-1}} \quad  \text{strongly in} ~~ L_{loc}^{p+1}(\mathbb{R}^{n}\times(-\infty, 0)),
\end{equation}
then there exists a constant $T\geq 0$ such that
 \[u(x, t)=\kappa(T-t)^{-\frac{1}{p-1}},\quad\text{in}~\mathbb{R}^{n}\times(-\infty, 0).\]
 \end{thm}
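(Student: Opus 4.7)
The plan is to pass to backward self-similar variables at $(0,0)$, write $w-\kappa$ as the perturbation from the constant self-similar profile, and use spectral analysis of the linearized operator together with the Morrey bound~\eqref{Morreyeatimates} to exclude every Hermite mode except the trivial constant. The key preliminary observation is that the shift identity $u_{x_0,t_0,\lambda}(x,t)=u_{0,0,\lambda}(x+x_0/\lambda,\ t+t_0/\lambda^2)$ makes the hypothesis self-improving: along the same subsequence $\lambda_k$, one has $u_{x_0,t_0,\lambda_k}\to\kappa(-t)^{-1/(p-1)}$ in $L^{p+1}_{\mathrm{loc}}$ at every base point $(x_0,t_0)$. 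Writing $y=x/\sqrt{-t}$, $\tau=-\log(-t)$, and $w(y,\tau)=(-t)^{1/(p-1)}u(x,t)$, this reads $w(\cdot,\tau)\to\kappa$ in $L^{p+1}_{\mathrm{loc}}$ as $\tau\to-\infty$, and an $\varepsilon$-regularity argument for suitable weak solutions (applicable because $u_{\lambda_k}$ is close to the smooth profile $\kappa(-t)^{-1/(p-1)}$) upgrades this convergence to $C^\infty_{\mathrm{loc}}$ on $\R^n\times(-\infty,0)$.

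Set $\tilde v:=w-\kappa$. A direct computation, using $\kappa^p=\kappa/(p-1)$ and $p\kappa^{p-1}=p/(p-1)$, gives
\[\partial_\tau\tilde v=L\tilde v+N(\tilde v),\qquad L=\Delta-\tfrac{y}{2}\cdot\nabla+1,\quad N(\tilde v)=O(|\tilde v|^2),\]
and the hypothesis says $\tilde v(\cdot,\tau)\to 0$ as $\tau\to-\infty$. The operator $L$ is self-adjoint on the Gaussian-weighted space $L^2_\rho$ with $\rho(y)=(4\pi)^{-n/2}e^{-|y|^2/4}$, and its spectrum is $\{1-k/2:k\in\mathbb{N}\}$ with Hermite polynomial eigenfunctions. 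At the linear level, any solution decaying as $\tau\to-\infty$ is supported on the strictly positive eigenvalues only: the $k=0$ constant mode (eigenvalue $1$) and the $k=1$ linear-in-$y$ modes (eigenvalue $1/2$). A linear-in-$y$ contribution $\tilde v\sim c_i\,e^{\tau/2}y_i$ translates in original coordinates to $v:=u-\kappa(-t)^{-1/(p-1)}\sim c_i\,x_i\,(-t)^{-p/(p-1)}$, growing linearly in $|x|$. Testing~\eqref{Morreyeatimates} at base points $(x_0,0)$ with $|x_0|\to\infty$ and $r=1$ then yields an $L^{p+1}$ integral of order $|x_0|^{p+1}$, violating the uniform bound $M$ unless $c_i\equiv 0$. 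An analogous Morrey test excludes any higher Hermite mode that $N(\tilde v)$ might excite. Hence $\tilde v(y,\tau)=c_0(\tau)$ is independent of $y$, so $w$, and therefore $u$, depend only on $t$.

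Substituting $u=u(t)$ into~\eqref{eqn} reduces the problem to the scalar ODE $\dot u=|u|^{p-1}u$; the sign selected by the hypothesis gives $u(t)=\kappa(T-t)^{-1/(p-1)}$ for some $T\in\R$, and the requirement that $u$ be defined on all of $\R^n\times(-\infty,0)$ forces $T\geq 0$.

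The main obstacle is the Morrey-based exclusion of the growing Hermite modes. The Hermite decomposition of $\tilde v$ is a weighted-$L^2_\rho$ notion while~\eqref{Morreyeatimates} is unweighted at large spatial scales, so bridging the two requires the upgraded smooth convergence together with quantitative estimates that translate the spectral content of $\tilde v$ into pointwise growth of the corresponding contribution to $u$. The nonlinear coupling in $N(\tilde v)$ is equally delicate: one must verify that no combination of allowed (exponentially decaying) modes excites, through the nonlinearity, the growing-in-$y$ modes that would otherwise be Morrey-excluded only at linear order. This is standard center/unstable-manifold machinery but requires care in the parabolic weighted setting.
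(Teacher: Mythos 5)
Your starting point (shift invariance of the blow-down hypothesis, passing to self-similar coordinates, linearizing around $\kappa$, and analyzing the Hermite spectrum of $L=\Delta-\tfrac{y}{2}\cdot\nabla+1$) is a natural one, and the computation of the spectrum is correct. However, there are two structural gaps that in the paper account for essentially the entire length of the proof, and which your plan skips over.

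First, you write that ``the hypothesis says $\tilde v(\cdot,\tau)\to 0$ as $\tau\to-\infty$.'' It does not: the hypothesis gives $w(\cdot,\tau_k)\to\kappa$ only along the particular sequence $\tau_k=-2\log\lambda_k$. In the supercritical, non-smooth setting, a compactness argument produces tangent functions at $-\infty$ along subsequences, and these limits could a priori differ from one subsequence to another. The paper's entire Section 5 (the rigidity Theorems \ref{thm rigidity 1} and \ref{thm rigidity 2}, proven via the Colding--Ilmanen--Minicozzi iteration scheme in Appendices \ref{sec proof of Proposition 4.4} and \ref{sec proof of proposition 4.5}) exists precisely to show that $\kappa$ is isolated in $\mathcal{S}_M$, and Section 6 then combines this isolation with the continuity of $s\mapsto\|w(\cdot,s)-\kappa\|_{L^2_w}$ to upgrade subsequential to full convergence as $\tau\to-\infty$. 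Without that rigidity input, the $\varepsilon$-regularity by itself only upgrades the topology of convergence along the given sequence, not the range of $\tau$ over which it holds.

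Second, the Hermite analysis you propose requires $w(\cdot,\tau)$ to lie in $L^2_\rho$ with uniform control, and in fact to be bounded, over a full neighborhood of $\tau=-\infty$. For a suitable weak solution, $w$ may be singular somewhere in $\R^n\times(-\infty,0)$, and nothing in the hypothesis directly forbids this. The paper deals with this by introducing the auxiliary ``regularity time'' function $RT(x)$ in Section 7, proving $RT(x)=o(|x|^2)$, and then running a point-selection/rescaling argument in Section 9 to reduce to the case of bounded $RT$, where $w$ is genuinely smooth, positive and bounded for $s\ll 0$. Your proposal jumps directly to the linearized spectral picture, and the only justification offered for the regularity upgrade is a vague appeal to $\varepsilon$-regularity. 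That does not give global-in-space smoothness on all of $\R^n\times(-\infty,0)$.

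Beyond these two omissions, the mode-by-mode elimination you sketch is also incomplete on its own terms: at $\tau\to-\infty$ only negative eigenvalues are automatically killed, the constant and linear modes are growing, and the quadratic Hermite mode is neutral, so the nonlinear Merle--Zaag machinery is not optional. You say as much yourself, flagging both the $L^2_\rho$-versus-Morrey mismatch and the nonlinear coupling as ``delicate.'' The paper actually avoids your Morrey-exclusion step entirely: once $w$ is shown bounded, positive and convergent to $\kappa$ as $s\to-\infty$, Proposition \ref{calssifyingboundednonegativesolution} (which uses the energy characterization $E(w_\infty)>E(\kappa)$ of \cite{Wang-Wei-Wu} together with the Merle--Zaag connecting-orbit analysis) directly classifies $w$ as either $\kappa$ or the explicit heteroclinic $\kappa(1+e^{s-s_0})^{-1/(p-1)}$, both of which are $y$-independent. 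So the linear Hermite modes never need to be excluded by a separate Morrey test. As it stands, your proposal identifies the right linearization but misses the rigidity/uniqueness step and the regularity bootstrap, and the places where it is vague are exactly the hard parts.
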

 \begin{rmk}
\begin{enumerate}
    \item The same result holds for $-\kappa(-t)^{-\frac{1}{p-1}}$.
    \item The function sequence in \eqref{main assumption} is a blow-down sequence. Its strong  convergence in $L_{loc}^{p+1}(\mathbb{R}^{n}\times(-\infty, 0))$ can be established by using the fact that $m$ is not an integer, see Proposition \ref{prop blow-down limit} below.
    \item In \cite{Polacik-Quittner2021-Liouville}, Pol\'a\v cik and Quittner also proved a Liouville theorem for ancient and entire solutions to supercritical Fujita equation, which however is concerned with the characterization of steady states and is restricted to   radially symmetric solutions. 
    \end{enumerate}
 \end{rmk}
 As a special case of Theorem \ref{maintheorem1}, we can prove the following classification result for low entropy ancient solutions.
 \begin{coro}[Low-entropy ancient solutions]\label{Low-entropy ancient solutions}
Assume $\frac{n+2}{n-2}<p<\frac{n+1}{n-3}$ and $u\in\mathcal{F}_{M}$ is a nonnegative ancient solution of \eqref{eqn}. If
\[\sup_{t\in(-\infty, 0)}\lambda(u(\cdot, t))\leq \left(\frac{1}{2}-\frac{1}{p+1}\right)\left(\frac{1}{p-1}\right)^{\frac{p+1}{p-1}},\]
where $\lambda(u(\cdot, t))$ is the entropy defined in \cite{Wang-Wei-Wu}, then either $u=0$ or there exists a constant $T\geq 0$ such that
 \[u(x, t)=\kappa(T-t)^{-\frac{1}{p-1}},\quad\text{in}~\mathbb{R}^{n}\times(-\infty, 0).\]
 \end{coro}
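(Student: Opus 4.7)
The plan is to reduce the corollary to the Liouville theorem (Theorem \ref{maintheorem1}) by identifying the blow-down limit of $u$. The key numerical observation is that the threshold $\bigl(\tfrac12-\tfrac1{p+1}\bigr)\kappa^{p+1}$ is exactly the Giga--Kohn entropy of the constant self-similar profile $\pm\kappa$ (while the profile $0$ has entropy zero), and under the narrow window $\tfrac{n+2}{n-2}<p<\tfrac{n+1}{n-3}$ a classification of backward self-similar solutions (available from \cite{Wang-Wei-Wu}) should tell us that every nonconstant $W\in\mathcal{S}_M$ has entropy strictly above this threshold. Thus only $W\in\{0,\pm\kappa\}$ are compatible with the entropy hypothesis at the level of the rescaled limit.

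First I would fix an arbitrary sequence $\lambda_k\to+\infty$ and form the scaling-invariant family $u_k(x,t):=\lambda_k^{2/(p-1)}u(\lambda_k x,\lambda_k^2 t)\in\mathcal{F}_M$. By the existence of blowing-down limits (which uses crucially the non-integer condition on $m$ recorded in the remark following Theorem \ref{maintheorem1}), along a subsequence $u_k$ converges strongly in $L^{p+1}_{loc}(\R^n\times(-\infty,0))$ to a backward self-similar solution $U(x,t)=(-t)^{-1/(p-1)}W(x/\sqrt{-t})$ with $W\in\mathcal{S}_M$. Since the entropy $\lambda(\cdot)$ is scaling invariant and continuous along strong $L^{p+1}_{loc}$ convergence, the uniform bound on $\lambda(u(\cdot,t))$ is inherited by each $u_k$ and passes to $U$; by self-similarity this reads $\lambda(W)\le\bigl(\tfrac12-\tfrac1{p+1}\bigr)\kappa^{p+1}$, so the classification forces $W\in\{0,\pm\kappa\}$.

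If $W=\pm\kappa$ along some subsequence, the blow-down limit of $u$ is exactly $\pm\kappa(-t)^{-1/(p-1)}$, and Theorem \ref{maintheorem1} directly yields $u(x,t)=\pm\kappa(T-t)^{-1/(p-1)}$ for some $T\ge 0$. Otherwise every subsequential blow-down limit is $W\equiv 0$; since the same argument can be performed at every base point $(x_0,t_0)$, the Giga--Kohn density vanishes in the outer limit at every such point, and by Giga--Kohn monotonicity it must then vanish at every scale, whence the rigidity case of the monotonicity formula combined with the Morrey control of $\partial_t u$ inherent in $\mathcal{F}_M$ forces $u\equiv 0$. The main obstacle I anticipate is twofold: (i) establishing the entropy-gap classification in the precise exponent window $\tfrac{n+2}{n-2}<p<\tfrac{n+1}{n-3}$, which presumably rests on a Pohozaev-type computation for \eqref{SC1} ruling out nonconstant profiles of small entropy and is the reason for the exponent restriction; and (ii) ensuring that the case $W\equiv 0$ can be handled uniformly across base points, so as to rule out pathological ancient solutions whose blow-down vanishes at every point yet which are not themselves zero.
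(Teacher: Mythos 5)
Your proposal is essentially the paper's proof: blow down along an arbitrary sequence $\lambda_k\to\infty$ (Proposition~\ref{prop blowing down limit}), observe that the entropy bound forces the self-similar profile $w_\infty$ to have $E(w_\infty)\le E(\kappa)$, invoke the classification from \cite[Theorem~1.7]{Wang-Wei-Wu} to conclude $w_\infty\in\{0,\pm\kappa\}$, and finish via Theorem~\ref{maintheorem1} in the $\pm\kappa$ case. Your concern~(ii) about the $w_\infty=0$ case is more involved than necessary: one base point suffices, since monotonicity and nonnegativity of $E(s;0,0,u)$ (Corollary~\ref{coro 3.7}) give $E\equiv 0$, the rigidity case of the monotonicity formula then forces $u$ to be exactly backward self-similar around $(0,0)$ with profile $w_\infty=0$, hence $u\equiv 0$ — no sweep over base points or separate appeal to the Morrey control on $\partial_t u$ is needed.
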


\subsection{Applications: End time singular set}\label{subsection application}
   For finite time blow-up solution of \eqref{eqn},  one question that has received great attention is  the structure and the regularity of the blow-up set $\text{Sing}(u)$, see \cite{Blatt-Struwe, Chou-DU-Zheng2007, Dushizhong2019,Giga-Kohn1989, Merle-Matano2004,  Velazquez1993-2, Zaag-2002, Zaag-2006}. 
  
  \begin{defi}[Blow-up sequence and tangent function]\label{definition of blow-up sequence}
Given a base point $(x_0,t_0)$ and a constant $\lambda>0$, denote
\[u_{x_0,t_0,\lambda}(x,t):=\lambda^{\frac{2}{p-1}}u(x_0+\lambda x, t_0+\lambda^2 t).\]
If there exists a sequence  $\lambda_k\to0^+$ and a function $w_{0}\in\mathcal{S}_{M}$ such that
  \[u_{x_0,t_0,\lambda_k}\to (-t)^{-\frac{1}{p-1}}w_{0}\left(\frac{x}{\sqrt{-t}}\right) \quad\text{in}~ L^{p+1}_{loc}(\R^n\times(-\infty,0)),\]
  then $w_0$ is called a tangent function of $u$ at $(x_0,t_0)$.
  
The set of tangent functions at $(x_0,t_0)$ is denoted by  $\mathcal{T}(x_0,t_0;u)$. 
   \end{defi}
Blow-down sequence can be defined in the same way by using sequences  $\lambda_k\to+\infty$. The convergence of blow-up sequences and the self-similarity of their limits can be established by compactness arguments and monotonocity formula, see Section \ref{sec preliminary}.

A more familiar form used since the work of Giga-Kohn \cite{Giga-Kohn1985} is to reformulate the equation \eqref{eqn} in self-similar coordinates.
\begin{defi}[Self-similar transform]
 Given a suitable weak ancient solution $u$ of \eqref{eqn}, its self-similar transform with respect to  a base point $(x_0,t_0)$ is   
\begin{equation}\label{definition of self-similar transform}
w(y, s)=(t_0-t)^{\frac{1}{p-1}}u(x, t),\quad 
y=(t_0-t)^{-\frac{1}{2}}(x-x_0),\quad s=-\log(t_0-t).
\end{equation}
\end{defi}
The function $w$ obtained by taking self-similar transform is a solution of
\begin{equation}\label{self-similar eqn}\tag{RF}
\partial_{s}w=\Delta w-\frac{y}{2}\cdot\nabla w-\frac{1}{p-1}w+|w|^{p-1}w,\quad\text{in}~\mathbb{R}^{n}\times (-\infty, +\infty).
\end{equation}
Notice that \eqref{SC1} is just the steady version of \eqref{self-similar eqn}. The convergence of blow-up sequence to tangent functions in Definition \ref{definition of blow-up sequence} is equivalent to the convergence of $w(\cdot, s)$ as $s\to+\infty$, and the convergence of blow-down sequence corresponds to the convergence as $s\to-\infty$.  Just as in the subcritical case considered in Giga-Kohn \cite{Giga-Kohn1985}, this convergence is
usually deduced from  the gradient flow structure of \eqref{self-similar eqn}:
For any $w\in H^1_w(\R^n)\cap L^{p+1}_w(\R^n)$, let
\begin{equation}\label{energy}\tag{E}
E(w)= \int_{\mathbb{R}^{n}} \left[\frac{1}{2} |\nabla w|^2  +\frac{1}{2(p-1)} w
^2  - \frac{1}{p+1} |w|^{p+1} \right]\rho dy
\end{equation}
be the energy of $w$. Then \eqref{self-similar eqn} is the gradient flow of this energy functional, while \eqref{SC1} is its Euler-Lagrange equation.

   Theorem \ref{maintheorem1} allows us to prove the following result.
\begin{thm}[Uniqueness and stability of ODE blow-ups]\label{maintheorem2}
% Assume $n\geq 3, p>(n+2)/(n-2)$ and $m$ is not an integer.  
Let $u$ be a suitable weak solution of \eqref{eqn} in $Q_{1}^{-}$, satisfying the Morrey estimate \eqref{Morreyeatimates} for all backward parabolic cylinders of the form $Q_r^-(x,t-r^2)\subset Q_1^-$, where $(x,t)\in Q_1^-$.
 
 If   $\kappa\in \mathcal{T}(0,0; u)$, then the followings hold.
\begin{enumerate}[label=(\Alph*)]
   \item\label{item:uniqueness} Uniqueness of blow-up limit: $\mathcal{T}(0,0; u)=\{\kappa\}$;
    \item\label{item:stability} Stability of ODE blow-up: there exists $\delta>0$ such that for any $x\in B_{\delta}(0)\cap \text{Sing}(u)$,  $\mathcal{T}(x,0;u)=\{\kappa\}$;
     \item\label{item:positivity} Positivity preserving: $u>0$ in $Q_\delta^-(0,0)$;
    \item\label{item:Type I} Type I blow-up rate:  $u$ is smooth in $Q_\delta^-(0, 0)$ and there exists a constant $C>0$ such that
    \begin{equation}\label{Type I blow-up rate}
    u(x,t)\leq C(-t)^{-\frac{1}{p-1}}, \quad \forall (x,t)\in Q_\delta^-(0, 0).
    \end{equation}
\end{enumerate}
\end{thm}
\begin{rmk}
\begin{enumerate}
    \item 

Item \ref{item:uniqueness} in Theorem \ref{maintheorem2} justifies rigorously the discussions in \cite{Merle-Matano2004}.  (See the 
discussions below Remark 2.4 therein.) Different from \cite{Merle-Matano2004}, we do not need  any assumption on the symmetry of solutions.

\item The  uniqueness and stability property of ODE blow-ups can be understood in the following way: if a solution is close to the ODE solution at one scale, then it is even closer to the ODE solution on smaller scales, and this not only holds with respect to the original base point, but also with respect to all nearby singular points. 

This is very different from existing reulsts on the stability of Type I blow-ups in the literature, e.g. the one in Collot-Raphael-Szeftel \cite{Collot-Raphael-Szeftel2019}. Those are mostly on the functional analysis side, where the equation is viewed as a dynamical system in some function spaces, and  then a kind of dynamical stability of steady states (i.e. solutions of \eqref{SC1}) is established. In contrast, in the above stability result, we do not impose any perturbation on initial values. (In fact, there is no initial value because we are only concerned with solutions in backward parabolic cylinders.)

This stability property is closer to the result for  generic singularities in mean curvature flows, see Colding-Minicozzi \cite{Colding-Minicozzi2016}.  Just as there, the above theorem says that ODE solutions are generic among all blow-ups in \eqref{eqn}. Furthermore, the proof of our results also rely on some ideas inspired by Colding-Minicoozi \cite{Colding-Minicozzi2012} and Colding-Ilmanen-Minicozzi \cite{Colding-I-M2015}.

\item
The stability property also occurs in  mean curvature flows (see \cite{Felix-Natasa, Sun-Xue}). In \cite[Theorem 1.7]{Felix-Natasa},  Schulze and Sesum proved the ‌stability of neckpinch singularities. But it seems that the mechanism behind them are very different, see Subsection 2.4 for more discussions.
\end{enumerate}
\end{rmk}

By the above theorem, the following subsets of $\text{Sing}(u)$ are well defined,
\[ \mathcal{R}^\pm:=\{x: \mathcal{T}(x,0; u)=\{\pm\kappa\}\}.\]
Our last main result is about the structure of $\mathcal{R}^\pm$ and its complement $\text{Sing}(u)\setminus (\mathcal{R}^+\cup\mathcal{R}^-)$. 
\begin{thm}\label{maintheorem3}
Let $u$ be a solution of the equation \eqref{eqn} satisfying the assumptions in Theorem \ref{maintheorem2}. If $\text{Sing}(u)\cap \{t=0\}\neq\emptyset$ and 
$u$ is not an ODE solution, 
  then 
\begin{enumerate}[label=(\Alph*)]
  \item\label{item:openness} both $\mathcal{R}^+$ and $\mathcal{R}^-$ are relatively open in $\{x:(x,0)\in
  \text{Sing}(u)\}$;
  \item\label{item:rectifibility} $\mathcal{R}^\pm$ are  $(n-1)$-rectifiable sets; 
  %with\[\mathcal{H}^{n-1}(\mathcal{R}^\pm)<\infty,\]where $\mathcal{H}^{n-1}$ is the standard $(n-1)$ dimensional Hausdorff measure;
  \item\label{item:Hausdorff} the Hausdorff dimension of $\{x:(x,0)\in
  \text{Sing}(u)\}\setminus (\mathcal{R}^+\cup\mathcal{R}^-)$ is at most $n-[m]-1$, where $[m]$ is the integer part of $m$.
   \end{enumerate}
\end{thm}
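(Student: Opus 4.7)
\medskip

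\textbf{Proof proposal for Theorem \ref{maintheorem3}.} The plan is to derive \ref{item:openness} directly from the stability statement in Theorem \ref{maintheorem2}, to establish \ref{item:rectifibility} via a Naber--Valtorta type quantitative stratification driven by the uniqueness of tangent functions, and to prove \ref{item:Hausdorff} by an Almgren--Federer dimension reduction that exploits the non-integrality of $m$. For \ref{item:openness}, let $x\in\mathcal{R}^+$; translating the base point to $(x,0)$ we have $\kappa\in\mathcal{T}(x,0;u)$, so by item \ref{item:stability} of Theorem \ref{maintheorem2} every end-time singular point $y$ in a neighborhood of $x$ satisfies $\mathcal{T}(y,0;u)=\{\kappa\}$, i.e.\ $y\in\mathcal{R}^+$; the argument for $\mathcal{R}^-$ is identical.

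For \ref{item:rectifibility}, around every $x_0\in\mathcal{R}^\pm$ item \ref{item:Type I} of Theorem \ref{maintheorem2} supplies the Type I bound \eqref{Type I blow up rate} and smoothness in a backward cylinder, so a uniform Morrey constant is available. Since $\pm\kappa$ are invariant under all spatial translations, $\mathcal{R}^\pm$ sits in the top parabolic stratum of the end-time singular set. I would first upgrade the qualitative uniqueness of tangent functions furnished by Theorem \ref{maintheorem1} to a scale-invariant quantitative closeness estimate via a contradiction--compactness argument in $\mathcal{F}_M$: if the self-similar transform $w(\cdot,s)$ lies within $\varepsilon$ of $\kappa$ in $L^{p+1}_{loc}$ on one scale, it must lie within $\varepsilon/2$ on every smaller scale, provided $\varepsilon$ is universally small. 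Combining this closeness with Giga--Kohn's monotonicity and an $L^2$ best-plane approximation of the singular set at each scale, I would then run the Reifenberg/tree-construction scheme of Naber--Valtorta (in its parabolic incarnation, as developed for harmonic map heat flow and mean curvature flow) to conclude $(n-1)$-rectifiability together with $\mathcal{H}^{n-1}(\mathcal{R}^\pm\cap K)<\infty$ on every compact set $K$.

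For \ref{item:Hausdorff}, fix $(x,0)\in\text{Sing}(u)\cap\{t=0\}\setminus(\mathcal{R}^+\cup\mathcal{R}^-)$ and let $w_0\in\mathcal{T}(x,0;u)\subset\mathcal{S}_M$ be any tangent function. By the $\varepsilon$-regularity developed in \cite{Wang-Wei-Wu} the point would be regular if $w_0\equiv 0$, so $w_0\not\equiv 0$; by the definition of $\mathcal{R}^\pm$ one has $w_0\notin\{\pm\kappa\}$; hence $w_0$ is non-constant. Let $V\subseteq\mathbb{R}^n$ be the maximal linear subspace of translation invariances of $w_0$ and put $k=\dim V$. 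Writing $y=(v,v^\perp)$ with $v\in V$, the reduced function $\tilde w_0(v^\perp):=w_0(0,v^\perp)$ solves \eqref{SC1} on $V^\perp\cong\mathbb{R}^{n-k}$ and inherits the Morrey estimate with the same exponent $m$; since $\tilde w_0$ is non-constant the reduced problem must be supercritical, i.e.\ $n-k>m$, and the hypothesis $m\notin\mathbb{Z}$ then forces
\begin{equation*}
k\leq n-[m]-1.
\end{equation*}
Thus every singular point in the complement lies in the stratum whose tangent functions all have spine dimension at most $n-[m]-1$, and the standard parabolic Almgren--Federer dimension reduction yields the claimed bound. The main obstacle is \ref{item:rectifibility}: securing the scale-invariant quantitative closeness needed to drive the Naber--Valtorta covering argument. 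A full \L{}ojasiewicz inequality at $\kappa$ for the Giga--Kohn energy \eqref{energy} would suffice but is delicate because $\kappa$ carries a large unstable manifold in \eqref{self-similar eqn}; the backup plan is the direct contradiction--compactness argument sketched above, leveraging the rigidity supplied by Theorem \ref{maintheorem1}.
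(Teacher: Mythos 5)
Your treatment of item~\ref{item:openness} is correct and matches the paper: you translate the base point, note that $\kappa\in\mathcal{T}(x,0;u)$, and invoke Theorem~\ref{maintheorem2}~\ref{item:stability}. Your treatment of item~\ref{item:Hausdorff} is also essentially the paper's argument in contrapositive form: you show that at any end-time singular point outside $\mathcal{R}^+\cup\mathcal{R}^-$ every tangent function has spine dimension at most $n-[m]-1$ (the paper instead defines the stratification first and shows that a point outside the stratum $\mathcal{S}_{n-[m]-1}$ has a tangent function which, after Federer reduction and Giga--Kohn's Liouville theorem in the reduced subcritical dimension $[m]<m$, must equal $\pm\kappa$, hence lies in $\mathcal{R}^\pm$). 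Both routes then appeal to the parabolic stratification/dimension-reduction theorem (White) to get the Hausdorff bound. Your inequality chain $n-k>m\Rightarrow k\le n-[m]-1$ (using $m\notin\mathbb{Z}$) is exactly what is needed and is correct.

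The genuine gap is item~\ref{item:rectifibility}, and you have flagged it yourself. The paper does \emph{not} use a Naber--Valtorta / Reifenberg covering scheme. Instead, once the Type~I bound of Theorem~\ref{maintheorem2}~\ref{item:Type I} is in hand, it works with the globally extended solution $\widetilde{u}=\eta u+(1-\eta)u_0$, passes to its self-similar transform $\widetilde{w}$ at a singular base point, sets $\psi=\widetilde{w}-\kappa$, and invokes Vel\'azquez's refined asymptotic expansion theorem (Hermite-mode dichotomy, either the $\frac{C_p}{\tau}\sum H_2(y_k)$ behaviour or the $e^{(1-m/2)\tau}\sum C_\alpha H_\alpha$ behaviour), which extends to the supercritical case precisely because of the local Type~I bound. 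Rectifiability and the $\mathcal{H}^{n-1}$-finiteness are then obtained by repeating Vel\'azquez's argument in \cite{Velazquez1993-2}. Your proposed contradiction--compactness ``uniform decay rate'' step does give uniqueness of tangent function at a single base point, but it does \emph{not} by itself furnish the scale-and-location uniform $L^2$ flatness control (best-plane estimates with summable excess) that a Naber--Valtorta construction requires; that would require either a \L{}ojasiewicz--Simon inequality at $\kappa$ (which, as you note, is problematic because $\kappa$ has a large unstable manifold in \eqref{self-similar eqn}) or the more precise Vel\'azquez-type expansion. So as written, item~\ref{item:rectifibility} is unproved; the concrete missing ingredient is the refined (second-order) asymptotic description of $\widetilde w-\kappa$, which the paper obtains from Vel\'azquez's theorem.
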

\begin{rmk}[End point singularity]
In the above theorem, the Hausdorff dimension estimate in \ref{item:Hausdorff} is the same with the one in \cite[Theorem 7.1]{Wang-Wei2021}. However, that theorem is 
concerned with singular sets in the full parabolic cylinder $Q_1:=B_1\times(-1,1)$, while  the above theorem is about the singular set at the end time.  This leads to the difference related to the two additional parts $\mathcal{R}^\pm$ in $\{x:(x,0)\in
  \text{Sing}(u)\}$. The above theorem says that, these $(n-1)$ dimensional parts arise just because the tangent function is an ODE solution. In other words, if a solution $u$ can be continued further in the time, then the tangent function at a singular point cannot be ODE solutions.\footnote{This is because ODE solutions do not satisfy the Morrey space estimate \eqref{Morreyeatimates} for any base point $(x_0,t_0)$ with $t_0>0$. } Once this obstacle does not exist, then Federer dimension reduction principle can be applied to lower the Hausdorff dimension estimate down to $n-[m]-1$.

In view of this fact, it  seems natural to conjecture that a solution to \eqref{eqn} in $Q_1^-$ can be continued beyond time $0$ if and only if these ODE blow-ups does not appear at any singular point on $B_1\times\{0\}$.
\end{rmk}

Finally, we present three more applications of Theorem \ref{maintheorem2} to ancient solutions or backward self-similar solutions.
\begin{coro}\label{coro second Liouville}
For any  $u\in\mathcal{F}_{M}$, if
$u$ blows up on the entire $\R^{n}\times \{0\}$, then $u(x,t)\equiv \kappa(-t)^{-\frac{1}{p-1}}$.
\end{coro}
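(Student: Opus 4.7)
The plan is to argue by contradiction via the Hausdorff dimension estimate of Theorem \ref{maintheorem3}: if $u$ were not an ODE solution, its end-time singular slice could not cover an open subset of $\R^n$, whereas the blow-up hypothesis forces exactly that. The main obstacle, therefore, is not in constructing the proof but in verifying that Theorem \ref{maintheorem3} is applicable at every scale (on arbitrary $Q_R^-$, not just $Q_1^-$) to an ancient solution in $\mathcal{F}_M$.

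First I would reduce to the local setting by rescaling. Given $u\in\mathcal{F}_M$ blowing up on all of $\R^n\times\{0\}$, for any $R>0$ set $u_R(x,t):=R^{\frac{2}{p-1}}u(Rx,R^2 t)$. By the scaling invariance of \eqref{Morreyeatimates} noted in the remark after the definition of $\mathcal{F}_M$, $u_R\in\mathcal{F}_M$, and the scaling invariance of \eqref{eqn} implies that $u_R$ still blows up at every point of $\R^n\times\{0\}$. So $u_R|_{Q_1^-}$ is a suitable weak solution satisfying the Morrey estimate on every admissible cylinder in $Q_1^-$, and the singular slice $\{x:(x,0)\in\text{Sing}(u_R)\}$ contains all of $B_1$; in particular the hypotheses of Theorem \ref{maintheorem3} are met (with $B_1\times\{0\}$ in the role of the nonempty end-time singular set).

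Suppose for contradiction that $u$, and hence $u_R$, is not an ODE solution. Theorem \ref{maintheorem3} then decomposes the singular slice as $\mathcal{R}^+\cup\mathcal{R}^-\cup Z$ with $\mathcal{H}^{n-1}(\mathcal{R}^\pm)<\infty$ and $Z$ of Hausdorff dimension at most $n-[m]-1$. Since supercriticality $p>(n+2)/(n-2)$ gives
\[m=2\frac{p+1}{p-1}=2+\frac{4}{p-1}>2,\]
and the standing hypothesis $m\notin\mathbb{Z}$ forces $[m]\ge 2$, the Hausdorff dimension of $Z$ is at most $n-3$. Consequently $\mathcal{H}^n(\mathcal{R}^\pm\cap B_1)=0$ and $\mathcal{H}^n(Z\cap B_1)=0$, so $\mathcal{H}^n(B_1)=0$, which is absurd. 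Hence $u$ must be an ODE solution $u(x,t)=\pm\kappa(T-t)^{-\frac{1}{p-1}}$, and since $u$ is ancient and blows up at $t=0$ necessarily $T=0$. Invoking the remark after Theorem \ref{maintheorem1} to handle the $-\kappa$ sign, we conclude $u\equiv \kappa(-t)^{-\frac{1}{p-1}}$.
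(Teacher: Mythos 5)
Your proof is correct and it is genuinely simpler and more direct than the paper's. The paper argues by induction on the dimension: it first passes to a blow-down limit $u_1$ (a backward self-similar solution), shows $u_1$ also blows up on all of $\R^n\times\{0\}$, applies Federer dimension reduction to blow $u_1$ up at a nonzero point $x_1$ and obtain a self-similar, translation-invariant solution $u_2$ in one fewer dimension, invokes the inductive hypothesis (with Giga-Kohn's Liouville theorem plus the Merle-Zaag theorem furnishing the subcritical base case) to conclude $u_2$ is an ODE solution, deduces from Theorem \ref{maintheorem2} that every nonzero $x$ lies in $\mathcal{R}^\pm$ for $u_1$, then obtains a contradiction with the dimension estimate of Theorem \ref{maintheorem3} applied to $u_1$, and finally transfers back from $u_1$ to $u$ by Theorem \ref{maintheorem1}. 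Your argument bypasses the entire induction and the extra invocation of Theorem \ref{maintheorem1}: you simply observe that the end-time singular slice of $u$ (restricted to $Q_1^-$) would have to be a full-measure subset of $B_1$, yet Theorem \ref{maintheorem3} forces it into a set of Hausdorff dimension at most $n-1$; this is a contradiction purely by $\mathcal{H}^n$-measure counting. Both proofs ultimately rest on Theorem \ref{maintheorem3}, but yours uses it in a cleaner way.

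Two minor remarks. First, the rescaling preamble is unnecessary: since $u\in\mathcal{F}_M$, its restriction to $Q_1^-$ already satisfies the hypotheses of Theorems \ref{maintheorem2} and \ref{maintheorem3}, and the end-time singular slice already contains all of $B_1$, so you can take $R=1$. Second, the appeal at the end to ``the remark after Theorem \ref{maintheorem1}'' to rule out the $-\kappa$ case does not actually do so; that remark only says the $-\kappa$ version of the Liouville theorem also holds, not that the sign is determined. Your argument, like the paper's own proof, really concludes $u\equiv\pm\kappa(-t)^{-\frac{1}{p-1}}$, with the sign determined by the sign of $u$ itself; the statement of Corollary \ref{coro second Liouville} is implicitly up to sign.
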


\begin{coro}\label{coro nearly subcritical case}
If $\frac{n+2}{n-2}<p<\frac{n+1}{n-3}$ and $w\in\mathcal{S}_{M}$ is a  smooth solution of \eqref{SC1}, then $w$ is bounded on $\R^n$.
\end{coro}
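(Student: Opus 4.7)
The argument is by contradiction: suppose $w$ is unbounded. Since $w$ is smooth (hence bounded on compacts), there exist $y_k\in\R^n$ with $|y_k|\to\infty$ and $|w(y_k)|\to\infty$, and after passing to a subsequence, $\hat y_k:=y_k/|y_k|\to x_\infty\in S^{n-1}$. Work with the associated ancient solution
\[u(x,t):=(-t)^{-\frac{1}{p-1}}w\!\left(\frac{x}{\sqrt{-t}}\right)\in\mathcal{F}_M.\]
Taking $t_k=-|y_k|^{-2}$ and $x_k=\hat y_k$, one has $x_k/\sqrt{-t_k}=y_k$, so
\[|u(x_k,t_k)|=|y_k|^{\frac{2}{p-1}}|w(y_k)|\to+\infty \quad\text{while}\quad (x_k,t_k)\to(x_\infty,0),\]
and hence $(x_\infty,0)\in\mathrm{Sing}(u)$. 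The parabolic dilation symmetry $u(\lambda x,\lambda^2 t)=\lambda^{\frac{2}{p-1}}u(x,t)$ then promotes the whole open ray $R:=\{\lambda x_\infty:\lambda>0\}$ into $\mathrm{Sing}(u)\cap\{t=0\}$.

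Next I would apply Theorem~\ref{maintheorem3} to $u$ on $Q_1^-$: the Morrey bound on every sub-cylinder comes from $u\in\mathcal{F}_M$, the end-time singular set contains $R$ and is therefore nonempty, and $u$ is not an ODE solution since the unbounded $w$ is non-constant. The near-subcritical assumption $p<\tfrac{n+1}{n-3}$ together with $p>\tfrac{n+2}{n-2}$ and the standing hypothesis $m\notin\mathbb{Z}$ pins $m$ to the open interval $(n-1,n)$, so $[m]=n-1$ and $n-[m]-1=0$. Part~(C) of Theorem~\ref{maintheorem3} therefore bounds the Hausdorff dimension of the exceptional set $\mathrm{Sing}(u)\cap\{t=0\}\setminus(\mathcal{R}^+\cup\mathcal{R}^-)$ by $0$. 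Since the $1$-dimensional ray $R$ cannot be contained in such a set, I can select $\lambda_0>0$ with $\lambda_0 x_\infty\in\mathcal{R}^+\cup\mathcal{R}^-$.

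Finally, Theorem~\ref{maintheorem2}(D) applied at $(\lambda_0 x_\infty,0)$ (together with its $-\kappa$ counterpart from the remark following Theorem~\ref{maintheorem1}) furnishes $\delta,C>0$ with
\[|u(x,t)|\le C(-t)^{-\frac{1}{p-1}}\quad\text{on}\ \ Q_\delta^-(\lambda_0 x_\infty,0).\]
Setting $t_k':=-\lambda_0^2|y_k|^{-2}$ and $x_k':=\lambda_0\hat y_k$ produces, for $k$ large, a sequence in $Q_\delta^-(\lambda_0 x_\infty,0)$ satisfying $x_k'/\sqrt{-t_k'}=y_k$; the Type~I estimate then rearranges to $|w(y_k)|\le C$, contradicting $|w(y_k)|\to\infty$. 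The main obstacle I foresee is not the final contradiction but the preceding geometric step, namely sliding the singularity from a far-away sequence $y_k$ onto an on-ray point $\lambda_0 x_\infty$ at which an ODE tangent is \emph{forced} to exist. This is precisely where the near-subcritical hypothesis $p<\tfrac{n+1}{n-3}$ enters: it pushes the ``non-ODE'' part of the end-time singular stratum strictly below the dimension of the ray, so the ODE-type points must meet $R$.
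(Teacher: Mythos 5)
Your proof is correct, and it takes a genuinely different route from the paper's.

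The paper's own argument is direct and more elementary: for each point $x$ on the unit sphere $\mathbb{S}^{n-1}$, it checks whether $(x,0)$ is a singular point of $u(x,t)=(-t)^{-\frac{1}{p-1}}w(x/\sqrt{-t})$; if so, Federer dimension reduction (using the scale-invariance of $u$ about the origin) produces a tangent function translation-invariant in the radial direction, hence a solution of \eqref{SC1} on $\mathbb{R}^{n-1}$, where $p$ is subcritical because $p<\frac{n+1}{n-3}$. Giga-Kohn's Liouville theorem then classifies this tangent function as $\pm\kappa$, and Theorem~\ref{maintheorem2} gives a local Type~I bound near $(x,0)$; if $(x,0)$ is regular, the bound is trivial. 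A finite covering of the compact sphere yields a uniform Type~I bound on an annulus, from which the boundedness of $w$ at infinity is read off. You instead argue by contradiction: assuming $w$ unbounded, you produce one singular point $(x_\infty,0)$, use the scale-invariance of $u$ to promote the whole ray through $x_\infty$ into $\mathrm{Sing}(u)\cap\{t=0\}$, then invoke the Hausdorff-dimension bound of Theorem~\ref{maintheorem3}\ref{item:Hausdorff} — noting correctly that $m\in(n-1,n)$ forces $n-[m]-1=0$ — to find an ODE-tangent point on the ray, and finally apply Theorem~\ref{maintheorem2}\ref{item:Type I} there to contradict $|w(y_k)|\to\infty$. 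Both proofs ultimately rest on Theorem~\ref{maintheorem2} and on the fact that the near-subcritical hypothesis makes Giga-Kohn's Liouville theorem available after one dimensional reduction; the paper applies the reduction explicitly at each sphere point, while you let Theorem~\ref{maintheorem3}\ref{item:Hausdorff} carry that load. The paper's covering argument has the advantage of giving a uniform Type~I bound on a full annulus without invoking the deeper rectifiability/stratification machinery of Theorem~\ref{maintheorem3}, and it does not need the contradiction setup; your version is shorter once Theorem~\ref{maintheorem3} is in hand. One minor typo: the dilation identity should read $u(\lambda x,\lambda^2 t)=\lambda^{-\frac{2}{p-1}}u(x,t)$ (negative exponent), but this does not affect the argument.
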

%Under the assumption that $w$ is radially symmetric, the above corollary can be improved further.
\begin{coro}\label{coro radially symmetric cae}
%Assume $n\geq 3, p>(n+2)/(n-2)$ and $m$ is not an integer. 
If $w$ is a smooth radially symmetric solution of \eqref{SC1}, then either $w$ is a constant solution or there is a positive constant $C$ such that
\begin{equation}\label{decay estimate for radially symmetric soluitons}
    |w(y)|\leq C(1+|y|)^{-\frac{2}{p-1}},\quad\text{in}~\mathbb{R}^{n}.
\end{equation}
\end{coro}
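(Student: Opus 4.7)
The plan is to treat \eqref{SC1} as a radial ODE in the variable $r = |y|$. For smooth radial $w = w(r)$, the equation reads
\[
-w'' - \tfrac{n-1}{r}w' + \tfrac{r}{2}w' + \tfrac{w}{p-1} = |w|^{p-1}w, \qquad r > 0,
\]
with $w \in C^\infty([0,\infty))$ and $w'(0)=0$. I introduce
\[
F(w) := \tfrac{w^2}{2(p-1)} - \tfrac{|w|^{p+1}}{p+1}, \qquad \mathcal{E}(r) := \tfrac{1}{2}(w'(r))^2 - F(w(r)),
\]
and note that the critical points of $F$ are $0$ and $\pm\kappa$, with $\pm\kappa$ the \emph{global} maxima ($F(\kappa)=\tfrac{\kappa^2}{2(p+1)}>0$). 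A direct differentiation gives $\mathcal{E}'(r) = \bigl(\tfrac{r}{2} - \tfrac{n-1}{r}\bigr)(w'(r))^2$, so $\mathcal{E}$ is non-decreasing on $[r_*,\infty)$ with $r_* := \sqrt{2(n-1)}$. A short ODE blow-up argument then shows that $\mathcal{E}$ (and hence $w$) is bounded on $[0,\infty)$: if $\mathcal{E}(r)\to\infty$, then $|w'|\to\infty$, which forces $w$ eventually monotone with $|w|\to\infty$; but then the supercritical term $|w|^{p-1}w$ in the ODE drives $w''$ to reverse the sign of $w'$, a contradiction.

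Once $w$ is bounded, $\mathcal{E}(r)\to L$ for some $L\in\mathbb{R}$ and integrating the monotonicity identity gives $\int_{r_*}^{\infty}\bigl(\tfrac{r}{2}-\tfrac{n-1}{r}\bigr)(w'(r))^2\,dr < \infty$. A standard $\omega$-limit argument (using the discreteness of $\{F'=0\}$) then forces $w'(r)\to 0$ and $w(r)\to a$ for some $a\in\{0,\pm\kappa\}$. Suppose for contradiction $a = \kappa$ (the case $-\kappa$ is symmetric). Then $L = -F(\kappa)$, and monotonicity of $\mathcal{E}$ evaluated at $r_*$ gives
\[
\tfrac{(w'(r_*))^2}{2} - F(w(r_*)) = \mathcal{E}(r_*) \leq L = -F(\kappa),
\]
equivalently $\tfrac{(w'(r_*))^2}{2} + F(\kappa) \leq F(w(r_*))$. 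But $F \leq F(\kappa)$ globally on $\mathbb{R}$, so both sides must equal $F(\kappa)$, forcing $w'(r_*) = 0$ and $w(r_*)\in\{\pm\kappa\}$. Uniqueness for the (smooth at $r_*$) ODE with Cauchy data $(\pm\kappa, 0)$ now gives $w \equiv \pm\kappa$ on $(0,\infty)$, and by continuity on $\mathbb{R}^n$, contradicting non-constancy. Hence $w(r)\to 0$.

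For $r$ large with $|w(r)|\to 0$, the equation is a lower-order perturbation of the linear Hermite-type ODE $-v'' - \tfrac{n-1}{r}v' + \tfrac{r}{2}v' + \tfrac{v}{p-1} = 0$. Substituting $v = r^{-\gamma}$ and balancing the leading $r^{-\gamma}$ terms in $\tfrac{r}{2}v' + \tfrac{v}{p-1}=0$ forces $\gamma = 2/(p-1)$, producing the algebraically decaying fundamental solution $v_1(r)\sim r^{-2/(p-1)}$; a WKB substitution $v_2\sim r^\beta e^{r^2/4}$ gives the super-exponentially growing second mode. Since $w$ is bounded and tends to $0$, a variation-of-parameters/Grönwall argument (the nonlinearity $|w|^{p-1}w$ is $o(w)$ once $|w|\leq 1$) eliminates the $v_2$-component and yields $|w(r)|\leq Cr^{-2/(p-1)}$ for all large $r$; combined with boundedness on compact sets this gives $|w(y)|\leq C(1+|y|)^{-2/(p-1)}$ on $\mathbb{R}^n$. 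The main obstacles are the ODE blow-up analysis giving boundedness of $\mathcal{E}$ in the first paragraph and the asymptotic matching/Grönwall at infinity in the third; the conceptually sharp step is the middle paragraph, where the single inequality $\mathcal{E}(r_*)\leq -F(\kappa)$ together with the fact that $\pm\kappa$ are \emph{global} (not merely local) maxima of $F$ rigidly forces $w\equiv\pm\kappa$.
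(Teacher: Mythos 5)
Your proposal takes a genuinely different route from the paper's. The paper's proof converts $w$ into the backward self-similar ancient solution $u(x,t)=(-t)^{-1/(p-1)}w(x/\sqrt{-t})$, places it in $\mathcal{F}_M$, and observes that by radial symmetry and $(-\tfrac{2}{p-1})$-homogeneity a single end-time blow-up point $x_0\neq 0$ would force all of $\mathbb{R}^n\setminus\{0\}$ into $\mathcal{R}^+\cup\mathcal{R}^-$ (after identifying the tangent function as $\pm\kappa$), contradicting the $\mathcal{H}^{n-1}$-finiteness in Theorem~\ref{maintheorem3}. Your argument is instead a self-contained ODE/Lyapunov analysis in the radial variable, and it appeals neither to $\mathcal{F}_M$, nor to the Morrey estimate, nor to the standing restriction that $m$ is not an integer. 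Your middle paragraph is the conceptually sharp step and it is correct: $r_*=\sqrt{2(n-1)}$ is the global minimum of $\mathcal{E}$, so $\mathcal{E}(r_*)\leq\lim_{r\to\infty}\mathcal{E}=-F(\kappa)$, and since $F\leq F(\kappa)$ \emph{globally} this forces $w'(r_*)=0$ and $w(r_*)=\pm\kappa$, whence $w\equiv\pm\kappa$ by Picard--Lindel\"of at the regular point $r_*>0$. This is a clean radial shadow of Theorem~\ref{ME} (energy-minimality of $\kappa$ among nonzero bounded solutions) and it neatly excludes the connecting orbit to $\pm\kappa$.

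The outer two paragraphs, however, are sketches, and the first one has a real gap as written. In $w''=(\tfrac{r}{2}-\tfrac{n-1}{r})w'+\tfrac{w}{p-1}-w^p$, the assertion that the supercritical term ``drives $w''$ to reverse the sign of $w'$'' does not follow immediately once $|w'|\to\infty$: a large $w'$ can outweigh $w^p$, and $w''<0$ at a point does not by itself flip the sign of $w'$. One way to close this is to track the Riccati quantity $V:=w'/w^p$: once $w'>0$ and $w\to+\infty$, one has $\int^\infty V\,dr=w(R_0)^{1-p}/(p-1)<\infty$, and the constant drift $-1$ coming from $-w^p/w^p$ in the equation for $V$ forces $V$ to turn negative, a contradiction; none of this is in the proposal. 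Similarly, the claimed decay $|w|\lesssim r^{-2/(p-1)}$ in paragraph~3 requires a barrier or variation-of-parameters argument (one sanity check you omit is that the $O(r^{-2})$ correction at the indicial root $\gamma=2/(p-1)$ is strictly negative precisely because $\gamma<(n-2)/2$ in the supercritical range). Both gaps are fixable with standard ODE technology but are not trivial. As a mitigating remark, the paper's own proof asserts $u\in\mathcal{F}_M$ for an arbitrary smooth radial $w$ without justification, and your paragraph~1, once made rigorous, is exactly what supplies that.
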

Corollary \ref{coro second Liouville} can be viewed as a local version of the assertion that complete blow-up (that is, solution which does not have continuation beyond the blow-up time) is totally caused by ODE blow-ups.

\subsection{Difficulties and approaches} 

 Theorem \ref{maintheorem1} is a generalization of the following Liouville theorem of Merle-Zaag \cite[Corollary 1.6]{Merle-Zaag} to the supercritical setting. 
 \begin{thm}[Merle-Zaag \cite{Merle-Zaag}]
If $1<p<\frac{n+2}{n-2}$ and $u$ satisfies
 \[0\leq u(x, t)\leq C(-t)^{-\frac{1}{p-1}},\quad\text{in}~\mathbb{R}^{n}\times(-\infty, 0),\]
 then either $u\equiv 0$ or there exists a constant $T\geq 0$ such that
 \[u(x, t)=\kappa(T-t)^{-\frac{1}{p-1}},\quad\text{in}~\mathbb{R}^{n}\times(-\infty, 0).\] 
 \end{thm}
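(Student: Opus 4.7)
My plan is to reformulate the problem in self-similar coordinates based at $(0,0)$: set $w(y,s) := (-t)^{1/(p-1)} u(\sqrt{-t}\,y, t)$, so that the pointwise bound $0 \leq u \leq C(-t)^{-1/(p-1)}$ turns into the uniform bound $0 \leq w \leq C$ on all of $\mathbb{R}^n \times \mathbb{R}$, and $w$ is a bounded global solution of the rescaled equation \eqref{self-similar eqn} --- the gradient flow of the energy $E$ from \eqref{energy}. The conclusion to prove reformulates as: either $w \equiv 0$, or $w(y,s) = \kappa(1 + Te^s)^{-1/(p-1)}$ for some $T \geq 0$.

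Boundedness of $w$ together with parabolic regularity yields $C^\infty_{loc}$-compactness of the family $\{w(\cdot,s)\}_{s \in \mathbb{R}}$; coupled with the energy monotonicity $dE/ds = -\int |\partial_s w|^2 \rho\,dy \leq 0$, this forces the $\omega$-limit set at $s = +\infty$ and the $\alpha$-limit set at $s = -\infty$ to be non-empty, connected, and contained in the set of bounded non-negative critical points of $E$, i.e. solutions of \eqref{SC1}. Here the subcriticality $1<p<(n+2)/(n-2)$ is essential: by the Giga-Kohn classification of \cite{Giga-Kohn1985}, the only bounded non-negative solutions of \eqref{SC1} in this range are the constants $0$ and $\kappa$. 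By connectedness, each of $\alpha(w)$ and $\omega(w)$ equals $\{0\}$ or $\{\kappa\}$.

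The proof then splits by case on the pair $(\alpha(w), \omega(w))$. The diagonal cases $(\{0\},\{0\})$ and $(\{\kappa\},\{\kappa\})$ force $E$ to be constant along the orbit, so $\partial_s w \equiv 0$ and $w$ is itself a steady state; these give $u\equiv 0$ and $u=\kappa(-t)^{-1/(p-1)}$ respectively. A direct computation giving $E(\kappa) = \kappa^2/(2(p+1)) > 0 = E(0)$ combined with energy monotonicity excludes the transition $(\{0\},\{\kappa\})$. The only remaining case is a heteroclinic orbit with $\alpha(w)=\{\kappa\}$, $\omega(w)=\{0\}$, and the problem reduces to showing every such orbit coincides with the explicit family $w(y,s) = \kappa(1+Te^s)^{-1/(p-1)}$ for some $T>0$.

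To this end, set $v := w - \kappa$ and decompose along the eigenmodes of the Ornstein-Uhlenbeck linearization $\mathcal{L} v = \Delta v - \frac{y}{2}\cdot\nabla v + v$, whose $L^2_\rho$-spectrum is $\{1 - k/2 : k \in \mathbb{N}_0\}$ with Hermite polynomial eigenfunctions. Since $v(\cdot, s) \to 0$ as $s \to -\infty$, only the unstable modes (eigenvalues $1$ and $1/2$, corresponding to the constant eigenfunction and to the linear functions $y_i$) contribute to the leading backward asymptotics; a Merle-Zaag style ODE comparison projected onto each mode, with careful bookkeeping of nonlinear coupling, isolates the asymptotic behavior. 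I expect the \emph{main obstacle} to be eliminating the $n$-dimensional family of linear unstable modes ($y_i$, eigenvalue $1/2$): these are formally compatible with both the linearized evolution and the backward decay, but are inconsistent with the global constraint $0 \leq w \leq C$ once nonlinear feedback is taken into account. A careful mode-by-mode analysis forces the linear coefficients to vanish, leaving only the constant unstable mode, which parametrizes the time shift $T$ and produces $u(x, t) = \kappa(T - t)^{-1/(p-1)}$ as claimed.
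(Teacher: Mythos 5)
Your outline tracks the Merle--Zaag route faithfully through the first three of the paper's four sketched steps: the self-similar transform producing a bounded global solution $w$ of \eqref{self-similar eqn}, the use of Giga--Kohn's classification of bounded non-negative steady states to pin the $\alpha$- and $\omega$-limits to $\{0,\kappa\}$, the energy comparison $E(\kappa)=\kappa^2/(2(p+1))>0=E(0)$ that forbids a transition raising $E$, and the reduction to the heteroclinic orbit $\alpha(w)=\{\kappa\}$, $\omega(w)=\{0\}$. (Incidentally you have the excluded/remaining pair the right way around: the inadmissible orbit is the one with $\alpha=\{0\}$, $\omega=\{\kappa\}$.)

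The genuine gap is in the final step. You dismiss the neutral spectrum outright (``only the unstable modes... contribute to the leading backward asymptotics''), but the eigenvalue $0$ occurs at $|\alpha|=2$, and the whole point of the Merle--Zaag ODE lemma (\cite[Lemma A.1]{Merle-Zaag}) is a \emph{trichotomy}: as $s\to-\infty$ either the projection onto the positive eigenspace dominates, or the projection onto the kernel dominates (with only polynomial, not exponential, decay, driven by the quadratic nonlinear interaction). Ruling out the null-mode-dominated scenario is a nontrivial separate argument and cannot be waved away by looking only at linear modal growth rates. Similarly, your proposed mechanism for killing the $|\alpha|=1$ modes --- ``a careful mode-by-mode analysis forces the linear coefficients to vanish'' via the constraint $0\le w\le C$ --- is not how Merle--Zaag actually proceed. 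In their argument the linear modes are handled by composing with a space-time translation (the ``geometrical transformation'' in the paper's Step~4), which re-centres the base point of the self-similar rescaling; only after this normalisation, combined with a blow-up/non-blow-up criterion, does one conclude that the orbit coincides with the one-parameter ODE family. As written, the proposal names the right obstacle but does not supply the two ingredients --- the Merle--Zaag trichotomy including the kernel, and the re-centring transformation --- that actually close the argument.
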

The proof in \cite{Merle-Zaag} can be summarized as follows:
\begin{description}
    \item[Step 1] Define the self-similar transform $w$ as in \eqref{definition of self-similar transform}.
Then $w$ is a \emph{bounded solution} of  \eqref{self-similar eqn}.

\item[Step 2] Because  $p$ is subcritical, by Giga-Kohn  \cite{Giga-Kohn1985}, $w_{\pm\infty}(y)=\lim_{s\to\pm\infty} w(y, s)$ exists and they are bounded solutions of \eqref{SC1}. By Giga-Kohn's Liouville theorem, they could only be $0$ or $\kappa$. By Giga-Kohn's monotonicity formula,  they can exclude the possibility that 
$(w_{+\infty}, w_{-\infty})=(0, \kappa)$. Moreover, if
$(w_{+\infty}, w_{-\infty})=(0, 0)$ or $(w_{+\infty}, w_{-\infty})=(\kappa, \kappa)$, then $w$ is a constant solution.

\item[Step 3] For the remaining possibility $(w_{+\infty}, w_{-\infty})=(\kappa, 0)$,  they  linearize $w$ around the constant solution by defining $v=w-\kappa$. Then they proved the famous Merle-Zaag lemma  (see \cite[Lemma A.1]{Merle-Zaag}), and used this to classify the behavior of $v$ as $s\to-\infty$.

\item[Step 4] By combining the blow-up criteria and some geometrical transformation, they can finish the proof.
\end{description}

In our setting, the exponent $p$ is supercritical and the  solution is only a suitable weak ancient solution, so the following difficulties will be encountered.

First, we can still take the same self-similar transform as in Step 1, but now $w$ is not a \emph{bounded solution} of \eqref{self-similar eqn}. So  even if $w_{\pm\infty}(y)=\lim_{s\to\pm\infty} w(y, s)$  exists, the convergence holds only in some weak sense, and a priori $w_{\pm\infty}$ may not be smooth. 

Second,  it seems impossible to get any classification for solutions to \eqref{SC1}, so  an assumption on the blow-down limit, that is, \eqref{main assumption}, is necessary.

It turns out that under the assumption \eqref{main assumption}, the above difficulties can all be overcome. The main reason is: \eqref{main assumption} says that the ancient solution $u$ looks like the ODE solution $u_0$ at large scales, so we can perform \emph{a linearization analysis} around $u_0$. However, to implement this idea, in particular, to circumvent the difficulty by the a priori irregularity of $u$, we need to introduce \emph{a key auxiliary function} $RT(x)$. This function can be roughly understood as  the first time when $u$ is not close to  $u_0$. By using this function together with standard blow-up arguments, we can get  the same setting (e.g. a bounded solution $w$ of \eqref{self-similar eqn}) as in the above subcritical case.

To define $RT$, first, we establish a rigidity theorem, which  says
$\kappa$ is isolated among solutions to \eqref{SC1}. This then implies that the ODE solution $u_0$ is the \emph{unique} blow-down limit.  This rigidity result is inspired by the rigidity theorem for self-shrinkers to mean curvature flow in  Colding-Ilmanen-Minicozzi \cite{Colding-I-M2015}. We point out that in mean curvature flows, the entropy and the Gaussian density is nonnegative and the second fundamental form of self-shrinkers satisfies the same equation as its mean curvature  (see \cite[Proposition 4.5]{Colding-I-M2015}), but these  do not have natural counterparts for \eqref{eqn}. Therefore, in each step of our proof, we need to  find suitable substitutes. 

Because $u_0$ is a smooth ancient solution of \eqref{eqn}, uniqueness of the blow-down limit
implies that its self-similar transform, $w$ is locally bounded for all $-s\gg 1$, and it converges smoothly to $\kappa$ as $s\to-\infty$. This then allows us to introduce \emph{the key auxiliary function} $RT(x)$, which can be roughly defined as the first time the condition
\[|\Delta u(x,t)|<\frac{1}{2}u(x,t)^p\]
is violated, that is, the first time when $u$ is not close to the ODE solution $u_0$. 

If $RT$ is  bounded on $\R^n$, then we can show that the self-similar transform $w$ is bounded on $\R^n\times (-\infty,-T_0)$ for some $T_0>0$. This allows us to apply Merle-Zaag lemma as in Step 3 above to deduce that $w$ (hence $u$) is an ODE solution.

Of course, $RT$ could also be unbounded. This needs us to perform   rescaling one more time, which then gives a contradiction with the above result in the bounded $RT$ case.

For mean curvature flows, Choi-Haslhofer-Hershkovits \cite{Choi-H-H2022} and  Choi-Haslhofer-Hershkovits-White \cite{Choi-H-H-W2022} have established  some Liouville properties  for ancient solutions. These results give a characterization of   cylinders (as self-shrinkers to mean curvature flows), and our  Liouville theorem has a formal correspondence with these results. But it seems that the  nature of these two Liouville properties are very different. 
In fact, the ODE solution $u_0$ in our problem should correspond to spheres (as self-shrinkers to mean curvature flows), but it  also has  many similarities  with cylinders. For example,  some form of Merle-Zaag lemma are all needed in the proof of these Liouville results.

Finally,  the assumption that $m$ is not an integer is very important  for our proof. This is because we need a crucial compactness properties for suitable weak solutions of \eqref{eqn}, see Proposition \ref{prop compactness} below. In fact, if $m$ is an integer, the class $\mathcal{F}_{M}$  is not compact in $L^{p+1}_{loc}$ or $H^1_{loc}$. This is because   we can not apply Marstrand theorem (see \cite[Theorem 1.3.12]{Lin-Yang-book}) to prove that some defect measures \footnote{The precise definition can be found in \cite[Section 4]{Wang-Wei2021}} are trivial. We believe the analysis of this case needs more work.

This paper is organized as follows. In Section 3 and Section 4, we collect some preliminary results. In Section 5, we  prove a rigidity result about the constant solution $\kappa$. In Section 6,  this rigidity result is used to show the unique asymptotic behavior of $u$ under the assumptions of Theorem \ref{maintheorem1}. Based on this asymptotic behavior, we introduce the auxiliary function $RT$ in Section 7. In Section 8, we  prove a conditional Liouville theorem for solutions with bounded $RT$. In Section 9 and Section 10, we will combine the previous results to give the proof of Theorem \ref{maintheorem1} and Theorem \ref{maintheorem2} respectively. In Section 11, we discuss several applications of Theorem \ref{maintheorem2}. Finally, the proof of several technical results in the above process are given in appendices.

\vspace{2mm}

\textbf{Notation.}
\begin{itemize}
\item Throughout this paper, we  will use $C$ (large) and $c$ (small) to denote universal constants which depend only on $n$, $p$ and $M$. They could be different from line to line.

\item  For $(y, t)\in\mathbb{R}^{n}\times (0, \infty)$, we denote the standard heat kernel by
\[G(y, t)=(4\pi t)^{-\frac{n}{2}}\exp\left(-\frac{|y|^{2}}{{4t}}\right).\]
We also set
\[\rho(y):=G(y,1)=(4\pi)^{-\frac{n}{2}}\exp\left(-\frac{|y|^{2}}{{4}}\right)\]
to be the standard Gaussian.

\item The open ball in $\mathbb{R}^{n}$ is denoted by $B_{r}(x)$.
 The backward parabolic cylinder is $Q_{r}^{-}(x, t)=B_{r}(x)\times (t-r^{2}, t)$.
    If the center is the origin, it will not be written down explicitly.

\item For $1<q<+\infty$, we define the weighted spaces
\[L_{w}^{q}(\R^{n})=\left\{f\in L_{loc}^{q}(\R^{n}):\int_{\R^{n}}|f|^{q}e^{-\frac{|y|^{2}}{4}} dy<+\infty\right\}\]
and
\[H_{w}^{1}(\R^{n})=\left\{f\in H_{loc}^{1}(\R^{n}):\int_{\R^{n}}[|f|^{2}+|\nabla f|^{2}]e^{-\frac{|y|^{2}}{4}} dy<+\infty\right\}.\]

\item We use standard function space notations in the parabolic setting, e.g. $C^{2,1}$ denotes the space of functions which are $C^2$ in $x$ and $C^1$ in $t$.
\end{itemize}

\section{Preliminaries}\label{sec preliminary}
	\setcounter{equation}{0}
 
In this section, we  collect several preliminary results, which will be used frequently later.

The first one is the monotonicity formula for \eqref{eqn}. 
\begin{lem}[Monotonicity formula]\label{Monotonicity formula}
 For any $u\in\mathcal{F}_M$ and $(x,t)\in\R^n\times(-\infty,0]$, the quantity
\begin{align}
E(s; x, t, u):=&\frac{1}{2}s^{\frac{p+1}{p-1}}\int_{\mathbb{R}^{n}}|\nabla u(y, t-s)|^{2}G(y-x, s)dy\notag\\
&-\frac{1}{p+1}s^{\frac{p+1}{p-1}}\int_{\mathbb{R}^{n}}|u(y, t-s)|^{p+1}G(y-x, s)dy\\
&+\frac{1}{2(p-1)}s^{\frac{2}{p-1}}\int_{\mathbb{R}^{n}}u(y, t-s)^2G(y-x, s)dy\notag
\end{align}
is a non-decreasing function of $s$ in $(0,+\infty)$.

Moreover, if $E(s;x,t,u)\equiv \text{const.}$, then $u$ is backward self-similar with respect to $(x,t)$.
\end{lem}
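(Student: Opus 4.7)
The plan is to recognize $E(s;x,t,u)$ as the Giga--Kohn energy functional \eqref{energy} evaluated on the self-similar rescaling of $u$ about the base point $(x,t)$, and then to exploit the fact that this functional is a Lyapunov quantity for the renormalized flow \eqref{self-similar eqn}.

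Concretely, for fixed $(x,t)$, set $\sigma := -\log s$ and define
\[
w(z,\sigma) \;:=\; s^{\frac{1}{p-1}}\, u\bigl(x+\sqrt{s}\,z,\; t-s\bigr).
\]
Changing variables $y = x + \sqrt{s}\,z$ in each of the three integrals defining $E$, and using $G(y-x,s) = s^{-n/2}\rho\bigl((y-x)/\sqrt{s}\bigr)$ together with the scaling identities $\nabla_y u = s^{-\frac{1}{p-1}-\frac{1}{2}}\nabla_z w$ and $u = s^{-\frac{1}{p-1}}w$, a short calculation shows that all $s$-prefactors collapse and
\[
E(s;x,t,u) \;=\; \int_{\R^{n}}\!\Bigl[\tfrac{1}{2}|\nabla w|^{2}+\tfrac{w^{2}}{2(p-1)}-\tfrac{|w|^{p+1}}{p+1}\Bigr]\rho(z)\,dz \;=\; E\bigl(w(\cdot,\sigma)\bigr),
\]
with $E(\cdot)$ the energy in \eqref{energy}. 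Since $d\sigma/ds = -1/s<0$, the claim is equivalent to showing that $E(w(\cdot,\sigma))$ is non-increasing in $\sigma$.

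For a smooth solution, $w$ solves \eqref{self-similar eqn} pointwise, and a formal integration by parts against the Gaussian measure $\rho\,dz$ (using $\nabla\rho = -\tfrac{z}{2}\rho$ to absorb the drift term) gives the classical Lyapunov identity
\[
\frac{d}{d\sigma}E\bigl(w(\cdot,\sigma)\bigr) \;=\; -\int_{\R^{n}}(\partial_\sigma w)^{2}\,\rho(z)\,dz \;\leq\; 0.
\]
This is the gradient-flow structure alluded to between \eqref{self-similar eqn} and \eqref{energy}.

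The main obstacle is that for a suitable weak solution $u\in\mathcal{F}_M$, the rescaled function $w$ is only in $H^{1}_{\mathrm{loc}}\cap L^{p+1}_{\mathrm{loc}}$, and the Gaussian weight $\rho$ is not compactly supported, so the localized energy inequality \eqref{energy inequality I} and stationary condition \eqref{stationary condition I} cannot be inserted directly. I would handle this by a standard cutoff approximation: choose a smooth spatial cutoff $\chi_R(z)$ with $\chi_R\equiv 1$ on $B_R$ and supported in $B_{2R}$, together with a smooth temporal mollifier $\phi_\varepsilon(\sigma)$, and use the product $\chi_R(z)\rho(z)\phi_\varepsilon(\sigma)$ (pulled back to the original $(y,t')$ coordinates) as a test function in \eqref{energy inequality I}. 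The resulting inequality reads, after collecting terms and using \eqref{stationary condition I} to handle the scaling correction,
\[
E\bigl(w(\cdot,\sigma_{2})\bigr)-E\bigl(w(\cdot,\sigma_{1})\bigr) \;\leq\; -\!\int_{\sigma_{1}}^{\sigma_{2}}\!\!\int_{\R^{n}}(\partial_\sigma w)^{2}\,\chi_R\,\rho\,dz\,d\sigma \;+\; \mathrm{Err}(R),
\]
where $\mathrm{Err}(R)$ collects all boundary terms involving $\nabla\chi_R$, supported on the annulus $B_{2R}\setminus B_R$. On this annulus $\rho\leq e^{-R^{2}/4}$, while the Morrey estimate \eqref{Morreyeatimates} bounds the relevant local $L^{2}$ norms of $w$, $\nabla w$, $\partial_\sigma w$ and the local $L^{p+1}$ norm of $w$ by at most a polynomial in $R$. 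Hence $\mathrm{Err}(R)\to 0$ as $R\to\infty$, yielding the monotonicity of $E(w(\cdot,\sigma))$ in $\sigma$, and thus of $E(s;x,t,u)$ in $s$.
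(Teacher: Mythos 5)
Your change of variables is correct: with $\sigma=-\log s$ and $w(z,\sigma)=s^{1/(p-1)}u(x+\sqrt{s}z,t-s)$, the three prefactors $s^{(p+1)/(p-1)}$ and $s^{2/(p-1)}$ combine with $G(y-x,s)\,dy=\rho(z)\,dz$ to give exactly $E(s;x,t,u)=E(w(\cdot,\sigma))$ in the sense of \eqref{energy}, and the Lyapunov identity $\frac{d}{d\sigma}E(w(\cdot,\sigma))=-\int(\partial_\sigma w)^2\rho\,dz$ is the reformulated Giga--Kohn formula the paper is citing. The paper itself offers no proof beyond the reference to \cite[Proposition~3]{Giga-Kohn1985}, so you are reconstructing essentially the same argument; your extra observation that for suitable weak solutions one must test \eqref{energy inequality I} (and \eqref{stationary condition I} for the drift terms $\frac{z}{2}\cdot\nabla w$ inside $\partial_\sigma w$) against Gaussian-weighted cutoffs, and then let $R\to\infty$ using the Morrey bound \eqref{Morreyeatimates} to kill the annular error terms against the $e^{-R^2/4}$ decay, is exactly the unstated approximation step the paper alludes to later (in the proof of Proposition~\ref{prop convergence for self-similar equations}), so the approach matches.
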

This is a reformulation of Giga-Kohn's monotonicity formula in \cite[Proposition 3]{Giga-Kohn1985}. For classical solutions,  this monotonicity formula is usually proved by integration by parts, which unfortunately does not work for weak solutions. The notion of suitable weak solutions in Definition \ref{definition suitable weak solution} is designed exactly for this purpose. By substituting suitable test functions into  the three conditions in that definition\footnote{For example, we can plug the vector field $Y(x)=\eta(x)x$ into the stationary condition \eqref{stationary condition I}, where $\eta$ is a standard cut-off function.}, and then passing to the entire space $\R^n$ \footnote{Here we also need the Morrey space estimate from the definition of $\mathcal{F}_M$, Definition \ref{definition class of F}. This condition implies that $u$ is bounded in averaged sense, and it guarantees various error terms from cut-off functions in this calculation are negligible so that we can pass to $\R^n$.}, we get the same result as in the classical case (but understood in the distributional sense).
  
Sometimes it is convenient to consider an averaged version of $E$, that is,
\[\bar{E}(s; x, t, u)=s^{-1}\int_{s}^{2s}E(\tau; x, t, u)d\tau.\]
This is because $\bar{E}$ involves space-time integrals, and hence it is continuous in $(x,t)$.

We also need an  $\varepsilon$-regularity theorem on \eqref{eqn}, see Du  \cite[Theorem 3.1]{Dushizhong2019} and  also Miura-Takahashi \cite[Section 4]{Takahashi}.
\begin{prop}[$\varepsilon$-regularity]\label{epsilonregularity}
There exist universal constants $\varepsilon_{0}$ and $C_{0}$ so that the following holds. If there exist $s, \delta$  with $s\geq 
\delta$ such that for any $(x, t)\in Q_{2\delta}^-(x_{0}, t_{0})$,
\[\bar{E}(s; x, t, u)\leq\varepsilon_{0},\]
then
\[\sup_{Q^{-}_{\delta}(x_{0}, t_{0})}|u|\leq C_{0}\delta^{-\frac{2}{p-1}}.\]
\end{prop}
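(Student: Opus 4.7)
The plan is a blow-up argument by contradiction, which is the classical route to $\varepsilon$-regularity theorems for parabolic geometric equations. Suppose the conclusion fails. Then there exist suitable weak solutions $u_k$, parameters $s_k\geq\delta_k$, and base points $(x_k,t_k)$ with
\[\bar E(s_k;x,t,u_k)\leq 1/k\quad\text{for all }(x,t)\in Q_{2\delta_k}^-(x_k,t_k),\]
yet $\sup_{Q_{\delta_k}^-(x_k,t_k)}|u_k|>k\,\delta_k^{-2/(p-1)}$. The hypothesis, conclusion, and the Morrey bound \eqref{Morreyeatimates} are all scale invariant, so I normalize to $(x_k,t_k)=(0,0)$, $\delta_k=1$, $s_k\geq 1$.

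Next I would perform a point-selection: pick $(y_k,\tau_k)\in Q_1^-$ maximizing the scale-invariant quantity $|u_k(x,t)|\,d_k(x,t)^{2/(p-1)}$, where $d_k(\cdot)$ denotes the parabolic distance to $\partial_{\mathrm{par}}Q_1^-$. Set $r_k:=d_k(y_k,\tau_k)/2$ and $\mu_k:=|u_k(y_k,\tau_k)|^{-(p-1)/2}$, and rescale:
\[v_k(x,t):=\mu_k^{2/(p-1)}u_k(y_k+\mu_k x,\tau_k+\mu_k^2 t).\]
By construction $|v_k(0,0)|=1$, and maximality yields $|v_k|\leq 2^{2/(p-1)}$ on $Q_{R_k}^-$ with $R_k:=r_k/\mu_k\to\infty$. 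Because \eqref{Morreyeatimates} is scale invariant, each $v_k$ still lies in $\mathcal{F}_M$ on arbitrarily large cylinders. Invoking the compactness of $\mathcal{F}_M$ in $L^{p+1}_{\mathrm{loc}}\cap H^1_{\mathrm{loc}}$ (Proposition \ref{prop compactness} cited in the introduction, which relies on the non-integrality of $m$), I extract a subsequential limit $v$ that is a locally bounded suitable weak ancient solution of \eqref{eqn} satisfying $|v(0,0)|=1$.

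To finish, I would show $v\equiv 0$, which contradicts $|v(0,0)|=1$. Under the rescaling, the smallness assumption becomes $\bar E(s;x,t,v_k)\leq 1/k$ for $s\leq s_k/\mu_k^2\to\infty$ and $(x,t)$ in growing cylinders; by Lemma \ref{Monotonicity formula}, monotonicity in $s$ extends this smallness to every smaller scale. Strong $L^{p+1}_{\mathrm{loc}}$ convergence $v_k\to v$ then passes to the limit:
\[\bar E(s;x,t,v)=0\quad\text{for every }s>0\text{ and every }(x,t)\in\mathbb{R}^n\times(-\infty,0].\]
Since $v$ is bounded, hence smooth by parabolic regularity, $E(s;x,t,v)\to 0$ as $s\to 0^+$, and monotonicity of $E$ then forces $E(\cdot;x,t,v)\equiv 0$ for every base point. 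The dissipation identity behind the monotonicity formula forces the self-similar transform of $v$ about every base point to be stationary in $s$. Imposing this simultaneously for all base points $(x_0,0)$ with $x_0\in\mathbb{R}^n$ reduces $v$ to the form $v(y,\tau)=c\,(-\tau)^{-1/(p-1)}$ for $\tau<0$; boundedness forces $c=0$, and continuity at $\tau=0$ gives $v(0,0)=0$, the desired contradiction.

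The principal obstacle is passage to the limit in the nonlinear term $|v_k|^{p+1}\rho$ inside $\bar E$, which demands strong (not merely weak) convergence in $L^{p+1}_{\mathrm{loc}}$. This is precisely where non-integrality of $m=2(p+1)/(p-1)$ enters: it underpins the strong compactness of $\mathcal{F}_M$ via Marstrand-type vanishing of the defect measures associated with $\{v_k\}$. Without this, concentration could survive in the limit and the identity $\bar E(\cdot;v)=0$ would not follow, breaking the scheme.
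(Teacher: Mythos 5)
The paper does not actually prove Proposition~\ref{epsilonregularity}: it is quoted from Du \cite[Theorem~3.1]{Dushizhong2019} and Takahashi--Miura \cite[Section~4]{Takahashi}. Those references establish $\varepsilon$-regularity by a direct iteration built on the localized energy inequality~\eqref{energy inequality I}, yielding Morrey--Campanato decay of integral quantities on shrinking cylinders and then parabolic $L^\infty$ estimates --- an argument that never touches pointwise values of $u$. Your proposal follows an entirely different route (a blow-up-by-contradiction with a point-selection/doubling step and compactness), so it should be judged on its own merits.

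The crucial gap is the point-selection step. You pick $(y_k,\tau_k)$ to maximize $|u_k(x,t)|\,d_k(x,t)^{2/(p-1)}$ and then use maximality to get a uniform $L^\infty$ bound on the rescaled solutions $v_k$. But $u_k$ is only a \emph{suitable weak solution}: a priori it lies in $L^{p+1}\cap H^1$ and may be genuinely singular in $Q_1^-$, so $|u_k|$ is defined only almost everywhere, the weighted supremum may be $+\infty$ on every neighbourhood of a singular point, and the maximum is not attained. Regularity --- even continuity --- of $u$ on $Q_\delta^-$ is precisely the \emph{conclusion} of the $\varepsilon$-regularity theorem, so invoking pointwise values of $u_k$ in the proof is circular. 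Doubling arguments of this type are standard for proving \emph{a priori} bounds for solutions already known to be smooth; they do not transfer to the weak setting, which is why Caffarelli--Kohn--Nirenberg-type proofs replace them by iteration of integral (averaged) quantities.

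Two further signals that the approach is off-track. First, you invoke the strong compactness of $\mathcal{F}_M$ (Proposition~\ref{prop compactness}), which the paper notes requires the non-integrality of $m=2(p+1)/(p-1)$; but the $\varepsilon$-regularity theorem holds for all $p>1$ (the paper explicitly relies on this in Theorem~\ref{thm rigidity 2}, stated without the non-integrality restriction), so a correct proof must not depend on that hypothesis. Second, even granting the point selection, extracting $v(0,0)=1$ from $v_k\to v$ in $L^{p+1}_{\mathrm{loc}}$ requires uniform local H\"older bounds on $v_k$ near $(0,0)$, which again presuppose the uniform $L^\infty$ control that the point selection was supposed to deliver. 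The endgame of your argument (that $\bar E\equiv 0$, monotonicity, and the dissipation identity force $v(y,\tau)=c(-\tau)^{-1/(p-1)}$ and then $c=0$ by boundedness) is sound, but it cannot compensate for the flawed set-up.
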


The next result 
is a compactness property of $\mathcal{F}_{M}$, whose proof can be found in \cite[Theorem 6.1]{Wang-Wei2021}. This is the main reason that we only consider the case when $m$ is not an integer.
\begin{prop}[Compactness of $\mathcal{F}_M$]\label{prop compactness}
For any sequence $u_k\in\mathcal{F}_M$,  after passing to a subsequence, there exists a function $u_\infty\in\mathcal{F}_M$ such that
\begin{itemize}
    \item  $u_k$ converges strongly to  $u_\infty$ in $L^{p+1}_{loc}(\R^n\times(-\infty,0))$;
    \item $\nabla u_k$ converges strongly to   $\nabla u_\infty$ in $L^2_{loc}(\R^n\times(-\infty,0))$;
    \item $\partial_tu_k$ converges weakly to   $\partial_tu_\infty$ in $L^2_{loc}(\R^n\times(-\infty,0))$.
  \end{itemize}
\end{prop}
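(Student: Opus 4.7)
The plan is to perform the standard weak-to-strong upgrade for nonlinear parabolic problems, with the crucial input that $m\notin\mathbb{Z}$ used to rule out concentration of energy via Marstrand's theorem on Radon measures.

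First I would use the Morrey bound \eqref{Morreyeatimates} on a countable exhausting family of backward cylinders to obtain uniform local bounds on $\nabla u_k$ and $\partial_t u_k$ in $L^2$ and on $u_k$ in $L^{p+1}$. A diagonal extraction produces weak limits $\nabla u_k\rightharpoonup \nabla u_\infty$ in $L^2_{loc}$, $\partial_t u_k\rightharpoonup \partial_t u_\infty$ in $L^2_{loc}$, and $u_k\rightharpoonup u_\infty$ in $L^{p+1}_{loc}$. By the Aubin--Lions lemma, the combination of spatial $H^1$ control with temporal $L^2$ control on $\partial_t u_k$ upgrades the last convergence to strong $L^2_{loc}$ and a.e.\ convergence of $u_k$; in particular $|u_k|^{p-1}u_k \to |u_\infty|^{p-1}u_\infty$ a.e., which combined with the $L^{p+1}_{loc}$ bound allows passage to the limit in the weak formulation \eqref{weak solution I}. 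Lower semicontinuity of the Morrey norm gives the bound \eqref{Morreyeatimates} for $u_\infty$.

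The essential content is to rule out energy defect. After a further extraction, I introduce the nonnegative Radon measures
\[
\nu_1 := \lim_{k\to\infty}|u_k|^{p+1}dxdt - |u_\infty|^{p+1}dxdt,\qquad \nu_2 := \lim_{k\to\infty}|\nabla u_k|^2 dxdt - |\nabla u_\infty|^2 dxdt,
\]
understood in the weak-$\ast$ sense. Once $\nu_1\equiv 0\equiv \nu_2$ is proved, uniform convexity of $L^{p+1}$ and $L^2$ upgrades the weak convergences to strong, and the stationary condition \eqref{stationary condition I} together with the localized energy inequality \eqref{energy inequality I} then pass to the limit, completing $u_\infty\in\mathcal{F}_M$. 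To show triviality of the defect measures, I would first observe that the Morrey bound transfers to $\nu_1,\nu_2$: their masses on $Q_r^-(x,t-r^2)$ are bounded by $Cr^{n+2-m}$, so $\text{supp}(\nu_1+\nu_2)$ has parabolic Hausdorff dimension at most $n+2-m$. The $\varepsilon$-regularity Proposition \ref{epsilonregularity} combined with the monotonicity formula Lemma \ref{Monotonicity formula} shows that at every point of this support the rescaled monotone quantity is bounded below by $\varepsilon_0$; a parabolic tangent-measure analysis then gives that every tangent measure is self-similar, i.e.\ exactly $(n+2-m)$-homogeneous under parabolic dilation. Applying Marstrand's theorem \cite[Theorem~1.3.12]{Lin-Yang-book}, no nonzero Radon measure can be exactly $\alpha$-homogeneous with $\alpha\notin\mathbb{Z}$; since by hypothesis $m\notin\mathbb{Z}$, so too is $n+2-m$, forcing $\nu_1\equiv 0\equiv \nu_2$.

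The main obstacle is the tangent-measure/self-similarity step. Constructing tangent measures requires a joint blow-up analysis of the solutions and of the defect measures, and showing exact self-similarity relies on the monotonicity formula and the stationary condition applied to limit objects before strong convergence is available; one needs to run these arguments purely at the level of the defect measures, exploiting the $\varepsilon$-regularity theorem to control supports across rescalings. This is precisely where the non-integrality of $m$ enters, and it is what motivates the full machinery of \cite[Theorem~6.1]{Wang-Wei2021}.
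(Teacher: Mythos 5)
Your proposal correctly identifies the strategy that the paper itself only cites (Theorem 6.1 of \cite{Wang-Wei2021}): extract weak limits, pass to the limit in the weak formulation via Aubin--Lions and a.e.\ convergence, and then kill the energy defect measures $\nu_1,\nu_2$ via Marstrand's theorem using the non-integrality of $m$. This is the right skeleton, and the paper's own remarks in Subsection~2.4 confirm that this is where $m\notin\mathbb{Z}$ is used.

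However, two steps in your Marstrand argument are misstated and need repair. First, you claim that the Morrey-type upper bound $\nu(Q_r^-(x,t-r^2))\le Cr^{n+2-m}$ ``so $\text{supp}(\nu_1+\nu_2)$ has parabolic Hausdorff dimension at most $n+2-m$.'' An upper density bound alone controls nothing about the support dimension (Lebesgue measure in $\mathbb{R}^n$ has bounded $\alpha$-upper density for any $\alpha<n$ on small scales, yet full-dimensional support). The dimension estimate requires the \emph{lower} density bound $\nu(Q_r^-)\ge c\,r^{n+2-m}$ at points of the support, which comes from the $\varepsilon$-regularity and the monotonicity formula; you introduce these ingredients only afterwards, so the logical ordering is reversed. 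Second, your formulation of Marstrand's theorem---``no nonzero Radon measure can be exactly $\alpha$-homogeneous with $\alpha\notin\mathbb{Z}$''---is false as stated: for instance $|x|^{\alpha-n}\,dx$ is exactly $\alpha$-homogeneous about the origin for any $0<\alpha<n$. The correct statement (and what \cite[Theorem~1.3.12]{Lin-Yang-book} gives) is that if $0<\Theta^{\alpha}(\mu,x)<\infty$ exists for $\mu$-a.e.\ $x$, then $\alpha$ must be an integer; the route through tangent measures is to show they are $\alpha$-\emph{uniform} (not merely homogeneous at one point), which is what forces $\alpha\in\mathbb{Z}$. Finally, Marstrand's theorem as stated is Euclidean, while your densities are parabolic; some reduction (e.g.\ slicing in time, or a parabolic adaptation) is needed before it applies and you do not address this. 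None of these invalidate the approach, but they need to be fixed before the argument closes.
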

\begin{rmk}
    Throughout the paper,  if a sequence $u_k\in\mathcal{F}_M$ converges to a limit $u_\infty$ in the above sense, then we say $u_k\to u_\infty$. Notice that in Proposition \ref{prop compactness}, the strong convergence of $\partial_{t}u_{k}$ is not claimed.
\end{rmk}
\begin{coro}\label{coro convergence of singular points}
Given a sequence of  $u_k\in \mathcal{F}_M$ and $(x_k,t_k)\in \R^n\times(-\infty,0]$, if $u_k\to u_\infty$, $(x_k,t_k)\to (x_\infty, t_\infty)$ and $(x_k,t_k)\in\text{Sing}(u_k)$, then  $(x_\infty,t_\infty)\in\text{Sing}(u_\infty)$.

Equivalently, if $(x_\infty, t_\infty)\in \text{Reg}(u_\infty)$, then there exists a $\delta>0$ such that 
for all $k$ large, $u_k\in C^{2,1}(Q_\delta^-(x_\infty,t_\infty))$ and it converges to $u_\infty$ in this space.
\end{coro}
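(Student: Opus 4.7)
The plan is to prove the equivalent (and strictly stronger) second formulation: assuming $(x_\infty,t_\infty)\in\text{Reg}(u_\infty)$, I would show that $u_k$ is smooth in some fixed cylinder $Q_\delta^-(x_\infty,t_\infty)$ for all large $k$, with smooth convergence to $u_\infty$ there; this immediately contradicts $(x_k,t_k)\in\text{Sing}(u_k)$ once $(x_k,t_k)\to(x_\infty,t_\infty)$ enters $Q_\delta^-(x_\infty,t_\infty)$. The strategy is to combine the $\varepsilon$-regularity of Proposition \ref{epsilonregularity} with the strong convergence provided by Proposition \ref{prop compactness}.

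First I would use the regularity of $u_\infty$ to drive its Giga-Kohn energy down. Since $u_\infty\in L^\infty(Q_{2r}^-(x_\infty,t_\infty))$ for some $r>0$, standard parabolic regularity makes $u_\infty$ smooth in $Q_{3r/2}^-(x_\infty,t_\infty)$, and hence the three spatial integrals defining $E(s;x,t,u_\infty)$ are uniformly bounded for $(x,t)\in Q_r^-(x_\infty,t_\infty)$ and $s\in(0,r^2/4]$. The prefactors $s^{(p+1)/(p-1)}$ and $s^{2/(p-1)}$ then force $E(s;x,t,u_\infty)\to 0$ as $s\to 0^+$, uniformly on $Q_r^-(x_\infty,t_\infty)$. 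The same is true of the averaged version $\bar E(s;x,t,u_\infty)$. I would pick $\delta\in(0,r/4)$ so small that
\[
\bar E(\delta;x,t,u_\infty)<\varepsilon_0/2,\qquad \forall\,(x,t)\in Q_{r/2}^-(x_\infty,t_\infty),
\]
where $\varepsilon_0$ is the constant from Proposition \ref{epsilonregularity}.

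Next I would transfer this bound to $u_k$. Because $\bar E$ is a space-time average, it depends continuously on $(x,t)$ through translations of the Gaussian, and it is a continuous functional of $u$ with respect to the $L^{p+1}_{loc}$ norm (for the $|u|^{p+1}$ term), the $L^2_{loc}$ norm of the gradient (for the $|\nabla u|^2$ term), and the $L^2_{loc}$ norm of $u$ (for the mass term). Since $u_k\to u_\infty$ strongly in each of these senses and the Gaussian weight decays exponentially (so dominated convergence cuts off the tail), one obtains $\bar E(\delta;x,t,u_k)\to\bar E(\delta;x,t,u_\infty)$ uniformly in $(x,t)\in Q_{r/2}^-(x_\infty,t_\infty)$. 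Therefore, for all $k$ large,
\[
\bar E(\delta;x,t,u_k)<\varepsilon_0,\qquad \forall\,(x,t)\in Q_{2\delta}^-(x_\infty,t_\infty).
\]
Applying Proposition \ref{epsilonregularity} to each $u_k$ yields $\|u_k\|_{L^\infty(Q_\delta^-(x_\infty,t_\infty))}\le C_0\delta^{-2/(p-1)}$. Uniform boundedness plus parabolic regularity then gives uniform $C^\ell$ bounds on every smaller cylinder, and interpolation with the $L^{p+1}_{loc}$ convergence upgrades $u_k\to u_\infty$ to smooth convergence on $Q_{\delta/2}^-(x_\infty,t_\infty)$. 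Since $(x_k,t_k)\in Q_{\delta/2}^-(x_\infty,t_\infty)$ for $k$ large, this contradicts $(x_k,t_k)\in\text{Sing}(u_k)$ and completes the proof.

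The only nontrivial point is the uniform-in-$(x,t)$ convergence $\bar E(\delta;x,t,u_k)\to\bar E(\delta;x,t,u_\infty)$; this is where the averaging built into $\bar E$ is essential, as the raw quantity $E(s;x,t,u)$ need not be continuous in $(x,t)$ for a general suitable weak solution. Once this uniform convergence is in hand, the $\varepsilon$-regularity theorem does all of the heavy lifting.
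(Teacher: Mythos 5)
Your proposal is correct and follows exactly the route the paper indicates in its one-line proof: combine the $\varepsilon$-regularity statement (Proposition \ref{epsilonregularity}) with the continuity of the averaged quantity $\bar{E}$ in both $(x,t)$ and $u$ (with $u$ in the topology of Proposition \ref{prop compactness}). You have simply filled in the details the paper leaves implicit — driving $\bar{E}(\delta;\cdot,\cdot,u_\infty)$ below $\varepsilon_0/2$ using local boundedness and the Gaussian tail cutoff, transferring this to $u_k$ via strong $L^{p+1}_{loc}$ and $H^1_{loc}$ convergence, and then invoking $\varepsilon$-regularity plus interior parabolic estimates — and your observation about why the averaged $\bar{E}$ (rather than $E$) is the right quantity matches the remark following its definition in the paper.
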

\begin{proof}
    This follows by combining Proposition \ref{epsilonregularity} with the continuity of $\bar{E}$ with respect to  both $(x,t)$ and $u$ (using the topology specified in Proposition \ref{prop compactness}).
\end{proof}

\begin{prop}[blow-down/up limits]\label{prop blow-down limit}
For any $u\in\mathcal{F}_M$, $(x_0,t_0)\in\mathbb{R}^{n}\times(-\infty, 0]$ and sequence $\{\lambda_{k}\}$ tending to $+\infty$ as $k\to\infty$, denote the blow-down  sequence by
\begin{equation} \label{revise 1}
u_{k}(x,t):= \lambda_{k}^{\frac{2}{p-1}} u(x_0+\lambda_{k} x, t_0+\lambda_{k}^2 t).
\end{equation}
Then up to subsequences, $u_k\to u_\infty$ as $k\to+\infty$, where $u_\infty\in\mathcal{F}_M$.  Moreover, there exists a function $w_{\infty}\in\mathcal{S}_{M}$ such that $u_{\infty}(x, t)=(-t)^{-\frac{1}{p-1}}w_{\infty}(\frac{x}{\sqrt{-t}})$.

If $\{\lambda_{k}\}$ tends to $0$ as $k\to\infty$ and we define blow-up sequence as in \eqref{revise 1}, then the same results hold.
\end{prop}
\begin{proof}
This is essentially \cite[Lemma 5.2]{Wang-Wei2021}.
\end{proof}

At this stage, it is worth to point out that in the above  result, $u_{\infty}$ is obtained by a compactness argument, so it is not clear if they are independent of the choice of subsequences.

Finally, we prove some uniform integral estimates for functions in $\mathcal{F}_{M}$ or $\mathcal{S}_M$.
\begin{lem}\label{pre-estimate}
There exists a universal, positive constant $C$    such that for any $w\in\mathcal{S}_M$, 
\[\int_{\mathbb{R}^{n}}(|\nabla w|^{2}+|w|^{p+1})\rho dy\leq C.\]
In other words,  if   $w\in\mathcal{S}_{M}$, then $w\in H_{w}^{1}(\mathbb{R}^{n})\cap L_{w}^{p+1}(\mathbb{R}^{n})$.
\end{lem}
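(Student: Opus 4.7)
The plan is to exploit the backward self-similar structure of $u(x,t):=(-t)^{-1/(p-1)}w(x/\sqrt{-t})$ to convert the Gaussian-weighted integrals of $w$ over $\mathbb{R}^n$ into parabolic integrals of $u$ that are directly controlled by the Morrey estimate \eqref{Morreyeatimates}. The starting observation is that, via the substitution $z=y/\sqrt{s}$ together with the scaling identities $|u(y,-s)|^{p+1}=s^{-(p+1)/(p-1)}|w(y/\sqrt{s})|^{p+1}$ and $|\nabla_yu(y,-s)|^2=s^{-(p+1)/(p-1)}|\nabla w(y/\sqrt{s})|^2$ (the latter uses $\tfrac{2}{p-1}+1=\tfrac{p+1}{p-1}$), one obtains for every $s>0$
\[
s^{\frac{p+1}{p-1}}\!\int_{\mathbb{R}^n}|u(y,-s)|^{p+1}G(y,s)\,dy=\int_{\mathbb{R}^n}|w(z)|^{p+1}\rho(z)\,dz,
\]
and the analogous identity with $|\nabla u|^2$ and $|\nabla w|^2$. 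In particular the left-hand sides are independent of $s$, and after averaging over $s\in(1,2)$ the lemma reduces to bounding
\[
I:=\int_1^2\!\int_{\mathbb{R}^n}\bigl(|\nabla u(y,-s)|^2+|u(y,-s)|^{p+1}\bigr)G(y,s)\,dy\,ds\le CM.
\]

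I would estimate $I$ by a dyadic annular decomposition. Set $A_0:=B_1$ and $A_k:=B_{2^k}\setminus B_{2^{k-1}}$ for $k\ge 1$; on $A_k\times[-2,-1]$ a direct computation gives the pointwise bound $G(y,-t)\le Ce^{-c4^k}$ for universal $c,C>0$. Writing $t=-s$ and splitting the spatial integral,
\[
I\le C\sum_{k\ge 0}e^{-c4^k}\int_{-2}^{-1}\!\int_{A_k}\bigl(|\nabla u|^2+|u|^{p+1}\bigr)\,dx\,dt.
\]
Cover each $A_k$ by $O(2^{kn})$ unit balls $B_1(x_i)$ with $x_i\in A_k$ and apply \eqref{Morreyeatimates} at each base point $(x_i,0)$ with $r=1$: since $Q_1^-(x_i,-1)=B_1(x_i)\times(-2,-1)$, each application yields $\int_{-2}^{-1}\!\int_{B_1(x_i)}(|\nabla u|^2+|u|^{p+1})\,dx\,dt\le M$. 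Summing the covering contributions and then summing over $k$, the super-exponential decay beats the polynomial growth ($\sum_k 2^{kn}e^{-c4^k}<\infty$), and we conclude $I\le CM$. Unwinding through the self-similar identity gives $\int_{\mathbb{R}^n}(|\nabla w|^2+|w|^{p+1})\rho\,dy\le C$. Finally, the inclusion $w\in H^1_w(\mathbb{R}^n)\cap L^{p+1}_w(\mathbb{R}^n)$ is immediate from H\"older's inequality: $\int w^2\rho\le\bigl(\int|w|^{p+1}\rho\bigr)^{2/(p+1)}\bigl(\int\rho\bigr)^{(p-1)/(p+1)}<\infty$.

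No step is deep; the real content is the exact matching of scales, as the Morrey cylinder $Q_1^-(x_i,-1)$ aligns precisely with our averaging window in $s\in(1,2)$, while the $s$-exponents cancel in the self-similar identity. The only items worth verifying carefully in a full write-up are this exponent cancellation and the summability $\sum_k 2^{kn}e^{-c4^k}<\infty$; no obstruction from the assumption that $m$ is not an integer enters here, so the lemma holds for all supercritical $p$.
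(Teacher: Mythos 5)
Your proof is correct and follows essentially the same route as the paper's: the self-similar change of variables converts the Gaussian-weighted integral of $w$ into a space-time integral of $u$ on $\{-2<t<-1\}$, which is then controlled by covering annuli with unit parabolic cylinders, applying the Morrey estimate \eqref{Morreyeatimates} at scale $r=1$, and summing against the Gaussian decay. The only cosmetic differences are that you average over $s\in(1,2)$ and use dyadic annuli, while the paper first derives the intermediate polynomial bound $\int_{|y|\leq\lambda}|w|^{p+1}dy\leq C\lambda^n$ and then sums over integer shells; the underlying mechanism (unit-scale Morrey bound plus Gaussian decay beating polynomial growth) is identical, and your observation that supercriticality is not strictly needed here is also consistent with the paper's argument, which invokes it only to simplify a lower bound on the factor $(-t)^{-\frac{p+1}{p-1}+\frac{n}{2}}$.
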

\begin{proof}
Denote $u(x, t):=(-t)^{-\frac{1}{p-1}}w(\frac{x}{\sqrt{-t}})$, then $u\in\mathcal{F}_{M}$. For each $\lambda\geq 1$ and $x_{0}\in B_{2\lambda}$, it follows from \eqref{Morreyeatimates} that
\[\int_{Q_{1}^{-}(x_{0}, -1)}|u|^{p+1}dxdt\leq M.\]
Notice that there exists a positive constant $C(n)$ such that we can cover $B_{2\lambda}\times (-2, -1)$
 by at at most $C(n)\lambda^{n}$ backward parabolic cylinders of size $1$, then
\begin{equation*}\label{2.1}
\int_{-2}^{-1}\int_{|x|\leq 2\lambda}|u|^{p+1}dxdt\leq C\lambda^{n},
\end{equation*}
where $C$ is a universal positive constant. This implies that
\begin{equation*}
\int_{-2}^{-1}\int_{|x|\leq \sqrt{-t}\lambda}|u|^{p+1}dxdt\leq C\lambda^{n}.
\end{equation*}
By the definition of $u$, this integral can be written as
\[\int_{-2}^{-1}\int_{|x|\leq\sqrt{-t}\lambda}|u|^{p+1}dxdt=\int_{-2}^{-1}(-t)^{-\frac{p+1}{p-1}+\frac{n}{2}}dt\int_{|y|\leq\lambda}|w|^{p+1}dy.\]
Since $p$ is supercritical, we have $-\frac{p+1}{p-1}+\frac{n}{2}>0$. Hence
\[(-t)^{-\frac{p+1}{p-1}+\frac{n}{2}}\geq 1, \quad\text{in}~(-2, -1).\]
Therefore, there exists a universal constant $C$ such that
\[\int_{|y|\leq\lambda}|w|^{p+1}dy\leq C\lambda^{n}.\]
By this estimate, we calculate
\[\begin{aligned}
\int_{\mathbb{R}^{n}}|w(y)|^{p+1}(4\pi )^{-\frac{n}{2}}e^{-\frac{|y|^{2}}{4}}dy=&\sum_{j=0}^{\infty}\int_{j\leq |y|\leq j+1}|w(y)|^{p+1}(4\pi )^{-\frac{n}{2}}e^{-\frac{|y|^{2}}{4}}dy\\
\leq&\sum_{j=0}^{\infty}C(j+1)^{n}e^{-\frac{j^{2}}{4}} \leq C.
\end{aligned}\]
The integral estimate on $\nabla w$ can be got in the same way.
\end{proof}
As a consequence of Lemma \ref{Monotonicity formula}, Proposition \ref{prop blow-down limit} and Lemma \ref{pre-estimate}, we can show
\begin{coro}\label{coro 3.7}
    There exists a universal constant $C$ such that for any $u\in\mathcal{F}_M$, $(x,t)\in\R^n\times(-\infty,0]$ and $s>0$, 
    \begin{equation}\label{bound on monotonicity formula}
        0\leq E(s;x,t,u)\leq\bar{E}(s;x,t,u)\leq C.
    \end{equation}
\end{coro}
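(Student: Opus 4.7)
My plan is first to handle the inequality $E \leq \bar E$ directly from monotonicity, and then to reduce the remaining two bounds to a single computation on backward self-similar profiles. For the immediate step I invoke Lemma \ref{Monotonicity formula}: since $E(\cdot; x, t, u)$ is non-decreasing, $E(s) \leq E(\tau)$ for every $\tau \in [s, 2s]$, so averaging gives $E(s) \leq \bar E(s)$. The change of variable $\tau = s\sigma$ also shows $\bar E(s) = \int_1^2 E(s\sigma)\,d\sigma$ is itself non-decreasing in $s$, so both $E(s)$ and $\bar E(s)$ possess monotone limits as $s \to 0^+$ and $s \to +\infty$.

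The heart of the proof is to establish that the Giga--Kohn energy
\[
E_{GK}(w) := \int_{\R^n}\Bigl[\tfrac{1}{2}|\nabla w|^2 + \tfrac{1}{2(p-1)}w^2 - \tfrac{1}{p+1}|w|^{p+1}\Bigr]\rho\, dy
\]
of any $w \in \mathcal{S}_M$ lies in $[0, C]$ for a universal constant $C$. For this I test \eqref{SC1} against $w\rho$, which is admissible because $w \in H^1_w(\R^n) \cap L^{p+1}_w(\R^n)$ by Lemma \ref{pre-estimate}, and use $\nabla \rho = -\tfrac{y}{2}\rho$ so that the drift term exactly absorbs the contribution produced by integrating the Laplacian by parts. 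This yields the Pohozaev-type identity
\[
\int_{\R^n}|\nabla w|^2\rho\, dy + \frac{1}{p-1}\int_{\R^n} w^2 \rho\, dy = \int_{\R^n} |w|^{p+1}\rho\, dy,
\]
which collapses $E_{GK}(w)$ to $\bigl(\tfrac{1}{2} - \tfrac{1}{p+1}\bigr)\int_{\R^n}|w|^{p+1}\rho\,dy$, manifestly nonnegative and uniformly bounded thanks to Lemma \ref{pre-estimate}. A direct scaling computation moreover shows that $E(s; 0, 0, u_w) = E_{GK}(w)$ for every $s>0$ on the associated self-similar ancient solution $u_w(x, t) := (-t)^{-1/(p-1)}w(x/\sqrt{-t})$.

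Finally, I tie both extremes of $E(s)$ to $E_{GK}$ via blow-down and blow-up. By monotonicity, $\bar E(s; x, t, u) \leq \lim_{s\to\infty}\bar E(s; x, t, u)$; Proposition \ref{prop blowing down limit} applied to the sequence $u_{x, t, \sqrt s}$, together with the continuity of $\bar E$ in the topology of Proposition \ref{prop compactness} (since it is defined by space-time integrals), identifies this limit with $E_{GK}(w_\infty) \leq C$ for some $w_\infty \in \mathcal{S}_M$. For the lower bound, $E(s; x, t, u) \geq \lim_{\sigma\to 0^+}E(\sigma; x, t, u)$, and I run the parallel argument for the blow-up sequence $u_{x,t,\sqrt\sigma}$ as $\sigma \to 0^+$. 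The main obstacle is that Proposition \ref{prop blowing down limit} is stated only for $\lambda_k \to \infty$, so its blow-up analog needs a separate (but parallel) verification: Proposition \ref{prop compactness} still provides a subsequential limit $u_*$, and the scaling identity $\bar E(\sigma; 0, 0, u_{x, t, \sqrt{\sigma_k}}) = \bar E(\sigma_k \sigma; x, t, u)$ together with monotonicity shows that $\bar E(\cdot; 0, 0, u_*)$ is constant in scale, which by the Giga--Kohn derivative formula forces $u_*$ to be backward self-similar. With this in place the limit equals $E_{GK}(w_*) \geq 0$ and the bounds $0 \leq E \leq \bar E \leq C$ are complete.
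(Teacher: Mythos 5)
Your proof is correct, and it takes a genuinely more self-contained route than the paper's. The paper disposes of the lower bound $E\geq 0$ by citing \cite[Proposition 2.1]{Merle-Zaag2000} (alternatively, it sketches an $\varepsilon$-regularity argument: $E(s)\to 0$ as $s\to 0^+$ at regular points, and $\bar E(s)\geq \varepsilon_0$ at singular points, whence $E(2\sigma)\geq \bar E(\sigma)\geq \varepsilon_0$ there), and leaves the upper bound to be read off from Lemma \ref{Monotonicity formula}, Proposition \ref{prop blowing down limit} and Lemma \ref{pre-estimate}. You instead derive a single Pohozaev-type identity for $w\in\mathcal{S}_M$, namely
\[
\int_{\R^n}|\nabla w|^2\rho + \frac{1}{p-1}\int_{\R^n} w^2\rho = \int_{\R^n}|w|^{p+1}\rho,
\]
so that $E_{GK}(w) = \left(\tfrac12-\tfrac{1}{p+1}\right)\int|w|^{p+1}\rho \in [0,C]$, and then funnel both ends of the monotone quantity into this: the upper bound via blow-down (identical in spirit to the paper), and the lower bound via a parallel blow-up argument with the extra rigidity step that constancy of $\bar E$ forces the blow-up limit to be self-similar. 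This buys a proof that does not invoke Merle--Zaag's external nonnegativity result and makes the constant $C$ explicit. The price is that you must verify two things the paper gets for free: (i) the Pohozaev identity is legitimate in the class $\mathcal{S}_M$ of possibly non-smooth self-similar profiles — this is fine, since $w\in H^1_w\cap L^{p+1}_w$ by Lemma \ref{pre-estimate} and a spatial cutoff argument with the tail control of Lemma \ref{control outside region} passes to the limit; the paper itself invokes a Pohozaev identity for $\mathcal{S}_M$ in a footnote in the proof of Theorem \ref{maintheorem3}; and (ii) that the blow-up (as opposed to blow-down) compactness and self-similarity-from-constant-energy arguments go through — they do, since $u_{x,t,\lambda}$ stays in $\mathcal{F}_M$ for $\lambda\to 0^+$ and the Giga--Kohn derivative identity \eqref{energy idenity} applies in the weak setting, but you are right to flag this as requiring a separate (if parallel) check, since Proposition \ref{prop blowing down limit} is stated only for $\lambda_k\to\infty$. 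One small remark: at a regular point the blow-up limit is $u_*\equiv 0$ rather than a nontrivial self-similar profile, but $E_{GK}(0)=0$ so your conclusion still holds uniformly.
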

In \eqref{bound on monotonicity formula}, the first inequality is    a   consequence of \cite[Proposition 2.1]{Merle-Zaag2000}. In fact, by the $\varepsilon$-regularity (Proposition \ref{epsilonregularity}), if $(x,t)$ is a regular point, then 
\[\lim_{s\to0^+}E(s;x,t,u)=0,\]
while if $(x,t)$ is a singular point, then 
\[E(s;x,t,u)\geq \varepsilon_0, \quad \forall s>0.\]
\begin{lem}\label{control outside region}
Given $0<\delta<1/16$ and $\epsilon>0$, there exists a constant $R$ depending only on $n,M, p,\delta, \epsilon$  such that if $u\in\mathcal{F}_{M}$, then
\begin{equation}\label{control outside region1}
\int_{-1-\frac{\delta}{2}}^{-1+\frac{\delta}{2}}\int_{\mathbb{R}^{n}\backslash B_{R}}(-t)^{\frac{p+1}{p-1}}|u|^{p+1}G(x, -t)dxdt<\epsilon.
\end{equation}
\end{lem}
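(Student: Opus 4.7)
The plan is to extract the estimate directly from the scale-invariant Morrey bound \eqref{Morreyeatimates} combined with the Gaussian decay of the weight. Throughout the slab $[-1-\delta/2,-1+\delta/2]$ the prefactor $(-t)^{(p+1)/(p-1)}$ is uniformly bounded by a universal constant, so the only enemy is the spatial integral $\int |u|^{p+1}G(x,-t)\,dx$, which we expect to control via local $L^{p+1}$ estimates times a Gaussian tail.

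The first step is to choose an admissible base point and a radius for the Morrey estimate whose backward cylinder captures the time slab $(-1-\delta/2,-1+\delta/2)$. Since $\delta<1/16$, this slab lies inside $(-33/32,-31/32)$. Taking $r=1/4$ and $t_0=-29/32\leq 0$ gives
\[
Q_{1/4}^-(x_0,t_0-r^2)=B_{1/4}(x_0)\times(-33/32,-31/32),
\]
so $(x_0,t_0)$ is admissible in \eqref{Morreyeatimates} for every $x_0\in\mathbb{R}^n$, yielding a universal bound
\[
\int_{-1-\delta/2}^{-1+\delta/2}\int_{B_{1/4}(x_0)}|u|^{p+1}\,dx\,dt\leq C(n,p,M).
\]

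The second step is a covering argument. I would decompose $\mathbb{R}^n\setminus B_R$ into annuli $A_j=\{R+j\leq|x|<R+j+1\}$ for $j\geq0$, each of which can be covered by at most $C(n)(R+j)^{n-1}$ balls of radius $1/4$. Because $-t\in[15/16,17/16]$, the heat kernel satisfies $G(x,-t)\leq C(n)e^{-(R+j)^2/5}$ on $A_j$ once $j$ is large enough. Combining this pointwise bound with the local $L^{p+1}$ estimate above and summing over the covering balls of $A_j$ gives
\[
\int_{-1-\delta/2}^{-1+\delta/2}\int_{A_j}(-t)^{\frac{p+1}{p-1}}|u|^{p+1}G(x,-t)\,dx\,dt\leq C(n,p,M)(R+j)^{n-1}e^{-(R+j)^2/5}.
\]

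Finally, summing in $j\geq0$ produces a Gaussian tail that tends to $0$ as $R\to\infty$, so choosing $R=R(n,p,M,\delta,\epsilon)$ large enough makes the total sum smaller than $\epsilon$. There is no serious obstacle: this is essentially the same local-estimate-times-Gaussian-tail argument appearing in the proof of Lemma \ref{pre-estimate}. The only delicate point is the initial choice of $(r,t_0)$, dictated by the constraint $t_0\leq 0$ in the definition of $\mathcal{F}_M$, which forces us to use a smaller scale $r<1$ to reach a cylinder containing the prescribed time slab around $t=-1$; the smallness condition $\delta<1/16$ is exactly what makes the choice $r=1/4$ admissible.
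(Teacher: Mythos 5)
Your proposal is correct and follows the same approach the paper indicates by deferring to the proof of Lemma~\ref{pre-estimate}: a scale-invariant Morrey bound on small backward cylinders, an annular covering, and Gaussian decay of the weight. The one genuinely new detail here — which you identify correctly — is that the constraint $t_0\leq 0$ in \eqref{Morreyeatimates} forces a sub-unit radius to reach a cylinder straddling the slab around $t=-1$, and your choice $r=1/4$, $t_0=-29/32$, yielding $Q_{1/4}^-(x_0,t_0-r^2)=B_{1/4}(x_0)\times(-33/32,-31/32)\supset (-1-\delta/2,-1+\delta/2)$, is exactly what the hypothesis $\delta<1/16$ makes admissible.
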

This lemma is also a consequence of the Morrey estimate in \eqref{Morreyeatimates}. Since the detail is similar to the proof of Lemma \ref{pre-estimate}, we will omit it.

 \section{A characterizations of the constant solutions}
	\setcounter{equation}{0}
 
Recall that $\pm\kappa$ and $0$ are the only   constant solutions of \eqref{SC1}. For any  solution of \eqref{SC1}, define
\begin{equation}\label{definefunctioneta}
\Lambda(w)(y)=\frac{2}{p-1}w(y)+ y\cdot\nabla w(y).
\end{equation}
The constant solutions have the following characterization (see \cite[Proposition 5.1]{Wang-Wei-Wu}).
\begin{prop}\label{ChCS}
For any bounded solution  $w$ of \eqref{SC1},  it is a constant  if and only if  $\Lambda(w)$ 
does not change sign in $\mathbb{R}^{n}$.
\end{prop}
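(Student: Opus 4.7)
The ``only if'' direction is trivial: if $w$ is constant, $\Lambda(w)=\tfrac{2w}{p-1}$ is a constant and hence never changes sign. For the converse, I will assume without loss of generality that $\Lambda(w)\ge 0$ (the other case follows by the symmetry $w\mapsto -w$). The first step is to introduce the scaling family $v_{\tau}(y):=\tau^{2/(p-1)}w(\tau y)$, for which a direct computation gives $\partial_{\tau}v_{\tau}(y)=\tau^{2/(p-1)-1}\Lambda(w)(\tau y)\ge 0$. So $v_{\tau}$ is monotone non-decreasing in $\tau$, and the boundedness of $w$ forces $v_{\tau}\to 0$ pointwise as $\tau\to 0^{+}$. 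Hence $w=v_{1}\ge 0$. If $w\equiv 0$ we are done; otherwise, rewriting \eqref{SC1} as a linear elliptic equation with bounded zero-order coefficient $\tfrac{1}{p-1}-w^{p-1}$ and applying the strong maximum principle gives $w>0$ on $\mathbb{R}^{n}$.

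Next I would exploit that $P:=\Lambda(w)$ solves a linearized equation: differentiating \eqref{SC1} along the scaling $y\mapsto\tau y$ at $\tau=1$, one obtains
\[
-\Delta P+\tfrac{y}{2}\!\cdot\!\nabla P+\tfrac{p}{p-1}P-pw^{p-1}P=0,
\]
that is, $P$ is an eigenfunction of the linearization of \eqref{SC1} at $w$ with eigenvalue $-1$. The strong maximum principle applied to this equation (absorbing the bounded term $-pw^{p-1}$ into the zero-order part as needed) gives either $P\equiv 0$ or $P>0$ on $\mathbb{R}^{n}$. The first alternative forces $v_\tau\equiv v_1$ for all $\tau$, making $w$ homogeneous of degree $-2/(p-1)$, which is incompatible with $w$ being bounded and positive at the origin; hence $P>0$ throughout.

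To conclude that $w$ is constant, I pair the two equations via Gaussian-weighted integration by parts. Subtracting $w\cdot(\text{eqn for }P)$ from $P\cdot(\text{eqn for }w)$, the flux $F:=w\nabla P-P\nabla w$ satisfies $\operatorname{div}(\rho F)=(p-1)\,w P(w^{p-1}-\kappa^{p-1})\rho$, so integration yields
\[
\int_{\mathbb{R}^{n}} w P(w^{p-1}-\kappa^{p-1})\rho\,dy=0,
\]
while a Picone-type computation (testing the $P$-equation against $(P/w)w^{2}\rho$ and using \eqref{SC1} to eliminate $\Delta w$) gives the identity
\[
\int_{\mathbb{R}^{n}}w^{2}\bigl|\nabla(P/w)\bigr|^{2}\rho\,dy=(p-1)\int_{\mathbb{R}^{n}}P^{2}(w^{p-1}-\kappa^{p-1})\rho\,dy.
\]
Combining these with the parabolic monotonicity $\partial_t u=\tfrac{1}{2}(-t)^{-1/(p-1)-1}\Lambda(w)\ge 0$ of the associated backward self-similar solution $u(x,t)=(-t)^{-1/(p-1)}w(x/\sqrt{-t})$, together with Giga--Kohn's energy monotonicity along the (RF)-trajectory $W(y,s)=v_{\beta(s)}(y)$ (with $\beta$ decreasing from $1$ to $0$) connecting $w$ to the trivial solution, I expect to force $\int w^{2}|\nabla(P/w)|^{2}\rho\,dy=0$, i.e., $P/w\equiv c$ for some constant $c$. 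Once this is known, $\Lambda(w)=cw$ reduces to the Euler-type relation $y\!\cdot\!\nabla w=(c-\tfrac{2}{p-1})w$, so $w$ is homogeneous of degree $d:=c-\tfrac{2}{p-1}$; boundedness of $w$ at both $0$ and $\infty$ forces $d=0$, whence $w$ depends only on $y/|y|$, and substituting back into \eqref{SC1} reduces it to $\Delta_{\mathbb{S}^{n-1}}f=|y|^{2}[\tfrac{f}{p-1}-f^{p}]$, whose right-hand side depends on $|y|$ while the left does not, restricting $f$ to the constants $\{0,\pm\kappa\}$. The main obstacle will be rigorously closing the chain of identities to show $P/w$ is constant when $w-\kappa$ takes both signs; this step is analogous to rigidity results for gradient solitons in geometric flows and likely requires the full parabolic monotonicity in conjunction with the elliptic identities rather than elliptic estimates alone.
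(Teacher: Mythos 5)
The paper does not prove Proposition~\ref{ChCS} here; it cites \cite[Proposition 5.1]{Wang-Wei-Wu}, so I cannot compare to that argument directly, but I can assess your proposal on its own merits.

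The first part of your argument is sound. The ``only if'' direction is indeed immediate. Assuming $\Lambda(w)\ge 0$, the computation $\partial_\tau v_\tau(y)=\tau^{2/(p-1)-1}\Lambda(w)(\tau y)\ge 0$ together with $v_\tau\to 0$ as $\tau\to 0^{+}$ correctly gives $w\ge 0$; the strong maximum principle then gives $w>0$ unless $w\equiv 0$. The equation you write for $P=\Lambda(w)$ agrees with \eqref{pro2.2} of the paper, and the alternative $P\equiv 0$ does force $\Lambda(w)\equiv 0$, hence $w(r\theta)=r^{-2/(p-1)}w(\theta)$, which is incompatible with boundedness at the origin unless $w\equiv 0$; so $P>0$. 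I also checked your two integral identities and they are correct (up to an immaterial overall sign in the flux computation): $\operatorname{div}\bigl(\rho(w\nabla P-P\nabla w)\bigr)=-(p-1)wP(w^{p-1}-\kappa^{p-1})\rho$ gives $\int wP(w^{p-1}-\kappa^{p-1})\rho=0$, and the Picone identity $\int w^{2}|\nabla(P/w)|^{2}\rho=(p-1)\int P^{2}(w^{p-1}-\kappa^{p-1})\rho$ follows by the computation you indicate.

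The proof is not complete, however, and you say so yourself (``I expect to force\dots'', ``the main obstacle will be\dots''). This is a genuine gap, not a minor omission. The two identities above hold \emph{identically} for every smooth positive solution of \eqref{SC1} with $\Lambda(w)>0$; by themselves they impose no rigidity, since each side can be positive. Nor does the appeal to Giga--Kohn monotonicity along the rescaling trajectory $W(y,s)=v_{\beta(s)}(y)$ (with $\beta'=\tfrac{1}{2}\beta(\beta^{2}-1)$) close the argument: running the monotonicity formula along this orbit merely yields the energy identity $E(w)=\int_{-\infty}^{\infty}\!\int_{\R^{n}}|\partial_s W|^{2}\rho\,dy\,ds$, which again is satisfied by any such $w$ and does not force $\int w^{2}|\nabla(P/w)|^{2}\rho=0$. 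So the crucial step---showing that the positivity of $P$ combined with these balanced identities forces $P/w$ to be constant---is missing. A secondary point you glossed over is justifying that the boundary terms in the weighted integration by parts for the Picone identity actually vanish: $q=P/w$ and $\nabla q$ may grow as $w\to 0$ at infinity, and while this is harmless if $w$ decays only polynomially, this needs to be said.
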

%\begin{rmk}
%In a recent paper \cite{Choi-Huang2024}, Choi and Huang proved a similar result for smooth linearly stable self-similar solutions \footnote{The precise definition of linearly stable self-similar solution can be found in \cite[Definition 2.1]{Choi-Huang2024}.} of the equation \eqref{eqn} under an integral condition for all $p>1$.
%\end{rmk}
The constant solutions can also be characterized by their energy. Denote
\[\mathcal{B}_{n}=\{w: w~\text{is a nonzero bounded solution of}~\eqref{SC1}\}.\]
Since $\kappa\in\mathcal{B}_n$, $\mathcal{B}_n\neq\emptyset$. The main result of \cite{Wang-Wei-Wu} gives the following characterization of the constant solutions $\pm\kappa$ in $\mathcal{B}_n$.
\begin{thm}\label{ME}
If $w\in\mathcal{B}_{n}$ and $w\neq\pm\kappa$, then
\[E(w)>E(\kappa).\]
\end{thm}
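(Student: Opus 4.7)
The plan is first to reduce the claim to a sharper $L^{p+1}_\rho$-norm inequality. Testing \eqref{SC1} against $w\rho$ and integrating by parts (via $-\mathrm{div}(\rho\nabla w)=\rho(-\Delta w+\tfrac{y}{2}\cdot\nabla w)$) gives the weighted Nehari identity
\[\int_{\mathbb{R}^n}|\nabla w|^2\rho\,dy+\frac{1}{p-1}\int_{\mathbb{R}^n}w^2\rho\,dy=\int_{\mathbb{R}^n}|w|^{p+1}\rho\,dy,\qquad w\in\mathcal{B}_n.\]
Substituting this back into \eqref{energy} collapses the energy of every solution to $E(w)=\tfrac{p-1}{2(p+1)}\int|w|^{p+1}\rho\,dy$, so in particular $E(\kappa)=\tfrac{p-1}{2(p+1)}\kappa^{p+1}$, and the theorem reduces to the strict bound
\[\int_{\mathbb{R}^n}|w|^{p+1}\rho\,dy>\kappa^{p+1}\qquad\text{for every }w\in\mathcal{B}_n\setminus\{\pm\kappa\}.\]

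The plan is to argue by contradiction using an instability/energy-descent scheme. Let $w\in\mathcal{B}_n\setminus\{\pm\kappa\}$ violate the reduced inequality. Since the only constant members of $\mathcal{B}_n$ are $\pm\kappa$, such a $w$ is non-constant, so by Proposition \ref{ChCS} the scaling field $\Lambda(w)=\tfrac{2}{p-1}w+y\cdot\nabla w$ changes sign and in particular $\Lambda(w)\not\equiv 0$. Differentiating the residual of \eqref{SC1} at the scaling family $w_\lambda(y):=\lambda^{2/(p-1)}w(\lambda y)$ in $\lambda$ at $\lambda=1$ gives the eigenfunction identity
\[\mathcal{L}_w\Lambda(w)=-\Lambda(w),\qquad\mathcal{L}_w\phi:=-\Delta\phi+\tfrac{y}{2}\cdot\nabla\phi+\tfrac{1}{p-1}\phi-p|w|^{p-1}\phi,\]
and hence the second variation in the direction $\Lambda(w)$ is strictly negative: $\delta^2 E(w)(\Lambda(w),\Lambda(w))=-\|\Lambda(w)\|_{L^2_\rho}^2<0$. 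The natural next step is to run the self-similar flow \eqref{self-similar eqn}, which is the $L^2_\rho$-gradient flow of $E$, from the perturbed initial datum $w+\varepsilon\Lambda(w)$ for small $\varepsilon>0$: the negative eigendirection makes $E$ strictly decrease at $s=0$, while the uniform $H^1_w\cap L^{p+1}_w$ bounds from Lemma \ref{pre-estimate} together with parabolic regularity for \eqref{self-similar eqn} allow one to extract a subsequential limit $w_\infty$ as $s\to+\infty$ that is itself a steady state of \eqref{SC1} with $E(w_\infty)<E(\kappa)$.

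The remaining step, and the heart of the proof, is to rule out such $w_\infty$. In the supercritical regime \eqref{SC1} admits a rich family of nontrivial bounded solutions, so $w_\infty\in\{0,\pm\kappa\}$ is not automatic. The plan is an inductive descent: among all nonzero bounded solutions with $E\le E(\kappa)$ (assumed nonempty for contradiction), pick an energy minimizer using the compactness inherited from the Gaussian bounds of Lemma \ref{pre-estimate} together with the Morrey control \eqref{Morreyeatimates}; applying the descent scheme above to this minimizer produces a nonzero solution of strictly smaller energy, contradicting minimality unless the minimizer is $\pm\kappa$. The trivial limit $w_\infty=0$ is excluded by the $\varepsilon$-regularity lower bound $E(w)\ge\varepsilon_0$ along any nontrivial orbit. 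The technically delicate point, and what I expect to be the hardest obstacle, will be ensuring that the long-time subsequential limit is a \emph{genuine} bounded stationary solution rather than a defect at infinity or a degeneration along the unstable manifold; this should be handled by a Merle--Zaag-type trichotomy controlling the projections of the flow onto the finitely many non-positive eigenspaces of the stability operator at the purported limit, in the spirit of Colding--Ilmanen--Minicozzi's rigidity theorem for cylindrical self-shrinkers.
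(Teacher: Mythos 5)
Your reduction to the weighted Nehari identity, the resulting formula $E(w)=\tfrac{p-1}{2(p+1)}\int|w|^{p+1}\rho$, and the eigenfunction identity $\mathcal{L}_w\Lambda(w)=-\Lambda(w)$ (this last one is exactly equation \eqref{pro2.2} in Appendix~A) are all correct, and the observation that $\Lambda(w)$ furnishes a strict instability direction for any non-constant $w$ is sound. Note, however, that the present paper does not prove Theorem~\ref{ME}: it is imported wholesale from \cite{Wang-Wei-Wu}, so there is no internal proof to compare against. What can be said is that the second half of your proposal has a genuine gap that would stop the argument.

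The descent/minimization scheme is not a proof. After perturbing to $w+\varepsilon\Lambda(w)$ and noting $E$ drops below $E(w)\le E(\kappa)$, you still need to produce a \emph{nonzero} bounded steady state $w_\infty$ of \eqref{SC1} with $E(w_\infty)<E(\kappa)$, and there is no mechanism in the proposal that achieves this. Three things can go wrong. First, $0$ is a solution of \eqref{SC1} with $E(0)=0$ and is a dynamically stable rest point of \eqref{self-similar eqn}; a perturbed orbit can converge to $0$, and your appeal to $\varepsilon$-regularity does not exclude this — Proposition~\ref{epsilonregularity} bounds local energy from below \emph{at singular points}, not along arbitrary orbits of the rescaled flow, and an orbit converging to $0$ corresponds to a Fujita solution with no singularity at the base point, which is perfectly consistent. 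Second, the rescaled flow is a superlinear parabolic equation and the perturbed orbit can blow up in finite $s$, which the proposal does not address. Third, the ``minimize $E$ over nonzero bounded solutions with $E\le E(\kappa)$'' step has no compactness behind it: Lemma~\ref{pre-estimate} and the Morrey bound give uniform weighted $H^1\cap L^{p+1}$ control, but a minimizing sequence of bounded solutions may lose its $L^\infty$ bound, degenerate to $0$, or leak Gaussian-weighted mass to infinity, and nothing forces the limit to be a nonzero bounded solution. These are precisely the issues that make the analogous ``sphere minimizes entropy'' statement in mean curvature flow (Colding--Ilmanen--Minicozzi--White, Bernstein--Wang, Zhu) a hard theorem rather than a corollary of the Colding--Minicozzi instability mechanism; invoking a Merle--Zaag trichotomy at the end does not supply the missing compactness or the exclusion of the trivial limit. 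As written, the proposal identifies the right starting points but does not close the argument.
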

\begin{rmk}
    We believe this characterization for constant solutions should hold among the larger class $\mathcal{S}_M$, not only bounded solutions. This is Conjecture 1 in \cite[Section 12]{Wang-Wei-Wu}, see also \cite[Theorem 1.7]{Wang-Wei-Wu} for partial results. If this characterization is true, then Theorem \ref{maintheorem1} will follow rather directly. On the other hand, notice that Theorem \ref{maintheorem1} implies that there is no non-constant connecting orbit of \eqref{self-similar eqn} starting from $\kappa$ (that is, a solution $w(y, s)$ satisfying $\lim_{s\to-\infty}w(y,s)=\kappa$). 
\end{rmk}

\section{Rigidity of the constant solutions}
	\setcounter{equation}{0}

In this section, we prove two  rigidity results for the constant solutions $\pm\kappa$.  The first rigidity theorem says that $\kappa$ is isolated among all solutions to \eqref{SC1} with respect to the $L_{w}^{2}(\mathbb{R}^{n})$ distance.
\begin{thm}\label{thm rigidity 1}
There exists a positive constant $\varepsilon_1$ such that if $w\in\mathcal{S}_{M}$ and
\begin{equation}\label{L2 close}
\|w-\kappa\|_{L^2_w(\R^n)}\leq \varepsilon_1,
\end{equation}
then $w\equiv \kappa$.
\end{thm}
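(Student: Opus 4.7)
My plan is a proof by contradiction, combining the $L^2_w$-spectral theory of the Jacobi operator at $\kappa$ with a Lyapunov--Schmidt second-order obstruction. Suppose $w_k \in \mathcal{S}_M$ satisfies $w_k \neq \kappa$ and $\epsilon_k := \|w_k - \kappa\|_{L^2_w} \to 0$; set $v_k := w_k - \kappa$ and $\hat v_k := v_k/\epsilon_k$, so $\|\hat v_k\|_{L^2_w} = 1$. First I would associate the ancient solutions $u_k(x,t) := (-t)^{-1/(p-1)}w_k(x/\sqrt{-t}) \in \mathcal{F}_M$; by Proposition \ref{prop compactness} together with the $L^2_w$-hypothesis, any subsequential limit is the ODE solution $\kappa(-t)^{-1/(p-1)}$. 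Since this limit is smooth, the $\varepsilon$-regularity theorem (Proposition \ref{epsilonregularity}) upgrades the convergence to $C^\infty_{loc}$, so $w_k \to \kappa$ in $C^\infty_{loc}(\mathbb{R}^n)$.

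Because $\kappa$ solves \eqref{SC1} with $\kappa^{p-1} = 1/(p-1)$, the perturbation satisfies $Lv_k = N(v_k)$, where
\begin{equation*}
L := -\Delta + \tfrac{y}{2}\cdot\nabla - 1, \qquad N(v) := |\kappa+v|^{p-1}(\kappa+v) - \kappa^p - p\kappa^{p-1}v = \tfrac{p(p-1)\kappa^{p-2}}{2} v^2 + O(|v|^3).
\end{equation*}
The operator $L$ is self-adjoint on $L^2_w$ with compact resolvent (a shifted Ornstein--Uhlenbeck operator); its spectrum is $\{k/2 - 1 : k \geq 0\}$, and its kernel consists of the degree-$2$ Hermite polynomials,
\begin{equation*}
\ker L = \bigl\{y^{\top} A y - 2\operatorname{tr}(A) : A \in \operatorname{Sym}_n(\mathbb{R})\bigr\}.
\end{equation*}
I next decompose $\hat v_k = a_k + b_k$ orthogonally along $\ker L \oplus (\ker L)^\perp$. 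Invertibility of $L$ on $(\ker L)^\perp$ combined with $\|L\hat v_k\|_{L^2_w} \to 0$ would yield $b_k \to 0$ in $L^2_w$, and finite-dimensionality of $\ker L$ would then give, along a subsequence, $a_k \to \hat v_\infty \in \ker L$ with $\|\hat v_\infty\|_{L^2_w} = 1$.

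To close the argument, I would exploit the Fredholm solvability condition. For any $\phi \in \ker L$, self-adjointness of $L$ gives $\int v_k (L\phi)\rho\,dy = \int (Lv_k)\phi\,\rho\,dy$, and $L\phi = 0$ forces $\int N(v_k)\phi\,\rho\,dy = 0$. Substituting the Taylor expansion of $N$ and dividing by $\epsilon_k^2$ yields, in the limit,
\begin{equation*}
\int_{\mathbb{R}^n} \hat v_\infty^{2}\, \phi\, \rho\, dy = 0 \qquad \text{for every } \phi \in \ker L.
\end{equation*}
Writing $\hat v_\infty(y) = y^{\top}A y - 2\operatorname{tr}(A)$ and $\phi(y) = y^{\top}B y - 2\operatorname{tr}(B)$, Isserlis' theorem applied to $\rho$ (the density of $\mathcal{N}(0, 2I_n)$) gives the explicit identity $\int \hat v_\infty^{2}\,\phi\,\rho\,dy = 64\operatorname{tr}(A^2 B)$. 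Vanishing for every symmetric $B$ forces $A^2 = 0$, and symmetry of $A$ then forces $A = 0$; hence $\hat v_\infty \equiv 0$, contradicting $\|\hat v_\infty\|_{L^2_w} = 1$.

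The main obstacle will be the $L^2_w$-compactness step: ruling out loss of mass at infinity for the rescaled sequence $\hat v_k$. Naive interpolation between $L^2_w$ and $L^{p+1}_w$ (using Lemma \ref{pre-estimate}) does not by itself bound the critical quantity $\|L\hat v_k\|_{L^2_w} = O(\epsilon_k \|\hat v_k\|_{L^4_w}^2)$, since $C^\infty_{loc}$-convergence of $w_k$ permits the rescaled sequence to concentrate outside large balls. Closing this gap will require combining the spectral structure of $L$ with the Morrey control on $u_k \in \mathcal{F}_M$ and, if necessary, a concentration-compactness dichotomy that either produces a genuine kernel limit or rescales around a blow-up profile at infinity to derive a different contradiction.
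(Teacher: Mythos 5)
Your approach diverges from the paper's after the shared first step (upgrading $\|w_k-\kappa\|_{L^2_w}\to 0$ to $C^\infty_{loc}$ convergence, which is exactly the paper's Lemma~\ref{prou-lem1}), and the gap you flag at the end is a genuine one. The Lyapunov--Schmidt mechanism itself is sound: the Jacobi operator $L=-\Delta+\tfrac{y}{2}\cdot\nabla-1$ is correctly identified, its kernel is indeed the degree-two Hermite polynomials $\{y^\top A y-2\operatorname{tr} A\}$, and the Isserlis computation $\int(y^\top Ay-2\operatorname{tr} A)^2(y^\top By-2\operatorname{tr} B)\rho\,dy=64\operatorname{tr}(A^2 B)$ checks out, giving $A^2=0\Rightarrow A=0$ by symmetry. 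The second-order Fredholm obstruction would therefore close the argument \emph{if} one knew that $\hat v_k=v_k/\epsilon_k$ were precompact in $L^2_w$, that $\|L\hat v_k\|_{L^2_w}\to 0$, and that $\epsilon_k^{-2}\int N(v_k)\phi\,\rho\to\tfrac{p(p-1)\kappa^{p-2}}{2}\int\hat v_\infty^2\phi\,\rho$. None of these follow from the available information. The $C^\infty_{loc}$ convergence controls $v_k$ only on compact sets; at infinity you only have the Morrey bound, i.e.\ $v_k\in L^{p+1}_w$, and $N(v_k)\sim|v_k|^p$ in the region where $v_k$ is not small. So $\|N(v_k)\|_{L^2_w}$ demands $v_k\in L^{2p}_w$, which you do not have ($2p>p+1$), and the rescaled nonlinearity may fail to be $o(\epsilon_k)$ before any cancellation is extracted. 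The same lack of decay prevents you from ruling out escape of the unit $L^2_w$-mass of $\hat v_k$ to infinity, so even ``$\hat v_\infty\in\ker L$, $\|\hat v_\infty\|=1$'' is unjustified. Finally, passing $\epsilon_k^{-2}\int N(v_k)\phi\,\rho$ to the limit with $\phi$ a degree-two polynomial needs uniform control of $\hat v_k^2|y|^2\rho$, again unavailable.

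The paper avoids all of this by not linearizing globally. It first deduces Theorem~\ref{thm rigidity 1} from Theorem~\ref{thm rigidity 2} (rigidity in $\text{dist}_{R_\ast}$) via compactness, and Theorem~\ref{thm rigidity 2} is proved by the Colding--Ilmanen--Minicozzi style iteration in Propositions~\ref{pro2} and~\ref{pro1}: closeness to $\kappa$ on $B_R$ is improved and then \emph{propagated outward} to $B_{(1+\theta)R}$, and in the limit one obtains a globally bounded solution with $\Lambda(w)$ of one sign, to which Proposition~\ref{ChCS} applies. That argument only ever needs pointwise $C^0$/$C^1$ bounds on finite balls (supplied by the Morrey estimate plus $\varepsilon$-regularity), and replaces the kernel/solvability obstruction by the sign rigidity of $\Lambda(w)$ --- which is the structural input your proposal does not use. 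If you want to salvage your route, the missing ingredient is precisely a uniform decay estimate on $|w_k-\kappa|$ at spatial infinity \emph{before} passing to $\hat v_k$; that is essentially what Propositions~\ref{pro2}--\ref{pro1} deliver, so the reduction to pointwise closeness on a large ball seems unavoidable either way.
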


To state the second rigidity theorem, we need a notation.
For any $R>1$, if  $w_{1}, w_{2}$ are two  continuous functions  on $\overline{B_{R}(0)}$,   define
\[\text{dist}_{R}(w_{1}, w_{2})=\sup_{y\in B_{R}(0)}|w_{1}(y)-w_{2}(y)|.\]
The second rigidity result says $\kappa$ is isolated with respect to this distance. We emphasize  that this  rigidity result holds for any $p>1$, that is, it does not need the restriction that $m=2\frac{p+1}{p-1}$ is not an integer.
\begin{thm}\label{thm rigidity 2}
There exist  $\varepsilon_2$ and $R_\ast$ depending only on $n, p, M$ such that if $w\in\mathcal{S}_M$, it is smooth in $\overline{B_{R_\ast}(0)}$ and
\begin{equation}\label{rigidity condition1}
\text{dist}_{R_\ast}(w,\kappa)\leq \varepsilon_2,
\end{equation}
then $w\equiv \kappa$.
\end{thm}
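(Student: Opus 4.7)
The plan is to reduce Theorem \ref{thm rigidity 2} to the $L^2_w$-rigidity Theorem \ref{thm rigidity 1} via a splitting argument that converts the uniform closeness on a large ball into weighted $L^2$ closeness on all of $\mathbb{R}^n$. Once I verify that $\|w-\kappa\|_{L^2_w(\mathbb{R}^n)}\leq \varepsilon_1$, Theorem \ref{thm rigidity 1} immediately concludes $w\equiv \kappa$, so the entire task is to control the tail $\int_{\mathbb{R}^n\setminus B_{R_\ast}}|w-\kappa|^2\rho$ using only the a priori information available to functions in $\mathcal{S}_M$.

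The key step is the decomposition
\[\int_{\mathbb{R}^n} |w-\kappa|^2 \rho \, dy = \int_{B_{R_\ast}} |w-\kappa|^2 \rho \, dy + \int_{\mathbb{R}^n\setminus B_{R_\ast}} |w-\kappa|^2 \rho \, dy.\]
The interior integral is bounded pointwise via the smoothness hypothesis and \eqref{rigidity condition1} by $\varepsilon_2^2 \int_{B_{R_\ast}}\rho\, dy \leq \varepsilon_2^2$. For the exterior integral, I use the elementary bound $|w-\kappa|^2\leq 2w^2+2\kappa^2$ together with Hölder's inequality with conjugate exponents $(p+1)/2$ and $(p+1)/(p-1)$ to obtain
\[\int_{\mathbb{R}^n\setminus B_{R_\ast}} w^2 \rho \, dy \leq \Bigl(\int_{\mathbb{R}^n} |w|^{p+1}\rho \, dy\Bigr)^{\!2/(p+1)} \Bigl(\int_{\mathbb{R}^n\setminus B_{R_\ast}} \rho \, dy\Bigr)^{\!(p-1)/(p+1)}.\]
The first factor on the right hand side is universally bounded by Lemma \ref{pre-estimate}, while the second factor, as well as the companion term $2\kappa^2\int_{\mathbb{R}^n\setminus B_{R_\ast}}\rho\, dy$, decays to zero as $R_\ast\to\infty$ at a rate depending only on $n$.

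To finish, I would first choose $R_\ast$ depending on $n,p,M,\varepsilon_1$ so that the exterior contribution is at most $\varepsilon_1^2/2$, and then select $\varepsilon_2$ depending on this $R_\ast$ so that the interior contribution is at most $\varepsilon_1^2/2$; adding them yields $\|w-\kappa\|_{L^2_w(\mathbb{R}^n)}\leq \varepsilon_1$, and Theorem \ref{thm rigidity 1} closes the argument. I do not anticipate a serious obstacle: the whole argument is a black-box reduction. The smoothness hypothesis on $\overline{B_{R_\ast}(0)}$ enters only so that $\text{dist}_{R_\ast}(w,\kappa)$ is classically well-defined, and outside $B_{R_\ast}$ the argument draws only on the Gaussian integrability provided by Lemma \ref{pre-estimate}, itself a direct consequence of the scale-invariant Morrey estimate \eqref{Morreyeatimates}. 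The only mild subtlety is the ordering of parameter choices: $R_\ast$ must be fixed before $\varepsilon_2$, and once this ordering is respected the constants depend only on $n,p,M$ as required.
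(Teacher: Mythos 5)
Your proposal runs afoul of a circular dependency that is invisible from the statements alone but clear from the paper's proof structure. In the paper, Theorem \ref{thm rigidity 1} is \emph{not} an independent input: it is derived \emph{from} Theorem \ref{thm rigidity 2}. The order of the statements in the text is the reverse of the order of the proofs. The paper first proves Theorem \ref{thm rigidity 2} by an iteration scheme based on Propositions \ref{pro2} and \ref{pro1}, and only afterwards proves Theorem \ref{thm rigidity 1} by a compactness argument (Lemma \ref{prou-lem1}) whose sole purpose is to upgrade $L^2_w$ closeness to $\text{dist}_{R_\ast}$ closeness so that Theorem \ref{thm rigidity 2} can be invoked. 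Your reduction therefore proves ``Theorem \ref{thm rigidity 1} $\Rightarrow$ Theorem \ref{thm rigidity 2}'', which together with the paper's ``Theorem \ref{thm rigidity 2} $\Rightarrow$ Theorem \ref{thm rigidity 1}'' establishes only an equivalence; it does not prove either theorem.

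The missing idea is precisely the substance of the theorem: the Colding--Ilmanen--Minicozzi style iteration. Proposition \ref{pro2} shows that a crude bound on $|w-\kappa|+|\Lambda(w)-\tfrac{2}{p-1}\kappa|$ on $B_R$ improves to a sharper bound on $B_{R-3}$ (using exponential decay of $\nabla w$ and $\nabla\Lambda(w)$ coming from the auxiliary quantity $\tau=w^p/\Lambda(w)$); Proposition \ref{pro1} shows the sharper bound on $B_R$ propagates the crude bound out to $B_{(1+\theta)R}$ with a \emph{fixed} multiplicative gain $\theta>0$ (via the $\varepsilon$-regularity and the time-translation structure of the self-similar ansatz). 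Iterating, the closeness propagates to all of $\mathbb{R}^n$, and then Proposition \ref{ChCS} (the sign condition on $\Lambda(w)$) gives $w\equiv\kappa$. None of this appears in your argument. Your Hölder computation and the interior/exterior split are correct as stated, but they only establish that the two rigidity theorems are equivalent; they supply no independent proof of either. A secondary symptom of the circularity: the paper notes that Theorem \ref{thm rigidity 2} holds for all $p>1$ without the assumption $m\notin\mathbb{Z}$, whereas the compactness underlying Theorem \ref{thm rigidity 1} (Proposition \ref{prop compactness} via Lemma \ref{prou-lem1}) does require $m\notin\mathbb{Z}$; if Theorem \ref{thm rigidity 2} were derivable from Theorem \ref{thm rigidity 1}, this extra generality could not be there.
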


\begin{rmk}
\begin{enumerate}
   % \item A similar characterization holds for $-\kappa$.
    \item A consequence of the $\varepsilon$-regularity result (Proposition \ref{epsilonregularity}) is that  among solutions to \eqref{SC1},   $0$  is isolated. The above theorem says that the other two constant solutions, $\pm\kappa$, are also isolated.
    \item Theorem \ref{thm rigidity 2} says that if a self-similar solution  is close to $\kappa$ in a sufficiently large ball, then it must be $\kappa$ itself. This holds even when the self-similar solution is not presumed to be smooth outside of the ball. This fact is important in applications.
\end{enumerate}
\end{rmk}

The first rigidity result  will follow from the second one together with some compactness properties of $\mathcal{S}_M$. Theorem \ref{thm rigidity 2} is inspired by the corresponding result for self-shrinkers to mean curvature flows in Colding-Ilmanen-Minicozzi  \cite{Colding-I-M2015}.
The proof also follows the iteration arguments developed there. This iteration relies on the following two propositions.
\begin{prop}\label{pro2}
There exists a universal constant $R_{1}$ such  that  for any $w\in\mathcal{S}_M$, if $R>R_{1}$, $w$ is smooth in $\overline{B_R(0)}$ and 
\[\text{dist}_{R}(w,\kappa)+\text{dist}_{R}\left(\Lambda(w), \frac{2}{p-1}\kappa\right)\leq \frac{\kappa}{p+1},\]
 then
\[\text{dist}_{R-3}(w,\kappa)+\text{dist}_{R-3}\left(\Lambda(w), \frac{2}{p-1}\kappa\right)\leq\frac{\kappa}{10(p+1)}.\]
\end{prop}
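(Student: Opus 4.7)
The plan is to linearize around $\kappa$, upgrade the $L^\infty$ hypothesis to uniform interior $C^{2,\alpha}$ regularity, and then run a barrier/maximum-principle argument on the annulus $B_R\setminus B_{R-3}$, finally propagating the improved bound into $B_{R-3}$ by combining a maximum principle for the ``good'' equation with a first-order ODE along radial rays.

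Set $v=w-\kappa$ and $\phi=\Lambda(w)-\tfrac{2\kappa}{p-1}=\Lambda(v)$, so the hypothesis reads $\|v\|_{L^\infty(B_R)}+\|\phi\|_{L^\infty(B_R)}\leq \kappa/(p+1)$, and in particular $w\ge p\kappa/(p+1)>0$ on $B_R$ so $|w|^{p-1}w=w^p$ there. Using the identity $\kappa^{p-1}=1/(p-1)$ and subtracting the constant-solution equation from \eqref{SC1}, $v$ satisfies
\[
-\Delta v+\tfrac{y}{2}\cdot\nabla v-v=Q_1(v),\qquad |Q_1(v)|\le Cv^2.
\]
Applying $\Lambda$ to \eqref{SC1} and exploiting the commutators $[\Lambda,-\Delta]=2\Delta$ and $[\Lambda,\tfrac{y}{2}\cdot\nabla]=0$, together with the original equation to eliminate $\Delta w$, one obtains after a routine computation
\[
L_0\phi-p(w^{p-1}-\kappa^{p-1})\phi=\tfrac{2p\kappa}{p-1}(w^{p-1}-\kappa^{p-1}),\qquad L_0:=-\Delta+\tfrac{y}{2}\cdot\nabla,
\]
which, after Taylor expanding $w^{p-1}-\kappa^{p-1}=v/\kappa+O(v^2)$, reads schematically $L_0\phi=\tfrac{2p}{p-1}v+O(v^2+|v\phi|)$. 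The key structural point is that $L_0$ is nonnegative in the Gaussian-weighted sense (its kernel is the constants), so the $\phi$-equation is the ``good'' equation, whereas the $v$-equation is governed by $L_0-1$, whose spectrum has nonpositive eigenvalues on the first three Hermite modes.

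Next I would upgrade the $L^\infty$ hypothesis to uniform $C^{2,\alpha}$ control on $B_{R-1}$ via a covering argument with unit balls together with standard interior Schauder estimates for drift-elliptic operators, obtaining
\[
\|v\|_{C^{2,\alpha}(B_{R-1})}+\|\phi\|_{C^{2,\alpha}(B_{R-1})}\leq C\bigl(\|v\|_{L^\infty(B_R)}+\|\phi\|_{L^\infty(B_R)}\bigr)
\]
with $C$ independent of $R$ once $R\geq R_1$. Setting $\epsilon:=\kappa/(p+1)$, this in particular shows that $Q_1$ and $Q_2$ are pointwise of size $O(\epsilon^2)$, hence much smaller than $\epsilon$ and absorbable into the principal part.

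The heart of the proof is a barrier argument on the annulus $A:=B_R\setminus B_{R-3}$. Because the drift $\tfrac{y}{2}\cdot\nabla$ has size of order $R$ on $A$, a radial super-solution of the form $\Psi(y)=\exp\bigl(\alpha(|y|^2-(R-3)^2)\bigr)$ with $\alpha$ small but fixed satisfies $L_0\Psi\geq c_0\Psi$ once $R\geq R_1$, and moreover $\Psi|_{\partial B_R}/\Psi|_{\partial B_{R-3}}$ can be made as large as desired by choosing $\alpha$ and $R_1$ appropriately. Comparing $\pm\phi$ and $\pm v$ against suitable multiples of $\Psi$, and using Step~2 to absorb the quadratic remainders $Q_1, Q_2$ into the principal part, then yields the bound $|v|+|\phi|\leq\kappa/(10(p+1))$ on $\partial B_{R-3}$. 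To extend this inside $B_{R-3}$, I would apply the maximum principle to $\phi$ directly (its principal operator $L_0$ is nonnegative), and then recover the pointwise bound on $v$ by solving the first-order linear ODE $\tfrac{2v}{p-1}+y\cdot\nabla v=\phi$ along radial rays from $\partial B_{R-3}$ inward.

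The principal obstacle is Step 3: the linearization $L_0-1$ governing $v$ has three nonpositive eigenvalues, coming from Hermite modes of degree $\leq 2$, so a naive maximum principle for $v$ alone fails. The coupling $\phi=\Lambda(v)$ is essential, because $\Lambda$ multiplies each Hermite mode of degree $k$ by the strictly positive factor $\tfrac{2}{p-1}+k$, so simultaneous smallness of $v$ and $\phi$ suppresses the unstable directions. Implementing this at the level of local barriers, rather than through a global spectral decomposition (which is unavailable on a ball), is the technical crux, and is where the precise interplay between the drift-Laplacian and the radial-ODE propagation must be carefully calibrated to yield the universal factor $1/10$.
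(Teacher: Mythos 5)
Your linearized setup correctly identifies the core difficulty: the operator $L_0-1$ governing $v=w-\kappa$ has nonpositive eigenvalues on Hermite modes of degree $\le 2$, so a barrier or maximum-principle argument for $v$ alone cannot close. But your proposal does not resolve this obstruction — your final paragraph explicitly defers ``implementing this at the level of local barriers'' and ``calibrating the interplay between the drift-Laplacian and the radial-ODE propagation'', which is precisely the step that would have to carry the proof. Concretely, the source term in the $\phi$-equation is $\tfrac{2p}{p-1}v+O(\epsilon^2)$, which a priori is only of size $\epsilon=\kappa/(p+1)$; a barrier estimate for $\phi$ using the hypothesis alone therefore returns a bound of order $\epsilon$, not the improved $\epsilon/10$, so you cannot first close the $\phi$-estimate and then feed it back into $v$. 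A second, independent gap is the radial-ODE step: the homogeneous solution of $\tfrac{2}{p-1}v+y\cdot\nabla v=0$ is $|y|^{-2/(p-1)}$, which blows up as $y\to 0$, so propagating the boundary data on $\partial B_{R-3}$ \emph{inward} along rays diverges near the origin. Some separate mechanism must control the near-origin region, and your proposal has none.

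The paper's proof takes a genuinely different route that avoids linearization altogether. Rather than controlling $v$ and $\phi$ separately, it studies the single quotient $\tau:=w^p/\Lambda(w)$, which is well-defined and positive on $B_R$ under the hypothesis, and computes (Lemma~\ref{formulas}) that $\tau$ satisfies a \emph{divergence-form} equation with a \emph{signed} right-hand side,
\[
\mathrm{div}\bigl(\Lambda(w)^2\rho\,\nabla\tau\bigr)=p(p-1)\,\Lambda(w)\,\rho\,w^{p-2}|\nabla w|^2\ \ge\ 0,
\]
an analogue of the Simons/self-shrinker stability identity from \cite{Colding-I-M2015}. Testing against a cut-off squared gives a Caccioppoli inequality whose boundary term carries the Gaussian weight $\rho$ and hence decays like $e^{-(R-s)^2/4}$; this yields exponentially small weighted integrals of $|\nabla\tau|^2$ and $|\nabla w|^2$ on $B_{R-s}$, which elliptic estimates upgrade to pointwise exponential decay of $|\nabla w|$ and $|\nabla\Lambda(w)|$ on $B_{R-3}$ (Lemma~\ref{lem 4.1}). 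The origin region is handled separately by a soft compactness argument (Lemma~\ref{lempre}): on a fixed ball $B_{5\sqrt n}$ the distance to $\kappa$ can be made arbitrarily small once $R$ is large. One then integrates $\nabla w$ along radial rays \emph{outward} from $\partial B_{5\sqrt n}$ — so there is no singularity at the origin — picking up only an exponentially small contribution. This is the step your ODE argument was reaching for, but run outward from a compactness-controlled inner region rather than inward from a barrier-controlled outer boundary, and it is made possible by the quotient $\tau$ rather than by a coupled $v/\phi$ estimate.
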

\begin{prop}\label{pro1}
There exist two universal constants $R_2$ and $\theta$ such  that  for any $w\in\mathcal{S}_M$, if $R>R_2$, $w$ is smooth in $\overline{B_R(0)}$ and 
\[\text{dist}_{R}(w,\kappa)+\text{dist}_{R}\left(\Lambda(w), \frac{2}{p-1}\kappa\right)\leq \frac{\kappa}{10(p+1)},\]
 then $w$ is smooth in $\overline{B_{(1+\theta)R}(0)}$ and
\[\text{dist}_{(1+\theta)R}(w,\kappa)+\text{dist}_{(1+\theta)R}\left(\Lambda(w), \frac{2}{p-1}\kappa\right)\leq\frac{\kappa}{p+1}.\]
\end{prop}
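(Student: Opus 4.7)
My plan is to argue by contradiction. Fix $\theta\in(0,1/4)$ small, to be chosen in terms of $n,p,M$ and the $\varepsilon$-regularity constant $\varepsilon_0$. Suppose the conclusion fails: extract sequences $R_k\to\infty$ and $w_k\in\mathcal{S}_M$ satisfying the hypothesis on $\overline{B_{R_k}}$ but admitting a failure point $y_k\in\overline{B_{(1+\theta)R_k}}\setminus B_{R_k}$ at which either $w_k$ is not smooth or $|w_k(y_k)-\kappa|+|\Lambda w_k(y_k)-\tfrac{2\kappa}{p-1}|>\kappa/(p+1)$.

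Form the associated ancient solutions $u_k(x,t)=(-t)^{-1/(p-1)}w_k(x/\sqrt{-t})\in\mathcal{F}_M$ and translate: $\tilde u_k(x,t):=u_k(y_k+x,t)\in\mathcal{F}_M$ as well. By Proposition \ref{prop compactness}, along a subsequence $\tilde u_k\to u_\infty\in\mathcal{F}_M$. The closeness region $\{(x,t):|y_k+x|\le R_k\sqrt{-t}\}$ exhausts $\R^n\times(-\infty,-(1+\theta)^2)$ as $k\to\infty$, since $R_k\sqrt{-t}-|y_k|\ge R_k(\sqrt{-t}-(1+\theta))\to\infty$ for each fixed $t<-(1+\theta)^2$. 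Passing to the $L^{p+1}_{loc}$ limit, $u_\infty$ inherits the pointwise bounds $|u_\infty-u_0|\le\kappa(-t)^{-1/(p-1)}/(10(p+1))$ and $|\partial_tu_\infty-\partial_tu_0|\le\kappa(-t)^{-p/(p-1)}/(20(p+1))$ on this past half-space, where $u_0(x,t):=\kappa(-t)^{-1/(p-1)}$ is the ODE solution.

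The decisive analytic step is to verify that $(0,-1)$ is a regular point of $u_\infty$. I would apply the monotonicity formula (Lemma \ref{Monotonicity formula}) at base point $(0,-1)$ with scale $s_\ast:=2[(1+\theta)^2-1]=O(\theta)$: since $-1-s_\ast<-(1+\theta)^2$ and $\sqrt{s_\ast}\ll R_k\theta$ for $k$ large, the Gaussian mass at this scale is concentrated well inside the closeness region, and inserting the $L^\infty$ bounds gives $E(s_\ast;0,-1,u_\infty)\le C\theta^{2/(p-1)}$; the gradient contribution, controlled by interior parabolic Schauder applied to $\tilde u_k-u_0$, is of higher order $O(\epsilon_1^2\theta^{(p+1)/(p-1)})$. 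Monotonicity in $s$ propagates smallness to all smaller scales, so $\bar E(s;0,-1,u_\infty)\le\varepsilon_0$ for $s\in[\delta,s_\ast/2]$ with $\delta$ small, and Proposition \ref{epsilonregularity} yields smoothness of $u_\infty$ in $Q_\delta^-(0,-1)$. Rerunning the argument at nearby base points shows $u_\infty$ is smooth on the full cylinder $\R^n\times[-(1+\theta)^2,-1+\eta]$ for some $\eta>0$.

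On this cylinder the difference $v:=u_\infty-u_0$ solves the linearised equation $\partial_tv-\Delta v=pu_0^{p-1}v+O(v^2)$ with initial bounds at $t=-(1+\theta)^2$ of relative size $\epsilon_1:=1/(10(p+1))$ for $v$ and $\epsilon_2:=(p-1)/(20(p+1))$ for $\partial_tv/\partial_tu_0$. A Gronwall estimate using $\int_{-(1+\theta)^2}^{-1}pu_0^{p-1}\,dt=\tfrac{2p}{p-1}\log(1+\theta)=O(\theta)$ transports these to $t=-1$ with multiplicative factor $1+O(\theta)$. Corollary \ref{coro convergence of singular points} then gives smooth convergence $\tilde u_k\to u_\infty$ near $(0,-1)$, so $w_k(y_k)=\tilde u_k(0,-1)\to u_\infty(0,-1)$ and $\Lambda w_k(y_k)=2\partial_t\tilde u_k(0,-1)\to 2\partial_tu_\infty(0,-1)$; combining the two bounds yields
\[
|w_k(y_k)-\kappa|+\bigl|\Lambda w_k(y_k)-\tfrac{2\kappa}{p-1}\bigr|\le\bigl(\epsilon_1+\tfrac{2\epsilon_2}{p-1}\bigr)(1+O(\theta))\kappa=(1+O(\theta))\tfrac{\kappa}{5(p+1)},
\]
strictly less than $\kappa/(p+1)$ for $\theta$ small, contradicting both modes of failure. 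The main obstacle is the third step: verifying the $\varepsilon$-regularity hypothesis for $u_\infty$ at $(0,-1)$, which requires controlling both the gradient contribution via interior parabolic regularity for $\tilde u_k-u_0$ and the Gaussian tail of the $E$-integral outside the closeness region via the Morrey estimate combined with Gaussian decay.
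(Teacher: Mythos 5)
Your strategy is genuinely different from the paper's, and it is workable, but it leaves a couple of real gaps that the paper's direct argument avoids.

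The paper's proof does not pass to a limit. It verifies, for the \emph{given} $u$, the smallness hypothesis $\bar E(s_\ast;x,t,u)<\varepsilon_0$ of the $\varepsilon$-regularity theorem directly at forward-in-time base points $(x,t)\in B_{R-4}\times[-1,-1+s_\ast]$ (Lemma~\ref{lem small of monotonicity quantity}, which requires a quantitative tail estimate coming from the Morrey bound together with the gradient estimate of Lemma~\ref{lem gradient estimate for self-similar solutions}). This yields uniform bounds on $|u|,|\partial_tu|,|\partial_{tt}u|$ on $B_{R-5}\times[-1,-1+s_\ast]$, and a one-line time-integration gives $|u(x,-1)-u(x,-1+s_\ast)|+|\partial_tu(x,-1)-\partial_tu(x,-1+s_\ast)|\le Cs_\ast$. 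The decisive step is the scaling identity $u(x,-1+s_\ast)=(1-s_\ast)^{-1/(p-1)}w(x/\sqrt{1-s_\ast})$ and its analogue for $\partial_t u$ and $\Lambda(w)$: this converts the short forward-in-time extension into a spatial expansion of the ball, giving the factor $(1+\theta)=1/\sqrt{1-s_\ast}$ (up to the fixed loss of $5$ in the radius) directly, with no compactness and no linearization. Your approach trades the explicit tail computation for compactness: in the limit $u_\infty$ the closeness region exhausts the whole past half-space $t<-(1+\theta)^2$, which makes the $\bar E$-bound essentially trivial to state, but you then pay by needing a forward-in-time propagation of the $L^\infty$ and $\partial_t$-bounds from $t=-(1+\theta)^2$ to $t=-1$.

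Two points in your sketch need repair. First, the claim that the gradient contribution to $E(s_\ast;0,-1,u_\infty)$ is $O(\epsilon_1^2\theta^{(p+1)/(p-1)})$ is not automatic; you must first observe that $u_\infty$ is smooth and uniformly bounded on the entire past half-space (by $\varepsilon$-regularity there), then apply interior parabolic Schauder to $v=u_\infty-u_0$ to deduce $|\nabla u_\infty|\le C\epsilon_1(-t)^{-(p+1)/(2(p-1))}$ before inserting it into the Gaussian integral. Second, and more importantly, your Gronwall step transports the $L^\infty$ bound on $v$ but does not by itself transport $|\partial_t v|$: the function $\partial_t v$ satisfies $\partial_t(\partial_tv)-\Delta(\partial_tv)=pu_\infty^{p-1}\partial_tv+p(u_\infty^{p-1}-u_0^{p-1})\partial_tu_0$, and one needs a separate Gronwall/Duhamel estimate (the source contributes $O(\epsilon_1\theta)$ relative to $\partial_tu_0$), or a forward parabolic Schauder bound together with the equation to control $\Delta v$ and hence $\partial_t v=\Delta v+\bar c\,v$. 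The paper's alternative—bounding $|\partial_{tt}u|$ by Schauder and integrating once—accomplishes this in one stroke, and it also sidesteps having to justify the spatial maximum principle on all of $\mathbb R^n$. Neither gap is fatal, but they are not incidental either; as written, the proposal is a correct strategy with missing analytic details, rather than a complete proof.

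Finally, a small normalization remark: you are correct that $\partial_tu(x,-1)=\tfrac12\Lambda(w)(x)$ with $\Lambda$ as defined in \eqref{definefunctioneta}; the displayed identity in the paper's Appendix~\ref{sec proof of proposition 4.5} drops the factor $\tfrac12$, which is immaterial to its proof but worth noting when matching your computation to the paper's.
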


Proposition \ref{pro2} shows that a worse bound on a sufficiently large ball $B_R(0)$ implies a better bound on a slightly smaller ball $B_{R-3}(0)$.  Proposition \ref{pro1} shows  that a better bound on a large ball $B_R(0)$ implies a worse bound on $B_{(1+\theta)R}(0)$ for some fixed $\theta>0$. Here the key point is that the constant $\theta$ is independent of $R$. 

The proof of Proposition \ref{pro2} and Proposition \ref{pro1} will be given in Appendix \ref{sec proof of Proposition 4.4} and Appendix \ref{sec proof of proposition 4.5} respectively.   Here we use them to prove Theorem \ref{thm rigidity 2}.
\begin{proof}[Proof of Theorem \ref{thm rigidity 2}]
Take  $R_\ast:=2\max\{R_1,R_2\}$, where $R_1$, $R_2$ are the constants in Propositions \ref{pro2} and \ref{pro1} respectively. 

Since $w$ satisfy \eqref{SC1} and \eqref{rigidity condition1},  standard parabolic regularity theory (see also Lemma \ref{lem gradient estimate for self-similar solutions}) implies that if $\varepsilon_2$ is small enough, then $w$ is smooth in $B_{R_\ast}(0)$ and it satisfies
\[\text{dist}_{R_\ast-1}(w,\kappa)+\text{dist}_{R_\ast-1}\left(\Lambda(w), \frac{2}{p-1}\kappa\right)\leq \frac{\kappa}{10(p+1)}.\]
Moreover, since $R_{\ast}$ depends only on $n, p$ and $M$, $\varepsilon_2$ can be chosen to be a universal positive constant.
 Notice that Proposition \ref{pro1} extends
the scale  by a factor greater than one while Proposition \ref{pro2} only forces one to come in by a constant amount to get the improvement. Therefore, as long as the initial scale $R_{\ast}$ is large enough, we can iterate this process to obtain
\[\sup_{y\in\mathbb{R}^{n}}|w(y)-\kappa|+\sup_{y\in\mathbb{R}^{n}}\left|\Lambda(w)(y)-\frac{2}{p-1}\kappa\right|\leq \frac{\kappa}{10(p+1)}.\]
This yields $w$ is a bounded positive solution of \eqref{SC1} such that $\Lambda(w)$ does not change sign. By Proposition \ref{ChCS}, $w\equiv\kappa$.
\end{proof}

Now we turn to the proof of Theorem \ref{thm rigidity 1}. We need the following result to change from $L^2_w$ distance to $\text{dist}_R$ so that we can apply Theorem \ref{thm rigidity 2}.

\begin{lem}\label{lem L2 convergence lift to smooth}
Let $\{u_{k}\}\subset\mathcal{F}_{M}$ be a sequence of solutions of \eqref{eqn} such that $u_{k}\to u_{\infty}$ as $k\to\infty$. If  $u_{\infty}$ is bounded in $Q_{4}^{-}(0, -1)$, then  there exist two positive constants  $s_{\ast}$ $C_\ast$ depending on $n, M, p, \|u_{\infty}\|_{L^{\infty}(Q_{4}^{-}(0, -1))}$, and  an integer $k_{0}$ such that for any $k\geq k_{0}$,   
\begin{equation}\label{5.3}
|u_{k}|\leq C_\ast \quad \text{in} ~ \overline{B_2}\times [-1-s_\ast,-1+s_\ast].
\end{equation}
Furthermore, $u_k\to u_\infty$
 in $C^{2,1}(B_2\times (-1,-1+s_\ast))$.
\end{lem}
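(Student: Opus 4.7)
The plan is to transfer the $L^\infty$ bound on $u_\infty$ over $Q_4^-(-1,0)$ to a uniform $L^\infty$ bound on $\{u_k\}$ over a slightly larger time slab via the $\varepsilon$-regularity theorem (Proposition \ref{epsilonregularity}). The crux is to verify smallness of the averaged monotonicity energy $\bar{E}(s_1;x,t,u_k)$ on a neighborhood of $\overline{B_2}\times\{-1\}$, first for $u_\infty$ using its smoothness in the backward cylinder, and then for $u_k$ by the compactness asserted in Proposition \ref{prop compactness}.

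Since $u_\infty\in L^\infty(Q_4^-(-1,0))$, interior parabolic regularity makes $u_\infty$ smooth on $Q_3^-(-1,0)$, with derivative bounds depending only on $n$, $p$, and $K:=\|u_\infty\|_{L^\infty(Q_4^-(-1,0))}$. For $x_0\in\overline{B_{5/2}}$ and small $s>0$, $\bar{E}(s;x_0,-1,u_\infty)$ involves $u_\infty$ only on time slices in $[-1-2s,-1-s]\subset(-4,-1)$, where it is smooth, and a direct Gaussian calculation gives
\[
\lim_{s\to 0^+}\sup_{x_0\in\overline{B_{5/2}}}\bar{E}(s;x_0,-1,u_\infty)=0.
\]
Fix $s_1>0$ with this supremum below $\varepsilon_0/4$. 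Since $\bar{E}(s_1;\cdot,\cdot,u_\infty)$ is continuous in $(x,t)$ (its integrand is $C^\infty$ on the relevant time slab), there is $s_2\in(0,s_1)$ with $\bar{E}(s_1;x,t,u_\infty)<\varepsilon_0/2$ on $\overline{B_{5/2}}\times[-1-s_2,-1+s_2]$; for such $(x,t)$ the time slices $t-\tau$ with $\tau\in[s_1,2s_1]$ still lie strictly below $-1$ in the smooth region.

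The strong convergence $u_k\to u_\infty$ in $L^{p+1}_{loc}$ and $\nabla u_k\to\nabla u_\infty$ in $L^2_{loc}$ provided by Proposition \ref{prop compactness}, combined with Gaussian tail estimates afforded by the Morrey bound \eqref{Morreyeatimates} in the spirit of Lemma \ref{control outside region}, yields
\[
\bar{E}(s_1;x,t,u_k)\longrightarrow\bar{E}(s_1;x,t,u_\infty)\quad\text{uniformly on}~\overline{B_{5/2}}\times[-1-s_2,-1+s_2],
\]
so that $\bar{E}(s_1;x,t,u_k)<\varepsilon_0$ there for all $k\ge k_0$. Choose $s_\ast=\delta>0$ small enough that $\delta\le s_1$ and $Q_{2\delta}^-(x_0,t_0)\subset B_{5/2}\times[-1-s_2,-1+s_2]$ for every $(x_0,t_0)\in\overline{B_2}\times[-1-s_\ast,-1+s_\ast]$. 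Applying Proposition \ref{epsilonregularity} at each such base point with $s=s_1\ge\delta$ delivers $|u_k|\le C_0\delta^{-2/(p-1)}=:C_\ast$ on $Q_\delta^-(x_0,t_0)$, and a finite covering argument gives \eqref{5.3}. Finally, once $u_k$ is uniformly bounded on $B_2\times(-1,-1+s_\ast)$, parabolic Schauder estimates deliver uniform $C^j$ bounds on compact subsets, and combining with the strong $L^{p+1}_{loc}$ convergence upgrades the convergence of $u_k$ to $u_\infty$ to $C^\infty_{loc}(B_2\times(-1,-1+s_\ast))$.

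The principal technical point is the uniform convergence of $\bar{E}(s_1;x,t,u_k)$ on the compact set $\overline{B_{5/2}}\times[-1-s_2,-1+s_2]$. Since $\bar{E}$ involves a Gaussian integral over all of $\mathbb{R}^n$ whereas $u_k\to u_\infty$ is only local, one splits each integral at a large radius $R=R(\epsilon)$: the Morrey hypothesis \eqref{Morreyeatimates} controls the Gaussian tails of $|\nabla u_k|^2$, $|u_k|^{p+1}$, and $u_k^2$ beyond $B_R$ by $\epsilon$ uniformly in $k$, while on $B_R$ the strong local convergence of $u_k$ and $\nabla u_k$ closes the argument. This is the only place where the full strength of the Morrey hypothesis enters nontrivially; everything else is a standard interplay between the $\varepsilon$-regularity and the compactness of $\mathcal{F}_M$.
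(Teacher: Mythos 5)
Your proposal is correct and follows essentially the same route as the paper's appendix argument: both establish smallness of the averaged energy $\bar E(s;x,t,u_k)$ near $\overline{B_2}\times\{-1\}$ by (i) decomposing the Gaussian integral into a near region and a tail, (ii) using the Morrey bound \eqref{Morreyeatimates} to kill the tail uniformly in $k$, (iii) using the smoothness of $u_\infty$ in $Q_4^-(-1,0)$ to make the near contribution small for small $s$, and (iv) passing from $u_\infty$ to $u_k$ via the strong $L^{p+1}_{loc}$ and $H^1_{loc}$ convergence; then both invoke Proposition \ref{epsilonregularity} and parabolic regularity. The only small imprecision is that you call the step $\lim_{s\to0^+}\sup_{x_0\in\overline{B_{5/2}}}\bar E(s;x_0,-1,u_\infty)=0$ ``a direct Gaussian calculation,'' whereas this step also needs the Morrey tail estimate on $u_\infty$ (which is available since $u_\infty\in\mathcal F_M$); your final paragraph acknowledges the tail issue in the $u_k\to u_\infty$ passage, so this is a presentational gap rather than a substantive one.
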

The proof of this lemma is similar to the one for Corollary \ref{coro convergence of singular points} and it is a direct consequence  of the $\varepsilon$-regularity (Proposition \ref{epsilonregularity}) and Schauder estimates for heat equation. However, because we need the quantitative estimate \eqref{5.3} in several places below, for completeness we will give a detailed  proof in Appendix \ref{sec proof of Lemma 4.6}. 
\begin{coro}\label{cor L2 convergence lift to smooth}
Let $\{u_{k}\}\subset\mathcal{F}_{M}$ be a sequence of solutions to \eqref{eqn} such that $u_{k}\to u_{\infty}$ as $k\to\infty$. If there exists a domain $\mathcal{D}\subset\R^n\times(-\infty,0)$ such that   $u_{\infty}\in C^{2,1}(\mathcal{D})$, then for any  $\mathcal{D}^\prime \Subset \mathcal{D}$, there exists an integer $k_{0}$ such that for any $k\geq k_{0}$,   $u_{k}\in C^{2,1}(\mathcal{D}^\prime)$ and $u_k\to u_\infty$
 in $C^{2,1}(\mathcal{D}^\prime)$.
\end{coro}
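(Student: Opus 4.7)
The plan is to deduce the corollary from Lemma \ref{lem L2 convergence lift to smooth} via a finite covering argument combined with the parabolic scaling invariance of $\mathcal{F}_M$. The key observation is that although Lemma \ref{lem L2 convergence lift to smooth} is stated at a specific base point and scale, the class $\mathcal{F}_M$ and the notion of convergence $u_k \to u_\infty$ are both preserved under parabolic rescaling and translation, so the lemma can be applied at arbitrary points of $\mathcal{D}$.

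Concretely, for each $(x_0, t_0) \in \overline{\mathcal{D}'}$ I would choose $r = r(x_0, t_0) > 0$ small enough that a fixed parabolic neighborhood of $(x_0, t_0)$ of size comparable to $r$, large enough to contain the image of the hypothesis region of Lemma \ref{lem L2 convergence lift to smooth} after the rescaling below, lies inside $\mathcal{D}$; this is possible by openness of $\mathcal{D}$ and compactness of $\overline{\mathcal{D}'}$. Then define
\[
\tilde u_k(x, t) := r^{\frac{2}{p-1}} u_k(x_0 + rx,\ t_0 + r^2(t+1)).
\]
By the scaling invariance of the Morrey estimate \eqref{Morreyeatimates}, $\tilde u_k \in \mathcal{F}_M$, and Proposition \ref{prop compactness} gives $\tilde u_k \to \tilde u_\infty$ where $\tilde u_\infty$ is the corresponding rescaling of $u_\infty$. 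By the smoothness of $u_\infty$ on $\mathcal{D}$ and the choice of $r$, $\tilde u_\infty$ is bounded on the cylinder required by Lemma \ref{lem L2 convergence lift to smooth}. Applying that lemma to $\tilde u_k$ yields $s_\ast > 0$ and $k_0 = k_0(x_0, t_0)$ such that $\tilde u_k \to \tilde u_\infty$ in $C^\infty(B_2 \times (-1 - s_\ast, -1 + s_\ast))$ for all $k \geq k_0$. Undoing the rescaling produces an open parabolic neighborhood $U(x_0, t_0)$ of $(x_0, t_0)$ on which $u_k$ is smooth and converges in $C^\infty$ to $u_\infty$ for all $k \geq k_0$.

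Since $\overline{\mathcal{D}'}$ is compact, finitely many such neighborhoods $U(x_i, t_i)$, $i = 1, \ldots, N$, cover it. Setting $k_0 := \max_{i} k_0(x_i, t_i)$, all $u_k$ with $k \geq k_0$ are smooth throughout $\mathcal{D}'$, and $u_k \to u_\infty$ in $C^\infty(\mathcal{D}')$ since this is a local property already established uniformly on each element of the finite cover. No substantive new obstacle arises beyond Lemma \ref{lem L2 convergence lift to smooth} itself; the only minor care needed is to pick $r$ small enough at each point so that the hypothesis region of Lemma \ref{lem L2 convergence lift to smooth} lies inside $\mathcal{D}$ after rescaling, which is automatic by openness of $\mathcal{D}$.
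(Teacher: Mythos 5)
Your proof is correct and follows essentially the same route the paper intends: apply Lemma \ref{lem L2 convergence lift to smooth} at each point of $\overline{\mathcal{D}'}$ after a parabolic rescaling and translation (which preserves $\mathcal{F}_M$ and the mode of convergence, provided $r$ is small enough that the rescaled base time stays nonpositive and the rescaled reference cylinder lies inside $\mathcal{D}$), then use compactness of $\overline{\mathcal{D}'}$ to extract a finite cover. The only minor imprecision is that Lemma \ref{lem L2 convergence lift to smooth} literally states $C^\infty$ convergence on $B_2 \times (-1, -1+s_\ast)$, which does not contain $t=-1$; but its boundedness conclusion on $\overline{B_2}\times[-1-s_\ast,-1+s_\ast]$ together with interior parabolic estimates upgrades this to $C^\infty$ convergence on a two-sided neighborhood of $t=-1$, which is what you use — this matches the paper's intent.
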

\begin{proof}
This can be obtained by combining Lemma \ref{lem L2 convergence lift to smooth} and a standard covering argument.
\end{proof}
\begin{lem}\label{prou-lem1}
Suppose $\{w_{k}\}\subset\mathcal{S}_{M}$  satisfies
\[\lim_{k\to\infty}\|w_{k}-\kappa\|_{L^2_w(\R^n)}=0.\]
Then $w_{k}\to\kappa$ in $C^2_{loc}(\mathbb{R}^{n})$.
\end{lem}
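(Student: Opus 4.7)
The plan is to lift the problem from the sequence $\{w_k\}$ on $\mathbb{R}^n$ to the sequence of their self-similar space-time extensions in $\mathcal{F}_M$, apply the compactness of $\mathcal{F}_M$, identify the limit as the ODE solution, and then invoke the smooth convergence from Corollary \ref{cor L2 convergence lift to smooth} to conclude. A subsequence-of-subsequences argument at the end upgrades the conclusion to the full sequence.

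First, I would set $u_k(x,t):=(-t)^{-\frac{1}{p-1}}w_k\bigl(\tfrac{x}{\sqrt{-t}}\bigr)$. By definition of $\mathcal{S}_M$, each $u_k\in\mathcal{F}_M$. By Proposition \ref{prop compactness}, after extracting a subsequence (still denoted $u_k$), there exists $u_\infty\in\mathcal{F}_M$ such that $u_k\to u_\infty$ in the sense of that proposition. Each $u_k$ satisfies the pointwise scaling identity $u_k(\lambda x,\lambda^2 t)=\lambda^{\frac{2}{p-1}}u_k(x,t)$ for every $\lambda>0$. Since the linear operator $T_\lambda g(x,t):=\lambda^{-\frac{2}{p-1}}g(\lambda x,\lambda^2 t)$ is continuous on $L^{p+1}_{loc}(\mathbb{R}^n\times(-\infty,0))$, this identity passes to the limit, so $u_\infty$ is itself self-similar, and we may write $u_\infty(x,t)=(-t)^{-\frac{1}{p-1}}w_\infty\bigl(\tfrac{x}{\sqrt{-t}}\bigr)$ for some $w_\infty$.

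Next I would show $w_\infty\equiv\kappa$. The $L^{p+1}_{loc}$ convergence of $u_k$ to $u_\infty$, combined with the self-similar structure and the change of variables $y=x/\sqrt{-t}$ on a product domain $B_R\times[t_1,t_2]$ with $-\infty<t_1<t_2<0$, gives
\[
\int_{t_1}^{t_2}(-t)^{\frac{n}{2}-\frac{p+1}{p-1}}\!\int_{B_{R/\sqrt{-t}}}|w_k(y)-w_\infty(y)|^{p+1}dy\,dt\longrightarrow 0.
\]
Passing to a further subsequence we obtain $w_k\to w_\infty$ in $L^{p+1}_{loc}(\mathbb{R}^n)$, hence in $L^2_{loc}(\mathbb{R}^n)$. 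On the other hand, since the Gaussian weight $e^{-|y|^2/4}$ is bounded below on every ball $B_R$, the hypothesis $\|w_k-\kappa\|_{L^2_w(\mathbb{R}^n)}\to 0$ gives $w_k\to\kappa$ in $L^2_{loc}(\mathbb{R}^n)$. The two limits must agree, so $w_\infty=\kappa$ a.e., whence $u_\infty=u_0$, where $u_0(x,t):=\kappa(-t)^{-\frac{1}{p-1}}$.

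Since $u_0$ is smooth on $\mathbb{R}^n\times(-\infty,0)$, Corollary \ref{cor L2 convergence lift to smooth} upgrades the convergence to $u_k\to u_0$ in $C^\infty_{loc}(\mathbb{R}^n\times(-\infty,0))$. Restricting to the slice $t=-1$ yields $w_k\to\kappa$ in $C^\infty_{loc}(\mathbb{R}^n)$ along the subsequence. Because the limit $\kappa$ is independent of the subsequence chosen, a standard subsequence-of-subsequences argument gives convergence of the full sequence $w_k\to\kappa$ in $C^\infty_{loc}(\mathbb{R}^n)$. The only mildly technical step is the extraction of $L^{p+1}_{loc}(\mathbb{R}^n)$ convergence of $w_k$ from the space-time $L^{p+1}_{loc}$ convergence of $u_k$; everything else is routine manipulation of the compactness and self-similar structures already set up.
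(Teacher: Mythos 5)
Your proof is correct and follows essentially the same route as the paper: lift to the space-time extensions $u_k\in\mathcal{F}_M$, apply the compactness of $\mathcal{F}_M$, identify the limit as the ODE solution $\kappa(-t)^{-\frac{1}{p-1}}$, invoke Corollary~\ref{cor L2 convergence lift to smooth} to upgrade to $C^\infty_{loc}$ convergence, and restrict to the $t=-1$ slice. The only cosmetic difference is that the paper identifies the limit by noting directly that the hypothesis forces $u_k\to\kappa(-t)^{-\frac{1}{p-1}}$ in $L^2_{loc}(\mathbb{R}^n\times(-\infty,0))$ and then invoking uniqueness of strong limits, whereas you take the slightly longer path of first proving $u_\infty$ is self-similar and then matching $w_\infty$ against $\kappa$; both are fine, and your closing subsequence-of-subsequences remark is a good touch since the paper leaves it implicit.
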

\begin{proof}
Let $u_{k}(x, t)=(-t)^{-\frac{1}{p-1}}w_{k}(\frac{x}{\sqrt{-t}})$. By Proposition \ref{prop compactness}, there exists a function $u_{\infty}\in\mathcal{F}_{M}$ such that $u_{k}\to u_{\infty}$. By the assumption,   $u_{k}\to \kappa(-t)^{-\frac{1}{p-1}}$ in $L_{loc}^{2}(\mathbb{R}^{n}\times(-\infty,0))$.
 The uniqueness of the strong convergence limit  implies $u_{\infty}=\kappa(-t)^{-\frac{1}{p-1}}$. By Corollary \ref{cor L2 convergence lift to smooth},  
$u_{k}\to\kappa(-t)^{-\frac{1}{p-1}}$ in $C_{loc}^{2,1}(\mathbb{R}^{n}\times (-\infty, 0))$. Restricting this convergence to  the $t=-1$ slice, we see that $w_{k}\to\kappa$ in $C_{loc}^{2}(\mathbb{R}^{n})$.
\end{proof}
By this lemma and a contradiction by compactness argument, we can find a universal positive constant $\varepsilon_1>0$ such that if $w\in\mathcal{S}_M$ satisfies $\|w-\kappa\|_{L^2_w(\R^n)}\leq \varepsilon_1$, then $\text{dist}_{R_\ast}(w,\kappa)\leq \varepsilon_2$. An application of Theorem \ref{thm rigidity 2} shows that $w\equiv\kappa$. This completes the proof of 
Theorem \ref{thm rigidity 1}.

\section{Uniqueness of the blow-down limit}
	\setcounter{equation}{0}
 
In order to prove Theorem \ref{maintheorem1}, one key step is to obtain the \emph{unique} asymptotic behavior of $u$ at large scales. This is the content of the following
\begin{prop}[Uniqueness of blow-down limit]\label{prouniquenessofblowdoen}
Assume $u$ satisfies the assumptions in Theorem \ref{maintheorem1}, then
\begin{equation}\label{prouniquenessofblowdoen1}
\lim_{t\to-\infty}(-t)^{\frac{1}{p-1}}u((-t)^{\frac{1}{2}}y, t)=\kappa \quad \text{in}~ C_{loc}^2(\R^n).
\end{equation}
%Moreover, the convergence is uniform on any compact set$B_{3R}(0)$.
\end{prop}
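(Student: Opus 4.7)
The plan is to promote the convergence along the given subsequence $\lambda_k$ to convergence along all $\lambda\to+\infty$, by combining Giga--Kohn monotonicity, the compactness of $\mathcal{F}_M$, and the rigidity of $\kappa$ in $\mathcal{S}_M$ supplied by Theorem \ref{thm rigidity 1}. Throughout we work in self-similar coordinates at the base point $(0,0)$, writing $w(y,s)$ for the self-similar transform of $u$; the conclusion \eqref{prouniquenessofblowdoen1} is equivalent to $w(\cdot,s)\to\kappa$ in $C^\infty_{loc}(\R^n)$ as $s\to-\infty$. By $\varepsilon$-regularity (Proposition \ref{epsilonregularity}) together with Corollary \ref{cor L2 convergence lift to smooth}, it is enough to prove convergence in $L^2_w(\R^n)$.

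The first step is to pin down the limiting energy. By Lemma \ref{Monotonicity formula} and Corollary \ref{coro 3.7}, $\tau\mapsto E(\tau;0,0,u)$ is non-decreasing and bounded above, so $E_\infty:=\lim_{\tau\to+\infty}E(\tau;0,0,u)$ exists and is finite. A direct change of variables shows $E(\tau;0,0,u_\lambda)=E(\lambda^2\tau;0,0,u)$, and for any self-similar function $(-t)^{-1/(p-1)}\varphi(x/\sqrt{-t})$ the monotonicity functional at $\tau=1$ equals the Gaussian-weighted energy $E(\varphi)$ defined in \eqref{energy}. The strong $L^{p+1}_{loc}$ and $H^1_{loc}$ convergence from Proposition \ref{prop compactness}, combined with the Gaussian-tail control in Lemma \ref{control outside region}, upgrades to convergence of the relevant weighted integrals, which, applied to the blow-down along $\lambda_k$, yields $E_\infty=E(\kappa)$. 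Equivalently, along the gradient flow \eqref{self-similar eqn}, $E(w(\cdot,s))$ is non-increasing and tends to $E(\kappa)$ as $s\to-\infty$.

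The main step is a continuity plus intermediate-value argument. Define
\[\Phi(s)^2:=\int_s^{s+1}\int_{\R^n}|w(y,\sigma)-\kappa|^2\rho(y)\,dy\,d\sigma,\]
which, by the Morrey estimate and the tail control of Lemma \ref{control outside region}, is finite, uniformly bounded, and continuous in $s$. The hypothesis \eqref{main assumption}, together with the observation that $u_{\lambda_k\,e^{s/2}}\to\kappa(-t)^{-1/(p-1)}$ for every fixed $s$, forces $\Phi(\sigma_k)\to 0$ along $\sigma_k:=-2\log\lambda_k-1\to-\infty$. Suppose for contradiction that $w(\cdot,s)\not\to\kappa$ in $L^2_w$ as $s\to-\infty$; then $\Phi(s)\not\to 0$, and we may fix $\varepsilon_0\in(0,\varepsilon_1)$, with $\varepsilon_1$ the rigidity constant of Theorem \ref{thm rigidity 1}, and extract $\mu_j\to-\infty$ with $\Phi(\mu_j)\ge\varepsilon_0$. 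By continuity and the intermediate value theorem, we can select $\tilde\sigma_k\to-\infty$ with $\Phi(\tilde\sigma_k)=\varepsilon_0/2$.

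Finally, we shift and pass to the limit. Setting $W_k(y,s):=w(y,s+\tilde\sigma_k)$, a direct computation identifies $W_k$ with the self-similar transform at $(0,0)$ of the rescaling $u_{\tilde\lambda_k}$ for $\tilde\lambda_k:=e^{-\tilde\sigma_k/2}\to+\infty$. Proposition \ref{prop blowing down limit} gives, along a subsequence, $u_{\tilde\lambda_k}\to(-t)^{-1/(p-1)}w_\infty(x/\sqrt{-t})$ with $w_\infty\in\mathcal{S}_M$, hence $W_k(y,s)\to w_\infty(y)$ (the $s$-independence reflecting self-similarity of the limit). The strong convergence together with Gaussian tail control permit passage to the limit inside $\Phi$, giving $\|w_\infty-\kappa\|_{L^2_w}=\varepsilon_0/2<\varepsilon_1$; Theorem \ref{thm rigidity 1} then forces $w_\infty\equiv\kappa$, contradicting $\varepsilon_0>0$. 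The principal technical hurdle is precisely this passage to the limit for Gaussian-weighted quantities: converting the strong $L^{p+1}_{loc}$ and $H^1_{loc}$ convergence of Proposition \ref{prop compactness} into convergence of the weighted integrals appearing in $E$ and $\Phi$, which is where Lemma \ref{control outside region} and the non-integer assumption on $m$ (already underlying Proposition \ref{prop compactness}) play an essential role.
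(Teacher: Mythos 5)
Your proposal follows essentially the same strategy as the paper: establish continuity of the $L^2_w$ distance to $\kappa$ along the self-similar flow, use the hypothesis to make it small along a sequence $s\to-\infty$, run an intermediate-value argument to extract times where the distance sits in the rigidity window $(0,\varepsilon_1)$, pass to a self-similar limit via compactness plus Gaussian tail control, and contradict Theorem~\ref{thm rigidity 1}; then upgrade $L^2_w$ to $C^\infty_{loc}$ via $\varepsilon$-regularity. Two remarks. First, your opening paragraph on the limiting energy $E_\infty=E(\kappa)$ is never used in the rest of the argument and can be dropped. Second, you work with the time-averaged quantity $\Phi(s)^2=\int_s^{s+1}\|w(\cdot,\sigma)-\kappa\|_{L^2_w}^2\,d\sigma$ rather than the pointwise $d(s)=\|w(\cdot,s)-\kappa\|_{L^2_w}$ (which the paper shows is Hölder in $s$ via the weighted $\partial_s w\in L^2$ bound from the monotonicity formula); this makes continuity of $\Phi$ cheap, but the assertion ``$w(\cdot,s)\not\to\kappa$ in $L^2_w$ implies $\Phi(s)\not\to 0$'' is then not immediate and is stated without proof. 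It does hold — its contrapositive follows from the same compactness/tail-control machinery you already invoke (if $\Phi(s_k)\to 0$, pass to a subsequential limit $w_\infty\in\mathcal{S}_M$ of $w(\cdot,\cdot+s_k)$ and conclude $\|w_\infty-\kappa\|_{L^2_w}=0$, hence $w(\cdot,s_k)\to\kappa$ in $L^2_w$) — but you should spell this out, or simply establish Hölder continuity of $d(s)$ directly and run the IVT on $d$ as the paper does.
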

\begin{rmk}\label{pro uniquenessofblowup}
The same arguments can also be used to show the uniqueness  of the tangent functions, i.e. Item \ref{item:uniqueness} in Theorem \ref{maintheorem2}.
\end{rmk}

Roughly speaking, the above uniqueness result follows from two facts: (i) Theorem \ref{thm rigidity 1}, that is,  $\kappa$ is isolated in $\mathcal{S}_M$ with respect to the $L_{w}^{2}(\mathbb{R}^{n})$ distance; (ii) the set of tangent functions is path connected.

First, we give a preliminary result about the convergence  in self-similar coordinates.
\begin{prop}\label{prop convergence for self-similar equations}
Let $u\in\mathcal{F}_M$ and $w$ denote its self-similar transform. Then for any $s_k\to-\infty$ or $s_k\to+\infty$, there exists a subsequence of $s_k$ such that $w(y, s+s_k)$ converges to a limit $w_\infty$ strongly in $L^{p+1}_{loc}(\R^n\times\R)\cap H^1_{loc}(\R^n\times\R)$.  Moreover, $w_\infty\in\mathcal{S}_M$ and $w(\cdot, s_{k})$ converges to $w_{\infty}$ strongly in $L^2_w(\R^n)$.
\end{prop}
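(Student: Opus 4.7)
The plan is to identify the time-shifted family $\{w(\cdot,\cdot+s_k)\}$ with the self-similar transform of a blow-down sequence of $u$ about $(0,0)$, then invoke the compactness result Proposition~\ref{prop blowing down limit}, and finally upgrade the space-time convergence thereby obtained to convergence at the single slice $s=s_k$ via the Giga-Kohn monotonicity formula.

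For the scaling reduction I would set $\lambda_k:=e^{-s_k/2}\to+\infty$ and $\tilde u_k(x,t):=\lambda_k^{2/(p-1)}u(\lambda_kx,\lambda_k^2t)$. A direct computation from \eqref{definition of self-similar transform} then shows that $\tilde w_k(y,s):=w(y,s+s_k)$ is exactly the self-similar transform of $\tilde u_k$ about $(0,0)$. Proposition~\ref{prop blowing down limit} yields, along a subsequence, a limit $u_\infty\in\mathcal F_M$ of the form $u_\infty(x,t)=(-t)^{-1/(p-1)}w_\infty(x/\sqrt{-t})$ with $w_\infty\in\mathcal S_M$; then the strong $L^{p+1}_{loc}\cap H^1_{loc}$ convergence $\tilde u_k\to u_\infty$ on $\R^n\times(-\infty,0)$ transfers, under the substitution $(x,t)\mapsto(y,s)$ of \eqref{definition of self-similar transform} (whose Jacobian factors are locally bounded), to the strong convergence $\tilde w_k\to w_\infty$ in $L^{p+1}_{loc}(\R^n\times\R)\cap H^1_{loc}(\R^n\times\R)$. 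This settles the first half of the proposition.

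The delicate step is the slice convergence $\tilde w_k(\cdot,0)\to w_\infty$ in $L^2_w(\R^n)$. Rewriting Lemma~\ref{Monotonicity formula} in self-similar coordinates shows that the energy $E(w(\cdot,\sigma))$ of \eqref{energy} is non-increasing in $\sigma$ with dissipation $\|\partial_\sigma w\|_{L^2_w}^2$, and Corollary~\ref{coro 3.7} bounds it uniformly in $[0,C]$. Hence $\int_\R\|\partial_\sigma w\|_{L^2_w}^2\,d\sigma<\infty$, and since $s_k\to-\infty$ one gets $\int_{s_k-1}^{s_k+1}\|\partial_\sigma w\|_{L^2_w}^2\,d\sigma=o_k(1)$, which together with Cauchy-Schwarz gives the slice comparison
\[\sup_{|s|\le\delta}\|\tilde w_k(\cdot,s)-\tilde w_k(\cdot,0)\|_{L^2_w}^2\le\delta\cdot o_k(1)\quad\text{for any fixed }\delta\in(0,1).\]
Separately, Lemma~\ref{control outside region} applied to $\tilde u_k\in\mathcal F_M$, together with the analogous tail bound for $w_\infty$ afforded by Lemma~\ref{pre-estimate} and with the local strong $L^{p+1}$ convergence from the previous paragraph, promotes the latter to the global statement $\int_{-\delta}^{\delta}\int_{\R^n}|\tilde w_k-w_\infty|^{p+1}\rho\,dy\,ds\to 0$; H\"older's inequality against the finite measure $\rho\,dy\,ds$ then gives the same statement with $p+1$ replaced by $2$. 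Averaging the triangle inequality
\[\|\tilde w_k(\cdot,0)-w_\infty\|_{L^2_w}\le\|\tilde w_k(\cdot,0)-\tilde w_k(\cdot,s)\|_{L^2_w}+\|\tilde w_k(\cdot,s)-w_\infty\|_{L^2_w}\]
over $s\in(-\delta,\delta)$ and sending $k\to\infty$ first (which kills the averaged $L^2$ term) and $\delta\to 0$ second (which kills the $O(\sqrt\delta)$ oscillation term) closes the slice convergence.

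The principal obstacle is exactly this last step: extracting slice-wise information at the prescribed time $s=s_k$ from the purely space-time strong convergence produced by the compactness result. The finiteness of the total dissipation in the Giga-Kohn monotonicity formula is the essential ingredient that makes the slice comparison effective; once it is available, the remainder is straightforward interpolation between the pointwise-in-time slice and the space-time average under the Gaussian measure.
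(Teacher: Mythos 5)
Your proof is correct but takes a genuinely different route from the paper's in the delicate second half. Both arguments agree on the first half: identify $w(\cdot,\cdot+s_k)$ with the self-similar transform of the blow-down sequence $\widetilde u_k$ and invoke Proposition~\ref{prop blowing down limit}. Both also share the same key ingredients for the slice convergence, namely the finiteness of the total Giga--Kohn dissipation $\int_{-\infty}^0\int_{\R^n}|\partial_s w|^2\rho\,dy\,ds<\infty$ and the tail bound of Lemma~\ref{control outside region} combined with the local strong $L^{p+1}$ convergence. Where you diverge is in how the slice information is extracted. The paper first uses a second estimate --- obtained by testing \eqref{self-similar eqn} against $w$ and following Giga--Kohn --- to prove a uniform local energy bound, which by Sobolev embedding gives $w\in C^\alpha((-\infty,0];L^2_w(\R^n))$; it then runs a contradiction argument: a persistent $L^2_w$ deficit at times $s_{k_l}$ would persist on a fixed interval by H\"older continuity, contradicting the global space-time $L^2_w$ smallness. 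You instead skip the H\"older continuity altogether, observing that the dissipation on $[s_k-\delta,s_k+\delta]$ vanishes as $s_k\to-\infty$, so that by Cauchy--Schwarz the slice oscillation $\sup_{|s|\le\delta}\|\widetilde w_k(\cdot,s)-\widetilde w_k(\cdot,0)\|_{L^2_w}$ is $\sqrt{\delta\cdot o_k(1)}$; the averaged triangle inequality then closes the argument directly. Your route is slightly more economical in that it does not need the uniform local energy bound; the paper's route, however, also delivers the H\"older continuity of $s\mapsto w(\cdot,s)$, which the paper reuses later (Remark~\ref{rmk continuity of blow up sequence in L2}, Step~1 of Proposition~\ref{prouniquenessofblowdoen}), so the extra work pays off elsewhere. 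One minor redundancy: since your oscillation bound is $\sqrt{\delta\cdot o_k(1)}$ with $o_k(1)\to 0$ for each fixed $\delta$, sending $k\to\infty$ alone already kills both terms, so the final limit $\delta\to 0$ is unnecessary.
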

\begin{proof}
We need only to prove $w(\cdot, s_{k})$ converges to $w_{\infty}$ strongly in $L^2_w(\R^n)$. The other properties are just a rescaling of the convergence in Proposition \ref{prop blow-down limit}.

By Lemma \ref{Monotonicity formula} and Corollary \ref{coro 3.7}, the energy $E(w(\cdot,s))$ (see \eqref{energy} for definition) is a decreasing function of $s$ and
\begin{equation}\label{energy upper bound}
\lim_{s\to-\infty} E(w(\cdot, s))<+\infty.
\end{equation}
By the localized energy inequality \eqref{energy inequality I} and an approximation in $H^1_w(\R^n)\cap L^{p+1}_w(\R^n)$, we get Giga-Kohn's monotonicity formula in self-similar form,
\begin{equation}\label{energy idenity}
\frac{d}{ds}E(w(\cdot, s))=-\int_{\R^n} |\partial_sw(y,s)|^2 \rho(y)dy.
\end{equation}
We emphasize that this should be understood in the distributional sense, because the localized energy inequality \eqref{energy inequality I} holds only in the distributional sense.

Combining \eqref{energy idenity} with \eqref{energy upper bound}, we get
\[\int_{-\infty}^0\int_{\R^n} |\partial_sw(y,s)|^2 \rho(y)dyds<+\infty.\]
On the other hand, testing \eqref{self-similar eqn} with $w$, we get (still understood in the distributional sense)
\[ \int_{\R^n} \partial_sw w\rho dy= \int_{\R^n}\left[|w|^{p+1}-|\nabla w|^2-\frac{1}{p-1}w^2\right] \rho dy.\]
Then following the proof of \cite[Proposition 2.2]{Giga-Kohn2}, we get
\[ \sup_{s<0} \int_{s-1}^s \int_{\R^n} \left[|\partial_sw(y,s)|^2+|\nabla w(y,s)|^2+|w(y,s)|^{p+1} \right]\rho(y)dy<+\infty.\]
By Sobolev embedding theorem, this implies that $w(\cdot, s)\in C^{\alpha}((-\infty,0];L^2_w(\R^n))$ for some $\alpha>0$. It follows that $w(\cdot, s)-w_{\infty} \in C^{\alpha}((-\infty,0];L^2_w(\R^n))$.

Assume  $w(\cdot, s_{k})$ does not converge strongly in $L^2_w(\R^n)$ to $w_{\infty}$. Then there exists a constant $\delta_{0}$ such that 
\[\limsup_{k\to\infty}\int_{\mathbb{R}^{n}}|w(\cdot, s_{k})-w_{\infty}|^{2}\rho dy>\delta_{0}.\]
Since $w(\cdot, s)-w_{\infty} \in C^{\alpha}((-\infty,0];L^2_w(\R^n))$,  there exist a constant $0<\delta_{1}<1/16$ and a subsequence $\{k_{l}\}$ such that for each $l$,
\[\int_{\mathbb{R}^{n}}|w(\cdot, s)-w_{\infty}|^{2}\rho dy>\frac{\delta_{0}}{2},\quad\text{for}~s\in (s_{k_{l}}-\delta_{1}, s_{k_{l}}+\delta_{1}).\]
In particular, we have
\begin{equation}\label{spacetimeestimate}
\int_{s_{k_{l}}-\delta_{1}}^{s_{k_{l}}+\delta_{1}}\int_{\mathbb{R}^{n}}|w(\cdot, s)-w_{\infty}|^{2}\rho dyds>\delta_{0}\delta_{1}.
\end{equation}

By Lemma \ref{control outside region}, there  exists a positive constant $R$ such that for each $k_{l}$,
\begin{equation}\label{spacetimeestimate1}
\int_{s_{k_{l}}-\delta_{1}}^{s_{k_{l}}+\delta_{1}}\int_{\mathbb{R}^{n}\backslash B_{R}}|w(\cdot, s)-w_{\infty}|^{2}\rho dyds<\frac{\delta_{0}\delta_{1}}{2}.
\end{equation}
Since $w(y,s+s_k)$ converges to  $w_\infty$ strongly in $L^{p+1}_{loc}(\R^n\times\R)$, there exists a constant $l_{0}$ such that for each $l>l_{0}$,
\begin{equation}\label{spacetimeestimate2}
\int_{s_{k_{l}}-\delta_{1}}^{s_{k_{l}}+\delta_{1}}\int_{ B_{R}}|w(\cdot, s)-w_{\infty}|^{2}\rho dyds<\frac{\delta_{0}\delta_{1}}{2}.
\end{equation}
By \eqref{spacetimeestimate1} and \eqref{spacetimeestimate1}, we know that for each $l>l_{0}$,
\[
\int_{s_{k_{l}}-\delta_{1}}^{s_{k_{l}}+\delta_{1}}\int_{\mathbb{R}^{n}}|w(\cdot, s)-w_{\infty}|^{2}\rho dyds<\delta_{0}\delta_{1},
\]
which contradicts \eqref{spacetimeestimate}.
\end{proof}
\begin{rmk}\label{rmk continuity of blow-up sequence in L2}
In the above proof, we have also showed that $w(\cdot, s)$ is continuous in $s$ with respect to the $L^2_w(\R^n)$ distance. 

Furthermore, because self-similar transform is equivalent to the blow-up procedure, for any base point $(x_0,t_0)$, the blow-up (or blow-down) sequence  $u_{x_0,t_0,\lambda}$ is also   continuous in $\lambda$ with respect to the $L^2_w(\R^n)$ distance.
\end{rmk} 

Now let us prove Proposition \ref{prouniquenessofblowdoen}.
\begin{proof}[Proof of Proposition \ref{prouniquenessofblowdoen}] 
Let $w$ be the self-similar transform of $u$.
%Then Proposition \ref{prouniquenessofblowdoen} is equivalently stated as  $\lim_{s\to-\infty}w(y, s)=\kappa$, where the convergence is in $C^\infty_{loc}(\R^n)$. 

\textbf{Step 1.} By Proposition \ref{prop convergence for self-similar equations}, $w\in C^{\alpha}((-\infty,0], L^2_w(\R^n))$, where $0<\alpha<1$ is a positive constant. As a consequence, 
\[ d(s):=\|w(s)-\kappa\|_{L^2_w(\R^n)}\]
is a bounded, continuous function on $(-\infty,0]$.

\textbf{Step 2.} There exists a sequence $s_{k,1}\to-\infty$ such that $\liminf_{k\to\infty}d(s_{k,1})=0$.

Let $\{\lambda_{k}\}$ be a sequence satisfying the assumptions in Theorem \ref{maintheorem1}. For each $k$,  set $s_{k,1}=-\log \lambda_k$.
   % Then there exists a function $w_{\infty}$ such that $w_{k}\to w_{\infty}$ as $k\to\infty$.
  Then by the assumptions in Theorem \ref{maintheorem1}, we   deduce  that $w_{k}:=w(\cdot, s+s_{k,1})\to \kappa$ in $L^2_w(\R^n)$. Hence
  \[ d(s_{k,1})=\|w(\cdot, s_{k,1})-\kappa\|_{L^2_w(\R^n)}\to 0.\]

\textbf{Step 3.}  $\lim_{s\to-\infty}d(s)=0$. 

Assume by the contrary, there exists a sequence $s_{k,2}\to-\infty$ such that $d(s_{k,2})\not\to 0$. By the continuity of $d$ and the result proved in Step 2, there exists a new sequence $s_{k,3}$ satisfying
\[  \lim_{k\to\infty}d(s_{k,3})\in(0, \varepsilon_1],\]
where $\varepsilon_1$ is the constant in Theorem \ref{thm rigidity 1}.

Let $w_{k}(y, s)=w(y, s+s_{k,3})$. By Proposition \ref{prop convergence for self-similar equations}, there exists a function $w_\infty\in \mathcal{S}_{M}$ such that
$w_{k}(\cdot, 0)\to w_\infty$ strongly in $L^2_w(\R^n)$. Then
\[ \|w_\infty-\kappa\|_{L^2_w(\R^n)}=\lim_{k\to\infty}d(s_{k,3})\in(0,\varepsilon_1].\]
This is a contradiction with Theorem \ref{thm rigidity 1}.

\textbf{Step 4:} $\lim_{s\to-\infty}w(y, s)=\kappa$ in $C^2_{loc}(\R^n)$.

The result in Step 3 says that $w(y, s)\to\kappa$ in $L^2_w(\R^n)$ as $s\to-\infty$. Then by Corollary \ref{cor L2 convergence lift to smooth}, we can lift this  to $C^2_{loc}(\R^n)$ convergence.
\end{proof}

\section{An auxiliary function}
\setcounter{equation}{0}

In this section, $u$ always denotes a solution of \eqref{eqn}, which satisfies the assumptions in Theorem \ref{maintheorem1}.

\begin{defi}\label{definefunctionT}
  For any $x\in\mathbb{R}^{n}$, let $RT(x)$ be the infimum of $s$ satisfying the following conditions:
\begin{itemize}
    \item[(1)] $\{x\}\times (-\infty,-s)\subset \text{Reg}(u)$;
    \item[(2)] $u>0$ on $\{x\}\times (-\infty,-s)$;
    \item[(3)] $|\Delta u(x, t)|\leq \frac{1}{2} u(x, t)^{p}$ for any $t<-s$.
\end{itemize}
\end{defi}
As a consequence of Proposition \ref{prouniquenessofblowdoen}, the function $RT$ is well defined.
 The function $RT$  introduced above satisfies the following two properties.
 \begin{lem}\label{Tproperties1}
 Let  $RT$ be 
 the function defined in Definition \ref{definefunctionT}, then
 \begin{itemize}
     \item[(1)] for any $x\in\R^n$, $RT(x)<+\infty$;
     \item[(2)] $RT(x)=o(|x|^{2})$ as $|x|\to+\infty$.
 \end{itemize}
 \end{lem}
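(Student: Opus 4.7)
The plan is to read off both properties directly from Proposition \ref{prouniquenessofblowdoen}. The key observation is that the third condition defining $RT(x)$ is scaling invariant: passing to self-similar variables $y=x/\sqrt{-t}$, $s=-\log(-t)$ and setting $w(y,s)=(-t)^{1/(p-1)}u(x,t)$, a direct chain-rule computation gives
\[\frac{\Delta_x u(x,t)}{u(x,t)^p}=\frac{\Delta_y w(y,s)}{w(y,s)^p},\]
while positivity and regularity of $u$ at $(x,t)$ are equivalent to positivity and regularity of $w$ at $(y,s)$. Since Proposition \ref{prouniquenessofblowdoen} gives $w(\cdot,s)\to\kappa$ in $C^\infty_{loc}(\R^n)$ as $s\to-\infty$, on any fixed compact set in the $y$ variable and for $s$ sufficiently negative one has $w$ smooth and positive, $\Delta_y w$ arbitrarily small, and $w^p$ close to the positive constant $\kappa^p$; hence the inequality $|\Delta_y w|\leq\tfrac{1}{2}w^p$ is eventually satisfied.

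For Property~(1), I would fix $x\in\R^n$ and let $t\to-\infty$. Then $y=x/\sqrt{-t}\to 0$ lies in any prescribed neighborhood of the origin once $-t$ is large enough; by the observation above there exists $T(x)>0$ such that all three conditions in Definition \ref{definefunctionT} hold on $\{x\}\times(-\infty,-T(x))$, giving $RT(x)\leq T(x)<+\infty$.

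For Property~(2), I would fix $\varepsilon>0$ and apply the uniform smooth convergence of $w(\cdot,s)$ to $\kappa$ on the closed ball $\overline{B_{1/\sqrt{\varepsilon}}(0)}$ to obtain $s_0=s_0(\varepsilon)<0$ such that, for every $s\leq s_0$ and every $y$ with $|y|\leq 1/\sqrt{\varepsilon}$, $w(y,s)$ is smooth, positive, and satisfies $|\Delta_y w(y,s)|\leq\tfrac{1}{2}w(y,s)^p$. Setting $R(\varepsilon):=\sqrt{e^{-s_0}/\varepsilon}$, for any $x$ with $|x|>R(\varepsilon)$ and any $t<-\varepsilon|x|^2$ the corresponding point $(y,s)=(x/\sqrt{-t},-\log(-t))$ satisfies $|y|=|x|/\sqrt{-t}\leq 1/\sqrt{\varepsilon}$ and $s=-\log(-t)\leq-\log(\varepsilon|x|^2)<s_0$. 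All three defining conditions of $RT$ therefore hold at $(x,t)$, so $RT(x)\leq\varepsilon|x|^2$, which yields $RT(x)=o(|x|^2)$ as $|x|\to+\infty$.

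There is no genuine obstacle here: the argument is a straightforward dictionary between original and self-similar coordinates, relying only on the scaling invariance of $\Delta u/u^p$ and on the $C^\infty_{loc}$ convergence already established in Proposition \ref{prouniquenessofblowdoen}. The point that needs attention is that in Property~(2) the radius of the ball in the $y$ variable is a fixed constant $1/\sqrt{\varepsilon}$, independent of $|x|$; this is exactly what turns a soft $O(|x|^2)$ bound into the quantitative $o(|x|^2)$ bound required here.
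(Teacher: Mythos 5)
Your proof is correct. For part~(1) it is essentially the paper's argument: both rely on Proposition~\ref{prouniquenessofblowdoen} to make $u$ close to the ODE solution for $t$ very negative; the paper obtains $|\Delta u|<\tfrac12 u^p$ through the $C^0$-closeness and ``standard parabolic estimates,'' while you obtain it through the exact scaling identity $\Delta_x u/u^p=\Delta_y w/w^p$ and the $C^\infty_{loc}$ convergence of $w$, which amounts to the same thing. For part~(2), however, your route is genuinely different and more direct. The paper argues by contradiction: assuming $RT(x_k)\ge\varepsilon|x_k|^2$ along a sequence $|x_k|\to\infty$, it rescales at scale $\lambda_k=RT(x_k)^{1/2}$, passes to a limit via the compactness of $\mathcal{F}_M$ (Proposition~\ref{prop compactness}) and Corollary~\ref{cor L2 convergence lift to smooth}, and rules out the three ways the first-failure time $-RT(x_k)$ could be realized. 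You bypass this: once the $C^\infty_{loc}$ convergence $w(\cdot,s)\to\kappa$ yields a threshold $s_0(\varepsilon)$ uniform over $\overline{B_{1/\sqrt\varepsilon}}$ in the self-similar variable, the scale invariance of $\Delta u/u^p$ converts this directly into $RT(x)\le\varepsilon|x|^2$ for $|x|>R(\varepsilon)$. This reuses Proposition~\ref{prouniquenessofblowdoen} once instead of re-running a compactness-and-contradiction argument parallel to the one that produced it; the paper's template, on the other hand, is the same one it exploits again in the proof of Theorem~\ref{maintheorem1}, which may explain the stylistic choice. The one point worth stating more explicitly in your write-up is the regularity claim: Proposition~\ref{prouniquenessofblowdoen} gives $C^\infty_{loc}$ convergence at time slices, so concluding $(x,t)\in\text{Reg}(u)$ (boundedness in a full backward parabolic cylinder, per the paper's definition) still needs one short parabolic step --- the uniform bound on $w$ over $\overline{B_{1/\sqrt\varepsilon}}\times(-\infty,s_0]$ transports to a bound on $u$ over a space-time region containing the required cylinders. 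The paper takes the same small step, and it is harmless, but you should say it.
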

 \begin{proof}
 (1) By Proposition \ref{prouniquenessofblowdoen}, there exists a large constant $C$  such that in $\{t<-C|x|^2-C\}$,
\[ \left|u(x,t)-\kappa(-t)^{-\frac{1}{p-1}}\right|\leq \frac{1}{C}(-t)^{-\frac{1}{p-1}}.\]
Then by standard parabolic estimates, we deduce that in $\{t<-2C|x|^2-2C\}$, $u$ is smooth and positive, and there holds
\[ |\Delta u(x,t)|\leq \frac{1}{C} (-t)^{-\frac{p}{p-1}}<\frac{1}{2} u(x,t)^{p}.\]
Hence $RT(x)\leq 2C|x|^2+2C$.

 (2) Assume by the contrary that there exists an $\varepsilon>0$ and a sequence $\{x_{k}\}$ such that $\lim_{k\to+\infty}|x_{k}|=+\infty$ and
 \begin{equation}\label{contradictionassumption}
 RT(x_{k})\geq \varepsilon|x_{k}|^{2}.\tag{7.1}
 \end{equation}
 Set $\lambda_{k}=RT(x_{k})^{\frac{1}{2}}$, which goes to infinity as $k\to\infty$.
  By \eqref{contradictionassumption}, we have 
  \begin{equation}\label{contradictionassumptioneq}
  \frac{|x_{k}|}{\lambda_{k}}\leq \varepsilon^{-\frac{1}{2}}.\tag{7.2}
  \end{equation}
  
  By the definition of $RT$, there are three possibilities: 
  \begin{itemize}
      \item[(1)]   $(x_{k}, -RT(x_{k}))$ is a singular point;
      \item[(2)]  $(x_{k}, -RT(x_{k}))$ is a regular point but $u(x_{k}, -RT(x_{k}))=0$; 
      \item[(3)]    $(x_{k}, -RT(x_{k}))$ is a regular point, $u(x_{k}, -RT(x_{k}))>0$ and
  \[|\Delta u(x_{k}, -RT(x_{k}))|=\frac{1}{2} u(x_{k}, -RT(x_{k}))^p.\]
  \end{itemize}
 
  Define
 \[u_{k}(x, t)=\lambda_{k}^{\frac{2}{p-1}}u(\lambda_{k}x, \lambda_{k}^{2}t).\]
 It is still an ancient solution of \eqref{eqn}. Moreover, we have the following three possibilities:
 \begin{itemize}
     \item[(1')] $(x_{k}/\lambda_{k}, -1)$ is a singular point of $u_k$;
     \item[(2')] $(x_{k}/\lambda_{k}, -1)$ is a regular point of $u_k$ but $u_{k}(x_{k}/\lambda_{k}, -1)=0$; 
     \item[(3')] $(x_{k}/\lambda_{k}, -1)$ is a regular point of $u_k$, $u_{k}(x_{k}/\lambda_{k}, -1)>0$
 and
 \[\left|\Delta u_{k}\left(\frac{x_{k}}{\lambda_{k}}, -1\right)\right|=\frac{1}{2} u_{k}\left(\frac{x_{k}}{\lambda_{k}}, -1\right)^p.\]
  \end{itemize}
  Since $\lambda_{k}$ goes to infinity as $k\to\infty$,  we get from  Proposition \ref{prouniquenessofblowdoen} that
 \begin{equation}\label{6.6'}
 u_{k}(x, t)\to \kappa(-t)^{-\frac{1}{p-1}},\quad\text{as}~k\to\infty.\tag{7.3}
 \end{equation}
 Combining \eqref{contradictionassumptioneq}, \eqref{6.6'}  with
  Corollary \ref {cor L2 convergence lift to smooth}, we deduce that $u_{k}$ is smooth in a fixed neighborhood of $ (x_{k}/\lambda_{k}, -1)$ provided $k$ is large enough, and the convergence in \eqref{6.6'} can be lifted to $C^{2,1}$ convergence in this neighborhood.  This excludes the above Case (1'). 
  
  Proposition \ref{prouniquenessofblowdoen} implies that the convergence in \eqref{6.6'} is
 uniform on any compact subset, so  Case (2') is also  excluded. Thus only Case (3') is possible. In other words,  $u_{k}(x_{k}/\lambda_{k}, -1)>0$ provided that $k$ is large enough.  Then
 \begin{equation}\label{endtimecondition}
 \left|\Delta u_{k}(\frac{x_{k}}{\lambda_{k}}, -1)\right|=\frac{1}{2} u_{k}
 \left(\frac{x_{k}}{\lambda_{k}}, -1
 \right)^{p}\to \frac{1}{2}\kappa^p.\tag{7.4}
 \end{equation}
But using  the smooth convergence of $u_k$ again, we also have
 \[\Delta u_{k}\left(\frac{x_{k}}{\lambda_{k}}, -1\right)\to 0\]
 as $k\to\infty$, which contradicts \eqref{endtimecondition}.
 \end{proof}
 \begin{rmk}
By a slight modification of the proof of item (2) in Lemma  \ref{Tproperties1}, we can also obtain that $u(x, 0)=o(|x|^{-\frac{2}{p-1}})$ as $|x|\to\infty$. Similar estimate has been obtained by Matano and Merle \cite[Remark 2.6]{Merle-Matano2011} under the assumption that solutions to \eqref{eqn} are radially symmetric.
 \end{rmk}

  \section{A conditional Liouville theorem}
  	\setcounter{equation}{0}
   
 In this section, our main objective is to prove the following conditional Liouville  type result. 
 \begin{prop}\label{preproposition}
 Let $u$ be a positive smooth solution of \eqref{eqn} in $\mathbb{R}^{n}\times (-\infty, -T_{0})$, where $T_{0}\geq 0$ is a nonnegative constant.
Assume there exists a  constant $0<\varepsilon<1$  such that 
 \begin{equation}\label{additional assumption}
     |\Delta u(x, t)|\leq \varepsilon u(x, t)^p,\quad\text{in}~\mathbb{R}^{n}\times (-\infty, -T_{0}).
 \end{equation}
Then  there exist a constant $T\geq -T_{0}$ such that
\[u(x, t)\equiv \kappa(T-t)^{-\frac{1}{p-1}},\quad\text{in}~\mathbb{R}^{n}\times (-\infty, -T_{0}).\]
 \end{prop}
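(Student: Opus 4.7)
The plan is to exploit the near-ODE structure afforded by the pointwise bound $|\Delta u|\le\varepsilon u^p$ with $\varepsilon<1$, and then reduce to a Merle-Zaag style linearization analysis around the constant solution $\kappa$ after passing to the self-similar transform. Since $u>0$, the assumption gives $(1-\varepsilon)u^p\le \partial_t u \le(1+\varepsilon)u^p$, so $u$ is strictly increasing in $t$ along each vertical line $\{x\}\times(-\infty,-T_0)$. Integrating the resulting bound $|\partial_t(u^{1-p})+(p-1)|\le(p-1)\varepsilon$ up to $(-T_0)^-$, and using that $\lim_{t\to(-T_0)^-}u(x,t)^{1-p}\ge 0$, produces the uniform Type~I bound $u(x,t)^{p-1}\le[(p-1)(1-\varepsilon)|T_0+t|]^{-1}$ for all $t<-T_0$.

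I next pass to the self-similar transform $w(y,s)$ of $u$ centered at the base point $(0,-T_0)$, defined on $\R^n\times\R$ and solving \eqref{self-similar eqn}. The bound $|\Delta u|\le\varepsilon u^p$ is scale-invariant and descends to $|\Delta_y w|\le\varepsilon w^p$, while the Type~I bound translates into $w^{p-1}\le[(p-1)(1-\varepsilon)]^{-1}$ on $\R^n\times\R$. Parabolic Schauder theory then upgrades these to uniform $C^k$ bounds on $w$ over every half-infinite slab $\R^n\times(-\infty,s_\ast]$, so any sequence $s_k\to-\infty$ admits a subsequence along which $w(\cdot,\cdot+s_k)$ converges in $C^\infty_{loc}$ to a bounded ancient solution $w_\infty$ of \eqref{self-similar eqn} still satisfying $|\Delta w_\infty|\le\varepsilon w_\infty^p$. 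The integrability $\int_{-\infty}^0\int|\partial_s w|^2\rho\,dy\,ds<\infty$ obtained from Giga-Kohn's monotonicity formula forces $\partial_s w_\infty\equiv 0$, so $w_\infty$ is a bounded solution of the steady equation \eqref{SC1}.

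The crucial observation is that, by the definition of $\Lambda$ and \eqref{SC1}, one has the identity $\Lambda(w)=2w^p+2\Delta w$; combined with the Laplacian bound this yields
\[\Lambda(w_\infty)\geq 2(1-\varepsilon)w_\infty^p\ge 0\quad\text{on }\R^n.\]
Hence $\Lambda(w_\infty)$ does not change sign, and Proposition~\ref{ChCS} shows that $w_\infty$ is a constant. The lower half of the Type~I bound at $y=0$ forces $w_\infty(0)^{p-1}\ge[(p-1)(1+\varepsilon)]^{-1}>0$, so this constant is non-zero and hence $w_\infty\equiv\kappa$. Uniqueness of the subsequential limit then upgrades to the convergence $w(\cdot,s)\to\kappa$ in $C^\infty_{loc}(\R^n)$ as $s\to-\infty$.

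Setting $v:=w-\kappa\to 0$, the self-similar equation linearizes to $\partial_s v=(1-\mathcal{L})v+N(v)$, where $\mathcal{L}=-\Delta+(y/2)\cdot\nabla$ is the Ornstein-Uhlenbeck operator with spectrum $\{k/2\}_{k\ge 0}$ on $L^2_w(\R^n)$. I would then invoke the Merle-Zaag lemma exactly as in Step~3 of the subcritical argument recalled in Subsection~2.4: the negative eigenmodes are excluded by the boundedness of $v$; the zero (quadratic Hermite) eigenmode is ruled out because $|\Delta w|\le\varepsilon w^p<w^p$ is incompatible with the $|y|^2$ growth such a mode would impose on $w$; the linear $y_i$ modes are killed by the translation invariance of \eqref{eqn} in $x$; and what survives is only the constant positive eigenmode, which precisely parameterizes a shift of the blow-up time $T$ and yields $u(x,t)\equiv\kappa(T-t)^{-1/(p-1)}$ for some $T\ge-T_0$. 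The main obstacle I anticipate is this last classification step: ruling out the zero-eigenvalue Hermite modes from the pointwise Laplacian constraint alone requires a direct analysis of the mode expansion rather than abstract spectral theory.
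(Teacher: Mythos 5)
Your derivation of the backward convergence $w(\cdot,s)\to\kappa$ in $C^\infty_{loc}$ as $s\to-\infty$ tracks the paper's Lemma~\ref{lem2024-0416} closely: the one-sided bound $\Delta u\ge-\varepsilon u^p$ gives $\partial_t u\ge(1-\varepsilon)u^p$ and hence the Type~I bound, and the sign condition $\Lambda(w_\infty)\ge 0$ for any backward limit is obtained in the paper from the same differential inequality passed to self-similar variables. Your formulation via the identity $\Lambda(w)=2(w^p+\Delta w)$ for solutions of \eqref{SC1} is a tidy equivalent restatement, and your exclusion of $w_\infty=0$ via the opposite ODE inequality $\partial_t u\le(1+\varepsilon)u^p$ at a fixed $x$ is a clean variant of the paper's appeal to the positivity of the Giga--Kohn energy. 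Up to here your argument and the paper's are essentially the same.

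The genuine gap is in the forward classification. After $w(\cdot,s)\to\kappa$ as $s\to-\infty$, the paper does \emph{not} linearize directly. In Proposition~\ref{calssifyingboundednonegativesolution} (fed into Corollary~\ref{classifyancientsolution}), it uses the decrease of $E(w(\cdot,s))$ together with Theorem~\ref{ME} --- the energy-minimality of $\pm\kappa$ among bounded nonconstant solutions of \eqref{SC1}, a hard input from \cite{Wang-Wei-Wu} --- to pin down the forward limit $w_{+\infty}\in\{0,\kappa\}$; if it is $\kappa$ the energy is constant so $w\equiv\kappa$, and if it is $0$ the paper quotes \cite[Theorem~1.4]{Merle-Zaag}, whose proof performs the Merle--Zaag Lemma~A.1 analysis around $\kappa$ but \emph{uses both boundary conditions} ($\kappa$ at $-\infty$ and $0$ at $+\infty$) to rule out the neutral $k=2$ modes. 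Your proposal tries to bypass Theorem~\ref{ME} and rule out the quadratic Hermite modes directly from $|\Delta w|\le\varepsilon w^p$. As written this does not work: if $v=w-\kappa\approx\frac{C}{|s|}H_2(y_1)$ in $L^2_w$ as $s\to-\infty$, then on any fixed compact set (and indeed up to $|y|\lesssim\sqrt{|s|}$) one has $\Delta v=O(1/|s|)\to 0$ while $w^p\to\kappa^p>0$, so $|\Delta w|\le\varepsilon w^p$ is satisfied with room to spare and yields no contradiction --- the ``$|y|^2$ growth'' of the mode is only an $L^2_w$ asymptotic near the origin, not a pointwise lower bound on $\Delta w$. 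You flag this step yourself as ``the main obstacle,'' and it is in fact the missing idea: without Theorem~\ref{ME} (or a replacement argument) there is no mechanism to control the forward orbit, which is exactly what the Merle--Zaag classification around $\kappa$ needs in order to discard the null modes.
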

  Proposition \ref{preproposition} is almost the same with Theorem \ref{maintheorem1}, except that we add an additional assumption \eqref{additional assumption}.

To prove this proposition, we need the following lemma.
 \begin{lem}\label{lem2024-0416}
Under the assumptions of Proposition \ref{preproposition},
 we have
 \begin{equation}\label{lem2024-0416'}
\lim_{t\to-\infty}(-T_{0}-t)^{\frac{1}{p-1}}u((-T_{0}-t)^{\frac{1}{2}}y, t)=\kappa \quad\text{in}~~ C^\infty_{loc}(\R^n).
\end{equation}
 \end{lem}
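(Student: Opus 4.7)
The plan is to exploit the ODE-type behavior forced on $u$ by the hypothesis $|\Delta u|\le\varepsilon u^{p}$ with $\varepsilon<1$. Setting $\phi:=u^{-(p-1)}$ and using \eqref{eqn} one computes
\[
\partial_{t}\phi=-(p-1)\bigl(1+u^{-p}\Delta u\bigr)\in\bigl[-(p-1)(1+\varepsilon),\,-(p-1)(1-\varepsilon)\bigr].
\]
Integrating this two-sided inequality backward in $t$ from some fixed $t_{0}<-T_{0}$, and using that $\phi$ is positive throughout $\R^{n}\times(-\infty,-T_{0})$, yields the uniform Type I upper bound
\[
u(x,t)\le C\,(-T_{0}-t)^{-\frac{1}{p-1}}\qquad \forall x\in\R^{n},\ -t\gg1,
\]
and a matching local-in-$x$ lower bound $u(x,t)\ge c_{x}(-T_{0}-t)^{-\frac{1}{p-1}}$. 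The upper bound provides the compactness one needs; the lower bound will rule out a trivial blow-down limit.

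After translating so that $T_{0}=0$, introduce the blow-down family $u_{\lambda}(y,\tau):=\lambda^{\frac{2}{p-1}}u(\lambda y,\lambda^{2}\tau)$ for $\tau<0$ and $\lambda\to+\infty$. The Type I upper bound above, combined with standard parabolic regularity and the scale-invariance of $|\Delta u_{\lambda}|\le\varepsilon u_{\lambda}^{p}$, gives uniform $C^{k}_{loc}$ estimates for $u_{\lambda}$ on $\R^{n}\times(-\infty,0)$, and also yields the Morrey bound \eqref{Morreyeatimates} for $u$ with some $M$ depending only on $n,p,\varepsilon$. Proposition \ref{prop blowing down limit} then produces, along a subsequence $\lambda_{k}\to\infty$, a blow-down limit
\[
u_{\infty}(y,\tau)=(-\tau)^{-\frac{1}{p-1}}\,w_{\infty}\!\left(\tfrac{y}{\sqrt{-\tau}}\right),\qquad w_{\infty}\in\mathcal{S}_{M},
\]
and by Corollary \ref{cor L2 convergence lift to smooth} the convergence is in $C^{\infty}_{loc}(\R^{n}\times(-\infty,0))$. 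The pointwise local lower bound on $u$ together with the strong maximum principle applied to \eqref{SC1} ensures $w_{\infty}>0$ on $\R^{n}$.

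Because the inequality $|\Delta u_{\lambda}|\le\varepsilon u_{\lambda}^{p}$ is scale-invariant and passes to the smooth limit, one also has $|\Delta w_{\infty}|\le\varepsilon w_{\infty}^{p}$ on $\R^{n}$. Combined with the self-similar equation \eqref{SC1}, this gives
\[
\tfrac{1}{2}\Lambda(w_{\infty})=\tfrac{1}{p-1}w_{\infty}+\tfrac{y}{2}\cdot\nabla w_{\infty}=\Delta w_{\infty}+w_{\infty}^{p}\ge(1-\varepsilon)\,w_{\infty}^{p}>0.
\]
Hence $\Lambda(w_{\infty})$ does not change sign, so by Proposition \ref{ChCS} the function $w_{\infty}$ is a constant solution of \eqref{SC1}; positivity forces $w_{\infty}\equiv\kappa$. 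Since every subsequential blow-down limit must therefore equal $\kappa(-\tau)^{-\frac{1}{p-1}}$, the full convergence $u_{\lambda}\to\kappa(-\tau)^{-\frac{1}{p-1}}$ holds in $C^{\infty}_{loc}(\R^{n}\times(-\infty,0))$, and evaluating at $\tau=-1$ recovers exactly \eqref{lem2024-0416'}.

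The step I expect to require the most care is the first one: translating the \emph{pointwise} ODE inequality for $\phi$ into quantitative uniform Type I bounds on $u$. The upper bound on $u$ is an immediate consequence of $\partial_{t}\phi\le-(1-\varepsilon)(p-1)<0$ together with positivity of $\phi$, but extracting enough lower boundedness of $u$ to guarantee that the limit $w_{\infty}$ is not identically zero (so that Proposition \ref{ChCS} is actually applicable) is the delicate point, and it is where the assumption that $u$ is \emph{smooth and positive} on the whole half-space $\R^{n}\times(-\infty,-T_{0})$ is used in full force.
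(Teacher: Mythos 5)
Your proof is correct, and the first half (deriving the Type~I upper bound from $|\Delta u|\le\varepsilon u^{p}$ via $\phi=u^{1-p}$, passing to a blow-down/self-similar limit $w_\infty$, reading off $\Lambda(w_\infty)\ge(1-\varepsilon)w_\infty^{p}\ge0$, and invoking Proposition~\ref{ChCS}) runs parallel to the paper's argument. Where you genuinely diverge is in ruling out the trivial limit $w_\infty\equiv 0$. You exploit the \emph{other} half of the two-sided ODE bound $\partial_t\phi\ge -(p-1)(1+\varepsilon)$ to get $\phi(x,t)\le\phi(x,t_0)+(p-1)(1+\varepsilon)(t_0-t)$ and hence, at the base point $x=0$, a pointwise lower bound forcing $w_\infty(0)\ge((p-1)(1+\varepsilon))^{-1/(p-1)}>0$; since $w_\infty$ is constant by Proposition~\ref{ChCS}, this already identifies $w_\infty=\kappa$ (the appeal to the strong maximum principle is superfluous once constancy is known). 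The paper instead rules out $w_\infty=0$ via the energy framework: positivity of $u$ gives $E(w(\cdot,s))>0$ for every $s$ (citing Merle--Zaag), while $w_\infty=0$ would force $\lim_{s\to-\infty}E(w(\cdot,s))=0$ and hence $E\le 0$ by monotonicity --- a contradiction. Your pointwise route is more elementary and self-contained, avoiding the external energy positivity lemma, while the paper's route fits more naturally with the machinery (monotonicity of $E$, Proposition~\ref{prop convergence for self-similar equations}) already in play. One small caveat worth noting explicitly in your write-up: the spatially local lower bound $u(x,t)\ge c_x(\cdot)^{-1/(p-1)}$ has $x$-dependent constants that can degenerate under the blow-down at points $y\neq 0$, so the lower bound is only directly usable at $y=0$; as stated, your passage from the local lower bound to $w_\infty>0$ everywhere needs the constancy from Proposition~\ref{ChCS} (or the strong maximum principle), not the lower bound alone.
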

 \begin{proof}
Combining \eqref{additional assumption} with \eqref{eqn}, we see
  \begin{equation}\label{maindifferentialinequality'}
 \partial_{t}u\geq (1-\varepsilon)u^p,\quad\text{in}~\mathbb{R}^{n}\times (-\infty, -T_{0}).
 \end{equation}
 This inequality is equivalent to
 \begin{equation}\label{utimeinequality}
 \partial_{t}u^{1-p}\leq -(p-1)(1-\varepsilon).
 \end{equation}
Fix a small positive constant $\delta$  so that $-T_{0}-\delta>t$. Integrating \eqref{utimeinequality} from $(x, t)$ to $(x, -T_{0}-\delta)$ in time, we get
 \[u(x, -T_{0}-\delta)^{1-p}-u(x, t)^{1-p}\leq -(p-1)(1-\varepsilon)(-T_{0}-\delta-t).\]
After letting $\delta\to0$, we obtain
 \begin{equation}\label{ODE upper bound}
     u(x, t)\leq M(-T_{0}-t)^{-\frac{1}{p-1}},\quad\text{in}~\mathbb{R}^{n}\times (-\infty, -T_{0})
 \end{equation}
 with $M=((p-1)(1-\varepsilon))^{-\frac{1}{p-1}}$.
 
 Consider the self-similar transform of $u$ with respcet to $(0,-T_0)$, that is, 
 \begin{equation}\label{newselfsimilartransform}
 w(y, s)=(-T_{0}-t)^{\frac{1}{p-1}}u(x, t),\quad x=(-T_{0}-t)^{\frac{1}{2}}y,\quad s=-\log(-T_{0}-t).
 \end{equation}
 Then $w$ is a solution of \eqref{self-similar eqn} satisfying
 \begin{equation}\label{6.6}
     0\leq w\leq M,\quad\text{in}~~\mathbb{R}^{n}\times (-\infty, +\infty).
 \end{equation}

 By \eqref{newselfsimilartransform}, we have
\[\partial_{t}u=(-T_{0}-t)^{-\frac{1}{p-1}-1}\left(\frac{1}{p-1}w+\frac{y}{2}\cdot\nabla w+\partial_{s}w\right)\]
and
\[u^p=(-T_{0}-t)^{-\frac{1}{p-1}-1}w^p.\]
Plugging these two relations into \eqref{maindifferentialinequality'}, we get
\[\frac{1}{p-1}w+\frac{y}{2}\cdot\nabla w+\partial_{s}w\geq (1-\varepsilon)w^p,\quad\text{in}~\mathbb{R}^{n}\times (-\infty, +\infty).\]
 
Let $\{s_{k}\}$ be a sequence such that $\lim_{k\to-\infty}s_{k}=-\infty$ and set $w_{k}(y, s)=w(y, s+s_{k})$. By Proposition \ref{prop convergence for self-similar equations} and \eqref{6.6},  there exists a function $w_{\infty}$ such that $w_{k}\to w_{\infty}$ in  $C^\infty_{loc}(\mathbb{R}^{n}\times(-\infty,\infty))$. Moreover, $w_{\infty}$ is independent of $s$, so by \eqref{6.6}, it   is a bounded solution of \eqref{SC1}. As a consequence, $\partial_sw_{k}\to 0$ in  $C^\infty_{loc}(\mathbb{R}^{n}\times(-\infty,\infty))$.

Because $w_k$ is just a translation (in time) of $w$, it also satisfies
\[\frac{1}{p-1}w_{k}+\frac{y}{2}\cdot\nabla w_{k}+\partial_{s}w_{k}\geq (1-\varepsilon)w_{k}^{p},\quad\text{in}~\mathbb{R}^{n}\times (-\infty, +\infty).\]
By the above convergence of $w_k$, taking limit in $k$ gives
\[\frac{1}{p-1}w_{\infty}+\frac{y}{2}\cdot\nabla w_{\infty}\geq (1-\varepsilon)w_{\infty}^{p},\quad\text{in}~\mathbb{R}^{n}.\]
Hence $w_{\infty}$ is a bounded solution of \eqref{SC1} satisfying
\[\frac{1}{p-1}w_{\infty}+\frac{y}{2}\cdot\nabla w_{\infty}\geq 0,\quad\text{in}~\mathbb{R}^{n}.\]
By Proposition \ref{ChCS}, $w_{\infty}\equiv 0$ or $\kappa$. Because we have assumed that $u$ is positive,   \cite[Proposition 2.1]{Merle-Zaag2000} implies for any $s\in(-\infty, +\infty)$, the   energy $E(w(\cdot, s))$ (as defined in \eqref{energy}) is positive. If $w_{\infty}=0$, this will contradict \cite[Proposition 3]{Giga-Kohn1985}. Hence it is only possible  that $w_{\infty}=\kappa$.

Since $\{s_{k}\}$ can be arbitrary, we conclude that $\lim_{s\to-\infty}w(y, s)=\kappa$, where the convergence is in $C^\infty_{loc}(\R^n)$. Then by the definition of $w$, we get \eqref{lem2024-0416'}.
 \end{proof}
 \begin{rmk}
 Consider the Cauchy problem for \eqref{eqn} with initial value 
$u(\cdot, 0)=\kappa+\psi$. Let $C^{2}(\mathbb{R}^n)$ be the space of $C^{2}$ continuous functions, equipped with the norm
\[\|\psi\|_{C^{2}(\mathbb{R}^{n})}:= \sup_{x\in\mathbb{R}^{n}}[|\psi(x)|+|\nabla \psi(x)|+|\nabla^{2}\psi(x)|].\]
If $\|\psi\|_{C^{2}(\mathbb{R}^{n})}$ is small enough, then there exists $\varepsilon>0$ such that
\[|\Delta\psi|\leq \varepsilon (\kappa+\psi)^{p},\quad\text{in}~\mathbb{R}^{n}.\]
If $u$ blows up at time $T$, then the maximum principle implies
 \[\partial_{t}u\geq (1-\varepsilon)u^p,\quad\text{in}~\mathbb{R}^{n}\times(0, T).\]
 Similar to the proof of Lemma \ref{lem2024-0416}, one can show that if $(x_{0}, T)\in\text{Sing}(u)$, then $\mathcal{T}(x_{0}, T; u)=\{\kappa\}$. This implies the ODE blow-up is stable under small perturbations. The stability of the ODE blow-up has been studied intensively in the  literature, see for instance \cite{Fermanian-Merle-Zaag,Merle-Zaag1997}. In a recent paper \cite{Harada}, Harada generalized the results in \cite{Fermanian-Merle-Zaag} to the case that the exponent $p$ is supercritical. In particular, he proved that nondegenerate ODE type blow-up \footnote{The definition of nondegenerate ODE type blow-up can be found in \cite{Harada}} is stable (under small perturbations in
the $L^{\infty}$ norm on the initial data). This is different from our stability result.
 \end{rmk}
 \begin{prop}\label{calssifyingboundednonegativesolution}
 Let $w$ be a bounded, positive solution of the self-similar equation
 \eqref{self-similar eqn} in $\R^n\times(-\infty,+\infty)$.
If
 \begin{equation}\label{7.1}
 \lim_{\tau\to-\infty}w(y, \tau)=\kappa \quad \text{in} ~~ C_{loc}^{\infty}(\R^n),
 \end{equation}
 then 
 \begin{itemize}
 \item[(i)] either $w\equiv \kappa$; 

 \item[(ii)] or there exists $s_{0}\in\mathbb{R}$ such that
  \[w(y, s)=\kappa(1+e^{s-s_0})^{-\frac{1}{p-1}},\quad\text{in}~\mathbb{R}^{n}\times (-\infty, +\infty).\]
 \end{itemize}
 \end{prop}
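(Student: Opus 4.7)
The plan is to linearize around $\kappa$ and apply a Merle-Zaag-type backward dichotomy, in analogy with Step 3 of the proof in \cite{Merle-Zaag} recalled in Subsection 2.4. Set $v := w - \kappa$. The hypothesis that $w$ is bounded on $\R^n\times\R$ combined with $v(\cdot,\tau)\to 0$ in $C^\infty_{loc}(\R^n)$ as $\tau\to-\infty$ gives, by dominated convergence against the Gaussian weight, $\|v(\cdot,s)\|_{L^2_w(\R^n)}\to 0$ and $\|\nabla v(\cdot,s)\|_{L^2_w}\to 0$ as $s\to-\infty$. Using $p\kappa^{p-1} = p/(p-1)$, the function $v$ satisfies
\[
\partial_s v = \mathcal{L}v + N(v), \qquad \mathcal{L}v := \Delta v - \tfrac{y}{2}\cdot\nabla v + v,
\]
with $N(v) := (\kappa+v)^p - \kappa^p - p\kappa^{p-1}v$ satisfying $|N(v)|\leq C v^2$ thanks to the global bound on $w$. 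The self-adjoint operator $\mathcal{L}$ on $L^2_w(\R^n)$ has discrete spectrum $\{1 - k/2 : k\geq 0\}$ with Hermite eigenfunctions: the positive modes are the constants ($k=0$, eigenvalue $1$) and the linear modes $\{y_i\}$ ($k=1$, eigenvalue $1/2$); the zero eigenspace is spanned by $\{y_iy_j - 2\delta_{ij}\}$; and all $k\geq 3$ modes are strictly stable.

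Next, decompose $v = v_+ + v_0 + v_-$ into projections onto positive, zero, and negative eigenspaces, and set $\alpha_\pm(s) := \|v_\pm\|_{L^2_w}^2$, $\alpha_0(s) := \|v_0\|_{L^2_w}^2$. Testing the equation against $v_+, v_0, v_-$ and using $\|v(\cdot,s)\|_{L^\infty}\to 0$ (a consequence of the $C^\infty_{loc}$ decay and global boundedness) yield Merle-Zaag ODE inequalities of the form
\[
\alpha_+' \geq \alpha_+ - \varepsilon(s)(\alpha_+ + \alpha_0 + \alpha_-), \quad |\alpha_0'|\leq \varepsilon(s)(\alpha_+ + \alpha_0 + \alpha_-), \quad \alpha_-' \leq -\alpha_- + \varepsilon(s)(\alpha_+ + \alpha_0 + \alpha_-)
\]
with $\varepsilon(s)\to 0$ as $s\to-\infty$. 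Together with $\alpha_+ + \alpha_0 + \alpha_-\to 0$, Merle-Zaag's lemma \cite[Lemma A.1]{Merle-Zaag} forces $v_+$ to dominate as $s\to-\infty$: neither $v_-$ (which cannot decay backward under a negative linear flow) nor $v_0$ (by a standard coupling argument) can lead.

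Within $v_+$, either the $k=0$ constant mode or the $k=1$ linear modes lead. To exclude the $k=1$ case, observe that $\tilde w_{x_0}(y,s) := w(y + e^{s/2}x_0, s)$ is also a solution of \eqref{self-similar eqn} for every $x_0\in\R^n$, an extra symmetry coming from the translation invariance of \eqref{eqn}. If the leading part of $v$ were $e^{s/2}a\cdot y$ with $a\neq 0$, a direct computation shows that $\tilde v_{x_0} := \tilde w_{x_0}-\kappa$ keeps the same $k=1$ leading mode but acquires an additional $k=0$ contribution of order $e^s a\cdot x_0$. Running the $k=0$ analysis below on each $\tilde w_{x_0}$ would then match it to a shifted ODE solution $W_{s_0(x_0)}$, with $s_0$ depending nontrivially on $x_0$; but any spatially constant solution is invariant under $x$-translation, producing a contradiction unless $a=0$. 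Hence the $k=0$ mode must lead, and $v(\cdot,s)\sim Ce^s$ in $L^2_w$ for some $C\in\R$.

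If $C=0$, all modes vanish and $v\equiv 0$, so $w\equiv\kappa$. Otherwise, compare $w$ to the family $W_{s_0}(s) := \kappa(1+e^{s-s_0})^{-1/(p-1)}$, which solves $W' = -W/(p-1) + W^p$ and whose linearization at $\kappa$ has leading term $-\tfrac{\kappa}{p-1}e^{s-s_0}$. Choose $s_0$ so that the $k=0$ leading modes of $v$ and of $W_{s_0}-\kappa$ coincide. Then $\phi := w - W_{s_0}$ satisfies a linear equation $\partial_s\phi = \mathcal{L}\phi + b(y,s)\phi$ with $|b(y,s)|\to 0$ as $s\to-\infty$, vanishes in $L^2_w$ at $-\infty$, and by construction has its $k=0$ leading mode killed while the $k=1$ mode is excluded by the previous step. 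A second application of the Merle-Zaag dichotomy then forces $\phi\equiv 0$, so $w\equiv W_{s_0}$. The main obstacle in this plan is the exclusion of the $k=1$ leading mode; implementing it rigorously requires careful tracking of how the translation symmetry of \eqref{eqn} lifts through the self-similar renormalization, together with a rigidity statement for the resulting family of translates. The remaining steps are a standard Merle-Zaag analysis and comparison with the explicit ODE family.
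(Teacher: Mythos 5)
Your proposal takes a genuinely different route from the paper, but it has a crucial gap: you never establish what the forward limit of $w$ is as $s\to+\infty$, and this is the heart of the paper's argument.

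The paper's proof has two steps. First, it takes a subsequential limit $w_\infty$ along $s_j\to+\infty$ and uses the energy characterization Theorem~\ref{ME} from the companion paper \cite{Wang-Wei-Wu}, together with the monotonicity $E(w_\infty)\leq E(\kappa)$ (which follows from \eqref{7.1} and \eqref{energy idenity}), to conclude that $w_\infty\in\{0,\kappa\}$. In the supercritical case this is exactly the substitute for Giga--Kohn's Liouville theorem, which is no longer available; the whole point of this paper and \cite{Wang-Wei-Wu} is to replace that step. Second, once both the forward limit ($0$ or $\kappa$) and backward limit ($\kappa$, by hypothesis) are in hand, the paper cites \cite[Theorem~1.4]{Merle-Zaag} as a black box, noting that the subcriticality restriction there was only needed to classify these two limits. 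Your plan goes directly to the backward Merle--Zaag linearization around $\kappa$, which is precisely the machinery inside that black box, so in effect you are trying to reprove Merle--Zaag's theorem rather than invoke it --- and you are doing so without knowing the forward behavior at all.

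The consequence is that your proof cannot close as written. The Merle--Zaag dichotomy, applied backward, can determine the \emph{leading} asymptotic of $v=w-\kappa$ as $s\to-\infty$, but determining leading asymptotics does not on its own identify the full solution. Your final step --- subtract $W_{s_0}$ and claim ``a second application of the Merle--Zaag dichotomy then forces $\phi\equiv 0$'' --- is not justified: choosing $s_0$ to match the $Ce^s$ coefficient only kills the leading $k{=}0$ term, not the full $k{=}0$ projection of $\phi$ for all $s$, and the Merle--Zaag lemma manipulates the full $L^2_w$ projections $\alpha_\pm$, $\alpha_0$, not leading-order coefficients. To promote the conclusion ``same leading asymptotics'' into ``$\phi\equiv 0$'' you need either a backward-uniqueness argument for the perturbed linear flow or the forward convergence to $0$, and you supply neither. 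The $k{=}1$ exclusion via the translation symmetry $\tilde w_{x_0}(y,s)=w(y+e^{s/2}x_0,s)$ is the right idea (this is indeed a solution of \eqref{self-similar eqn}, and this is essentially how Merle--Zaag handle it), but as stated it is circular: it presupposes that each translate is classified by the very $k{=}0$ analysis whose validity depends on having already excluded $k{=}1$. Finally, the forward classification you omit is not just a convenience: without it, nothing you have written prevents the forward limit of $w$ from being some nontrivial $w_\infty\in\mathcal{S}_M\setminus\{0,\pm\kappa\}$, in which case $w$ could not equal any $W_{s_0}$, yet your backward-only analysis would not detect this. The missing ingredient is exactly Theorem~\ref{ME}.
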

 \begin{proof}
 Let $\{s_{j}\}$ be a sequence tending to $+\infty$ and let $w_{j}(y, s)=w(y, s+s_{j})$. By Proposition \ref{prop convergence for self-similar equations}, $\{w_{j}\}$ converges to some $w_{\infty}\in\mathcal{S}_M$ in the sense specified there. Because $w\in L^\infty(\R^n\times(-\infty,+\infty))$, by standard parabolic regularity theory and Arzela-Ascoli theorem, the convergence also holds in $C^\infty_{loc}(\R^n\times(-\infty,+\infty))$.  This implies $w_{\infty}$ is a bounded nonnegative solution of \eqref{SC1}. By the monotonicity of $E(w(\cdot, s))$ (see \eqref{energy idenity} or \cite[Proposition 1']{Giga-Kohn1985}), the energy
 $E(w_{\infty})$ is independent of the choice of the sequence $\{s_{j}\}$, where $E(w_{\infty})$ is the energy of $w_{\infty}$ defined in \eqref{energy}.  Moreover, by the monotonicity of $E(w(\cdot, s))$ and \eqref{7.1},
 \begin{equation}\label{8.9}
 E(w_{\infty})\leq E(\kappa).
 \end{equation}
 On the other hand, Theorem \ref{ME} implies if $w_{\infty}\neq 0, \pm\kappa$, then
 \[E(w_{\infty})>E(\kappa),\]
 which is a contradiction with \eqref{8.9}. 
 Thus  either $w_{\infty}=0$ or $w_{\infty}=\kappa$. 
 
 If $w_{\infty}=\kappa$, then \eqref{energy idenity} (or \cite[Proposition 3]{Giga-Kohn1985})  implies $w\equiv \kappa$. This is exactly (i).
 
 If $w_{\infty}=0$, then  $\lim_{\tau\to+\infty}w(y, \tau)=0$. In this case, it has essentially been proved by Merle and Zaag \cite[Theorem 1.4]{Merle-Zaag} that (ii)  holds. As mentioned in the introduction, in the statement of \cite[Theorem 1.4]{Merle-Zaag}, they assumed that the exponent $p$ satisfies $1<p<\frac{n+2}{n-2}$. However, this condition is only used to prove that if $w\neq\kappa$, then
 \[\lim_{\tau\to -\infty}w(y, \tau)=\kappa ~~\text{or}~~0.\]
 Since we have verified that these two conditions hold in our setting,
 we can repeat their arguments without any modification to conclude the proof.
 \end{proof}
 \begin{coro}\label{classifyancientsolution}
Let $u$ be a positive smooth solution of the equation \eqref{eqn} in $\mathbb{R}^{n}\times (-\infty, -T_{0})$, where $T_{0}\geq 0$. If
\begin{equation}
\lim_{t\to-\infty}(-t)^{\frac{1}{p-1}}u((-t)^{\frac{1}{2}}y, t)=\kappa
\end{equation}
and there exists a positive constant $M$ such that
\begin{equation}\label{lowerbound}
(-T_{0}-t)^{\frac{1}{p-1}}u(x, t)\leq M, \quad\text{in}~\mathbb{R}^{n}\times (-\infty, -T_{0}),
\end{equation}
then there exists $T\in\mathbb{R}$ such that
\begin{equation}\label{ODEsolution}
u(x, t)=\kappa(T-t)^{-\frac{1}{p-1}},\quad\text{in}~\mathbb{R}^{n}\times(-\infty, -T_{0}).
\end{equation}
\end{coro}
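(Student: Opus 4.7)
The plan is to reduce this corollary to Proposition \ref{calssifyingboundednonegativesolution} by performing the self-similar transform based at $(0, -T_{0})$,
\[w(y,s) := (-T_{0}-t)^{\frac{1}{p-1}} u(x,t), \qquad x = (-T_{0}-t)^{\frac{1}{2}} y, \qquad s = -\log(-T_{0}-t).\]
Since $u$ is smooth and positive on $\mathbb{R}^{n}\times(-\infty,-T_{0})$ and $s$ ranges over all of $\mathbb{R}$ as $t$ ranges over $(-\infty,-T_{0})$, the function $w$ is a smooth positive solution of \eqref{self-similar eqn} on $\mathbb{R}^{n}\times\mathbb{R}$, and the hypothesis \eqref{lowerbound} says precisely that $0<w\leq M$ globally.

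The first step I would carry out is to verify $w(\cdot,s)\to \kappa$ in $C^{\infty}_{loc}(\mathbb{R}^{n})$ as $s\to -\infty$, so that Proposition \ref{calssifyingboundednonegativesolution} applies. Writing $e^{-s}=-T_{0}-t$ and performing the change of variables, one has
\[w(y,s) = \left(\frac{e^{-s}}{T_{0}+e^{-s}}\right)^{\frac{1}{p-1}}(-t)^{\frac{1}{p-1}}\, u\bigl((-t)^{\frac{1}{2}}\tilde y,\,t\bigr), \qquad \tilde y := \frac{e^{-s/2}}{(T_{0}+e^{-s})^{1/2}}\,y,\]
and as $s\to-\infty$ the prefactor tends to $1$ and $\tilde y\to y$ locally uniformly. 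The hypothesized blow-down limit then delivers the asymptotic pointwise, which is upgraded to $C^{\infty}_{loc}$ by interior parabolic Schauder estimates applied to the uniformly bounded $w$.

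With the asymptotic profile in hand, Proposition \ref{calssifyingboundednonegativesolution} leaves only two possibilities. If $w\equiv\kappa$, then unwinding the change of variables yields $u(x,t)=\kappa(-T_{0}-t)^{-1/(p-1)}$, which is \eqref{ODEsolution} with $T=-T_{0}$. Otherwise $w(y,s)=\kappa(1+e^{s-s_{0}})^{-1/(p-1)}$ for some $s_{0}\in\mathbb{R}$, which depends only on $s$; using $e^{s}=(-T_{0}-t)^{-1}$, a short algebraic simplification gives
\[u(x,t) = \kappa\bigl((-T_{0}-t)+e^{-s_{0}}\bigr)^{-\frac{1}{p-1}} = \kappa(T-t)^{-\frac{1}{p-1}}, \qquad T:=e^{-s_{0}}-T_{0}>-T_{0},\]
so \eqref{ODEsolution} holds in both cases.

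I do not foresee any essential obstacle: the two genuinely delicate inputs of the theory --- the bounded-solution regime and the identification of the backward asymptotic limit --- are supplied by the hypotheses \eqref{lowerbound} and \eqref{7.1} respectively, and Proposition \ref{calssifyingboundednonegativesolution} does all of the real work. The only mildly technical point is the upgrade of the asymptotic for $w$ from the raw form given in the hypothesis to $C^{\infty}_{loc}$, which is routine once one has the uniform global bound $w\leq M$.
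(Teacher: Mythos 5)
Your proof is correct and follows the same route as the paper: apply the self-similar transform at $(0,-T_0)$ (which is exactly \eqref{newselfsimilartransform}), observe that the bound \eqref{lowerbound} makes the resulting $w$ a bounded positive solution of \eqref{self-similar eqn}, verify the backward-in-time asymptotic $w(\cdot,s)\to\kappa$ in $C^{\infty}_{loc}$, and then invoke Proposition \ref{calssifyingboundednonegativesolution}. The paper's proof is a single sentence that leaves these verifications implicit; your write-up supplies them, in particular the change of variables showing that the $(-t)$-scaled hypothesis and the $(-T_0-t)$-scaled self-similar form agree in the limit $s\to-\infty$, and the Schauder-estimate upgrade to $C^{\infty}_{loc}$ convergence.
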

\begin{proof}
Let $w$ be the self-similar transform of $u$ as defined in \eqref{newselfsimilartransform}. It satisfies the conditions in Proposition \ref{calssifyingboundednonegativesolution}. Hence \eqref{ODEsolution} follows directly from an application of Proposition \ref{calssifyingboundednonegativesolution}.
\end{proof}
The above results are all concerned with the description of solutions backward in time. We also need a result on the continuation of solutions forward in time. This is  a uniqueness result for Cauchy problem to \eqref{eqn} in the setting of suitable weak solutions, but only for a very special class of such solutions.
\begin{lem}\label{lem uniqueness for Cauchy problem}
   Suppose $u\in\mathcal{F}_M$, and there exist $T_0\geq 0, T\in\mathbb{R}$ such that in $\R^n\times(-\infty,-T_0)$, 
   \begin{equation}\label{ODE solution as initial value}
   u(x,t)\equiv \kappa(T-t)^{-\frac{1}{p-1}}.
   \end{equation}
   Then $T\geq 0$ and \eqref{ODE solution as initial value} holds in the entire $\R^n\times(-\infty,0)$.
\end{lem}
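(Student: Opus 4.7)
Write $v(x, t) := \kappa(T-t)^{-1/(p-1)}$ for the ODE solution on its natural domain. The hypothesis says $u \equiv v$ on $\mathbb{R}^n \times (-\infty, -T_0)$. First note that $T > -T_0$, for otherwise $v$ would blow up inside $(-\infty, -T_0)$ and $\int (T-t)^{-(p+1)/(p-1)}\,dt$ would diverge near $-T_0^-$, contradicting $u \in L^{p+1}_{\mathrm{loc}}$, which is part of being a suitable weak solution. Introduce the maximal agreement time
\[
T^* := \sup\bigl\{t_1 \in [-T_0, 0] : u \equiv v \text{ on } \mathbb{R}^n \times (-\infty, t_1)\bigr\} \in [-T_0, 0].
\]
The plan is to prove $T^* \geq \min(T, 0)$ by a continuation argument past $T^*$, and then to exclude $T<0$ via integrability.

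Assume for contradiction that $T^* < \min(T, 0)$. The key step is to extend smoothness of $u$ past $T^*$ via the $\varepsilon$-regularity of Proposition \ref{epsilonregularity}, applied at fixed scales that keep all underlying monotonicity-formula integrals inside the known region $\{t < T^*\}$. Fix $s_0 > 0$ small, with $s_0 < (T-T^*)/4$ and $s_0 < 1/4$, and set $\delta := s_0/2$. For any $x_0 \in \mathbb{R}^n$ and $t_0 \in (T^*, T^* + s_0)$, apply the $\varepsilon$-regularity at base point $(x_0, t_0)$ with monotonicity radius $s := s_0 \geq \delta$. For $(x, t) \in Q_{2\delta}^-(x_0, t_0)$ and $\tau \in [s_0, 2s_0]$, the inequality $t - \tau \leq t_0 - s_0 < T^*$ holds, so $u \equiv v$ on these time slices, and since $\nabla v \equiv 0$ the monotonicity quantity reduces to
\[
E(\tau; x, t, u) = \kappa^{p+1}\Bigl[-\tfrac{1}{p+1} A_\tau^{(p+1)/(p-1)} + \tfrac{1}{2} A_\tau^{2/(p-1)}\Bigr], \qquad A_\tau := \tfrac{\tau}{T-t+\tau}.
\]
Since $A_\tau = O(s_0/(T-T^*))$ can be made arbitrarily small by shrinking $s_0$, one obtains $\bar E(s_0; x, t, u) < \varepsilon_0$, and the $\varepsilon$-regularity delivers the uniform-in-$(x_0, t_0)$ bound $|u| \leq C_0\delta^{-2/(p-1)}$. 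Consequently $u$ is uniformly bounded, hence by parabolic regularity smooth, on $\mathbb{R}^n \times (T^*, T^* + s_0)$.

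On this extension the difference $W := u - v$ satisfies the linear parabolic equation $\partial_t W - \Delta W = bW$ with $b(x, t) := p\int_0^1 (v + \sigma W)^{p-1}\,d\sigma$ globally bounded. By Remark \ref{rmk continuity of blow up sequence in L2}, applied at a base point with time coordinate above $T^*$, together with the constancy in $x$ of $v(\cdot, T^*) = \kappa(T-T^*)^{-1/(p-1)}$, $W(\cdot, t) \to 0$ in (a weighted) $L^2$ as $t \to T^{*+}$. A weighted energy estimate against the Gaussian $\rho$, combined with Gronwall (whose constant is controlled by $\|b\|_\infty < \infty$), then forces $W \equiv 0$ on $\mathbb{R}^n \times (T^*, T^* + s_0)$, contradicting the maximality of $T^*$. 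Thus $T^* \geq \min(T, 0)$. Finally, if $T < 0$ then $u \equiv v$ on $(-\infty, T)$, so $u^{p+1} = \kappa^{p+1}(T-t)^{-(p+1)/(p-1)}$ is not integrable in $t$ near $T$, contradicting $u \in L^{p+1}_{\mathrm{loc}}$ at $(x_0, T)$. Therefore $T \geq 0$ and $u \equiv v$ on all of $\mathbb{R}^n \times (-\infty, 0)$. The main subtlety is the scale selection in the middle paragraph: choosing $s_0$ and $\delta$ fixed (rather than shrinking to zero) is what keeps the $L^\infty$ bound from the $\varepsilon$-regularity uniform in time, and ultimately keeps the Gronwall constant in the uniqueness step finite.
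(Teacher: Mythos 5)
Your proof is correct and follows the same broad strategy as the paper: rule out $T \leq -T_0$ by $L^{p+1}$-integrability on a backward cylinder inside the agreement region, show $u$ stays bounded slightly past the agreement time via $\varepsilon$-regularity, use $L^\infty$ uniqueness for the Cauchy problem to propagate the identity, and iterate. The genuine difference in execution is the boundedness-extension step: you verify the $\varepsilon$-regularity hypotheses directly, by evaluating the Giga--Kohn quantity $E(\tau;x,t,u)$ on the ODE profile and exploiting that for $t < T^* + s_0$ and $\tau \in [s_0, 2s_0]$ the time slice $t-\tau$ lies in the region where $u \equiv v$; the paper instead invokes Lemma \ref{lem L2 convergence lift to smooth} (whose proof is itself packaged $\varepsilon$-regularity) and then continues. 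Your ``maximal agreement time'' $T^*$ makes the continuation explicit where the paper compresses it into ``by a continuation in time.'' Both are valid; yours is more self-contained, the paper's shorter. Two small cleanups you should make: (i) the appeal to Remark \ref{rmk continuity of blow up sequence in L2} is unnecessarily indirect --- once $\varepsilon$-regularity plus parabolic regularity makes $u$ smooth on a two-sided neighborhood of $\{t=T^*\}$, the initial condition $W(\cdot, T^*) = 0$ follows from ordinary continuity in $t$ and the fact that $u = v$ for $t < T^*$; (ii) in the opening paragraph the non-integrability occurs as $t \to T^-$ (not ``$-T_0^-$''), and the point is that $T \leq -T_0 < 0$ would place $Q_1^-(0,T) \subset \R^n \times (-\infty, 0)$ entirely in the agreement region, where $v \notin L^{p+1}$ of that cylinder.
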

\begin{proof}
First, if $T<-T_{0}$, then 
\[\int_{Q^{-}_{1}(0, T)}|u|^{p+1}dxdt=\kappa^{p+1}\int_{Q_{1}^{-}}(-t)^{\frac{p+1}{p-1}}dxdt=+\infty.\]
This contradicts our assumption that $u\in\mathcal{F}_{M}$. Therefore,  $T\geq -T_{0}$. In particular,   if $T_{0}=0$, then $T\geq 0$ and the proof is complete.

If $T_{0}>0$, the above analysis also imply $T> -T_{0}$. This yields $u$ is uniformly bounded on $\mathbb{R}^{n}\times (-\infty, -T_{0})$.
   By Lemma \ref{lem L2 convergence lift to smooth}, we can find a positive constant $s_{\ast}$ such that for any $(x,t)\in\R^n\times[-T_0,T_0+s_{\ast}]$,
    $u$ is smooth and bounded in $\R^n\times[-T_0,-T_0+s_{\ast}]$. Because solution to the Cauchy problem for \eqref{eqn} is unique  in $L^\infty(\R^n)$, \eqref{ODE solution as initial value} holds up to $-T_0+s_{\ast}$. Then by a continuation in time, we get the conclusion.
\end{proof}

 \begin{proof}[Proof of Proposition \ref{preproposition}]
  It is a direct consequence of Lemma \ref{lem2024-0416}, Corollary \ref{classifyancientsolution} and Lemma \ref{lem uniqueness for Cauchy problem}.
  \end{proof}

 \section{Proof of Theorem \ref{maintheorem1}}
 	\setcounter{equation}{0}

 In this section, we prove Theorem \ref{maintheorem1}. The proof is based on compactness and uniqueness of solutions to the Cauchy problem of \eqref{eqn}.

 \begin{proof}[Proof of Theorem \ref{maintheorem1}]
  Let $RT$ be the function defined by Definition \ref{definefunctionT}. 
  
  First assume   $RT$ is bounded, that is, there exists a positive constant $T_{0}$ such that
  \[RT(x)<T_{0},\quad\text{in}~\mathbb{R}^{n},\]
  By the definition of $RT$, 
 $u$ is smooth and positive in $\mathbb{R}^{n}\times (-\infty, -T_{0})$. Moreover, we have
 \[|\Delta u|\leq\frac{1}{2} u^{p},\quad\text{in}~\mathbb{R}^{n}\times (-\infty, -T_{0}).\]
  Thus we can apply Proposition \ref{preproposition} and Lemma \ref{lem uniqueness for Cauchy problem} to conclude the proof.

  In the rest of the proof, we may assume that $RT$ is unbounded.
  By Lemma \ref{Tproperties1},
  \begin{equation}\label{9.1}
      RT(x)=o(|x|^{2}), \quad\text{as}~|x|\rightarrow+\infty.
  \end{equation}
 Let
 \[f(r)=\sup_{x\in B_{r}(0)}RT(x),\]
 which is an increasing function of $r$ and satisfies $\lim_{r\to\infty}f(r)=\infty$.
 
 {\bf Claim.} There exists a sequence $\{r_{k}\}$ such that $\lim_{k\to\infty}r_{k}=\infty$ and  for each $k$, \[f(2r_{k})\leq 4f(r_{k}).\]

Assume this claim is false. Then there exists an $R$ such that for all $r\geq R$, 
\[f(2r)>4f(r).\]
Without loss of generality, assume $R=2^{k_0}$ for some $k_0\in\mathbb{N}$, so we have
  \[f(2^{k})>4f(2^{k-1}), \quad \forall k\geq k_0.\]
 An iteration of this inequality gives for each $k\geq k_{0}+1$,
 \[f(2^{k})>4^{k-k_{0}}f(2^{k_{0}}).\]
 For each $k\geq k_{0}+1$, let $x_{k}\in B_{2^{k}}(0)$ be a point such that
 $RT(x_{k})>f(2^{k})/4$, then
 \[RT(x_{k})>\frac{f(2^{k})}{4}>4^{-1-k_{0}}f(2^{k_{0}}) 4^{k} \geq 4^{-1-k_{0}}f(2^{k_{0}})|x_{k}|^{2}.\]
Letting $k\to\infty$, we see \[\limsup_{|x|\to\infty}\frac{RT(x)}{|x|^{2}}\geq 4^{-1-k_{0}}f(2^{k_{0}})>0.\]
This contradicts \eqref{9.1}. The claim is thus proved. 

 By this claim,   there exists a sequence $\{x_{k}\}$ satisfying $\lim_{k\rightarrow+\infty}|x_{k}|=+\infty$ and
 \[\sup\limits_{B_{R_{k}}(x_{k})}RT(x)\leq 4RT(x_{k}),\]
 where  $R_{k}=|x_{k}|$.  
 
 Denote $\lambda_{k}=RT(x_{k})^{\frac{1}{2}}$. Set
 \[u_{k}(x, t)=\lambda_{k}^{\frac{2}{p-1}}u(x_k+\lambda_{k}x, \lambda_{k}^{2}t)\]
and
 \[RT_{k}(x)=\lambda_{k}^{-2}RT(x_{k}+\lambda_{k}x).\]
 Then
 \begin{equation}\label{Tscaling}
 \sup\limits_{B_{R_{k}/\lambda_{k}}(0)}RT_{k}(x)\leq 4,\quad RT_{k}(0)=1.
 \end{equation}
 By Lemma \ref{Tproperties1}, we know that
 \[\frac{R_{k}}{\lambda_{k}}\rightarrow \infty,\quad\text{as}~k\rightarrow\infty.\]
By Proposition \ref{prop compactness}, $u_k$ converges to a limit $u_\infty$ in the sense specified there, where $u_\infty\in\mathcal{F}_{M}$ is an ancient solution of
\eqref{eqn}.

 On the other hand, as in the proof of Lemma \ref{lem2024-0416}, using \eqref{Tscaling} we can show that there exists a constant $M$ independent of $k$ such that
\[0< u_{k}\leq M(-4-t)^{-\frac{1}{p-1}}, \quad\text{in}~B_{R_{k}/\lambda_{k}}(0)
\times (-\infty, -4).\]
By standard parabolic estimates and   Arzela-Ascoli theorem,  $u_{k}\to u_{\infty}$ in $C^\infty_{loc}(\mathbb{R}^{n}\times (-\infty, -4))$. In particular, $u_{\infty}\in C^\infty(\mathbb{R}^{n}\times (-\infty, -4))$.

 Letting $k\to\infty$ in \eqref{Tscaling}, we obtain
 \begin{equation}\label{Tinftysacling}
 |\Delta u_{\infty}|\leq \frac{1}{2} u_{\infty}^{p},\quad\text{in}~\mathbb{R}^{n}\times (-\infty, -4).
 \end{equation}
 
In conclusion, $u_{\infty}$ is an ancient solution of \eqref{eqn}, and  in $\mathbb{R}^{n}\times(-\infty, -4)$, it is positive, smooth and  \eqref{Tinftysacling} holds. By Proposition \ref{preproposition},  $u_{\infty}$ depends only on $t$. Because $u_{\infty}$ is an ancient solution of \eqref{eqn}, by Lemma \ref{lem uniqueness for Cauchy problem}, there exists $T\geq 0$ such that
\begin{equation}\label{8.3}
u_{\infty}(x, t)=\kappa(T-t)^{-\frac{1}{p-1}} \quad \text{in} ~ \R^n\times(-\infty,0).
\end{equation}
In particular, $u_{\infty}$ is smooth on $\mathbb{R}^{n}\times(-\infty, 0)$. Then by  Corollary \ref{cor L2 convergence lift to smooth}, we deduce that $u_{k}\to u_{\infty}$ in $C^\infty_{loc}(\mathbb{R}^{n}\times (-\infty, 0))$. In particular, for all $k$  large enough,   $u_{k}$ is smooth in a fixed neighborhood of $(0, -1)$. Since 
$RT_{k}(0)=1$,  there are only two possibilities: either $u_{k}(0, -1)=0$ or
\[|\Delta u_{k}(0, -1)|=\frac{1}{2} u_{k}(0, -1)^{p}.\]

If there exists a subsequence $k_{j}$ such that $u_{k_{j}}(0, -1)=0$, then $u_{\infty}(0, -1)=0$, which is impossible by \eqref{8.3}. 

If there exists a subsequence $k_{j}$ such that 
\[|\Delta u_{k_{j}}(0, -1)|=\frac{1}{2} u_{k_{j}}(0, -1)^{p},\]
then passing to the limit we obtain
\[|\Delta u_{\infty}(0, -1)|=\frac{1}{2} u_{\infty}(0, -1)^{p}=\frac{1}{2}\kappa^p(T+1)^{-\frac{p}{p-1}}\neq 0.\]
On the other hand, \eqref{8.3} gives
\[|\Delta u_{\infty}(0, -1)|=0.\]
We arrive at a contradiction again.

Since we can get a contradiction in both cases, we conclude that a sequence of solutions of \eqref{eqn} satisfying \eqref{Tscaling} can not exist. This contradicts our assumption, so the proof of Theorem \ref{maintheorem1}  is  complete.
 \end{proof}
\begin{proof}[Proof of Corollary \ref{Low-entropy ancient solutions}]
Let $\{\lambda_{k}\}$ be a sequence such that $\lambda_{k}\to\infty$ as $k\to\infty$. For each $k$, we set
\[u_{k}(x, t)=\lambda_{k}^{\frac{2}{p-1}}u(\lambda_{k}x, \lambda_{k}^{2}t).\]
By Proposition \ref{prop blow-down limit}, there exists a function $w_{\infty}\in\mathcal{S}_{M}$ such that \[u_{k}(x, t)\to(-t)^{-\frac{1}{p-1}}w_{\infty}\left(\frac{x}{\sqrt{-t}}\right),\quad\text{as}~k\to\infty.\]
Since
\[\sup_{t\in (-\infty, 0)}\lambda(u(\cdot, t))\leq \left(\frac{1}{2}-\frac{1}{p+1}\right)\left(\frac{1}{p-1}\right)^{\frac{p+1}{p-1}},\]
then
\[E(w_{\infty})\leq \left(\frac{1}{2}-\frac{1}{p+1}\right)\left(\frac{1}{p-1}\right)^{\frac{p+1}{p-1}}.\]
By \cite[Theorem 1.7]{Wang-Wei-Wu} and its proof, and because $w_\infty$ is non-negative,  either $w_{\infty}=0$ or $w_{\infty}=\kappa$. 

If $w_{\infty}=0$,  then $E(w_\infty)=0$. Combining Lemma \ref{Monotonicity formula} (monotonicity formula), Proposition \ref{prop compactness} and the scaling invariance of the monotonicity quantity, we obtain
\[ E(w_\infty)=\lim_{k\to+\infty}E(1;0,0,u_k)=\lim_{s\to+\infty} E(s;0,0,u).\]
In view of Corollary \ref{coro 3.7}, this implies that for any $s>0$,
\[E(s;0,0,u)\equiv 0.\]
Applying Lemma \ref{Monotonicity formula} once again, we deduce that $u$  is backward self-similar. Let $w=u(-1)$ be the restriction of $u$ to the time $-1$ slice, which is a solution to the equation \eqref{SC1}. Testing this equaiton with $w$, we obtain
\[\int_{\R^n}\left[|\nabla w|^2+\frac{1}{p-1}w^2-|w|^{p+1}\right] \rho dy=0.\]
Combining this equality with $E(w)=0$, we deduce that $w\equiv 0$. Because $u$ is self-similar, we also have $u\equiv 0$.

If $w_{\infty}=\kappa,$ then we can apply Theorem \ref{maintheorem1} to get a constant $T\geq 0$ such that
\[u(x, t)=\kappa(T-t)^{-\frac{1}{p-1}},\quad\text{in}~\mathbb{R}^{n}\times (-\infty, 0). \]
The proof is complete.
\end{proof}

 \section{Proof of Theorem \ref{maintheorem2}}
 	\setcounter{equation}{0}

In this section, we   use Theorem \ref{maintheorem1} to prove Theorem \ref{maintheorem2}. The proof is basically a contradiction argument, to show that if the conclusions in Theorem \ref{maintheorem2} do not hold, then we can construct (by suitable rescaling) an ancient solutions violating Theorem \ref{maintheorem1}.

In the setting of Theorem \ref{maintheorem2}, $u$ is only defined in $Q_1^-$, so let us first modify it to a function defined in $\R^n\times(-1,0)$.  Let $\eta$ be a standard cut off function satisfying
$\eta=1$ in $B_{1/2}(0)$, $\eta=0$ outside
$B_1(0)$ and $|\nabla\eta|+|\Delta\eta|\leq C$.

Define \[\widetilde{u}=\eta u+(1-\eta)u_0,\]
where  $u_0=\kappa(-t)^{-\frac{1}{p-1}}$ is the ODE solution defined in \eqref{ODE solution}. A direct calculation shows that $\widetilde{u}$ satisfies
\begin{equation}\label{eqn modified}
\partial_{t}\widetilde{u}-\Delta\widetilde{u}=|\widetilde{u}|^{p-1}\widetilde{u}+h
\end{equation}
with
\begin{equation}\label{definition of h}
    h:= \eta |u|^{p-1}u+(1-\eta)u_{0}^{p}-|\widetilde{u}|^{p-1}\widetilde{u}-\Delta\eta u+\Delta\eta u_0-2\nabla\eta\cdot\nabla u.
\end{equation}
By the Morrey space estimate on $u$ and the explicit form of $u_0$, we get
\begin{equation}\label{estimate on h}
   \int_{Q_r^-(x,-r^2)}|h|^{\frac{p+1}{p}}\leq C r^{n+2-m}, \quad \forall x\in \R^n, ~  0<r<1.
\end{equation}
Hence if we define the monotonicity quantity $E(s;x_0,0,\widetilde{u})$ as in Section 3, and if $x_0\in B_{1/4}$, then there exists a universal constant $C$ such that
\begin{equation}\label{approximate monotonicity formula}
   E(s;x_0,0,\widetilde{u})+Ce^{-C/s}
\end{equation}
is an increasing function of $s$. With this modification, all results in Section 3 still hold for $\widetilde{u}$.

\begin{proof}[Proof of Theorem \ref{maintheorem2}]

\ref{item:uniqueness} The uniqueness of the tangent function has been established in Remark \ref{pro uniquenessofblowup}.

\ref{item:stability}  Assume by the contrary that there exists a sequence $\{(x_{k}, 0)\}\subset \text{Sing}(u)$ such that $\lim_{k\rightarrow\infty}x_{k}=0$, but for each $k$,
\begin{equation}\label{blowupsetscondition}
 \kappa\not\in\mathcal{T}(x_{k},0; u).
 \end{equation}

 Recall that $\varepsilon_1$ is the  constant in Theorem \ref{thm rigidity 1}. Because $\mathcal{T}(0,0;u)=\{\kappa\}$, by Proposition \ref{prop convergence for self-similar equations}, there exists $\lambda_\ast>0$ such that for all $\lambda\in(0,\lambda_\ast)$,  
 \[ \|u_{0,0,\lambda}(\cdot, -1)-\kappa\|_{L^2_w(\R^n)}<\varepsilon_1.\]
For any  $\lambda$ fixed, because $u_{x_k,0,\lambda}$ is just a translation of $u_{0,0,\lambda}$ in the spatial direction and $|x_k|/\lambda\to 0$ as $k\to\infty$, we see  
for all  $k$ large,
 \[ \|u_{x_k,0,\lambda}(\cdot, -1)-\kappa\|_{L^2_w(\R^n)}<\varepsilon_1.\]
On the other hand, combining \eqref{blowupsetscondition} with Theorem \ref{thm rigidity 1}, we deduce that for any $k$, there exists $\lambda_k^\prime$ such that
\[ \|u_{x_k,0,\lambda_k^\prime}(\cdot, -1)-\kappa\|_{L^2_w(\R^n)}>\varepsilon_1.\]
Then by the continuity of $u_{x_k,0,\lambda}(\cdot, -1)$ in $\lambda$ with respect to the $L^2_{w}(\R^n)$ distance (see Remark \ref{rmk continuity of blow-up sequence in L2}), there exists 
 a sequence $\{\lambda_{k}\}$ tending to  $0$ such that 
\[\|u_{x_k,0,\lambda}(\cdot, -1)-\kappa\|_{L^2_w(\R^n)}<\varepsilon_1, \quad\forall\lambda\in (\lambda_{k},1)\] 
and
 \[\|u_{x_k,0,\lambda_{k}}(\cdot, -1)-\kappa\|_{L^2_w(\R^n)}=\varepsilon_1.\] 
 If we set
 \begin{equation}\label{defineresacleuk}
u_{k}(x, t)=\lambda_{k}^{\frac{2}{p-1}}u(x_{k}+\lambda_{k}x, \lambda_{k}^{2}t),
\end{equation}
then 
	\begin{equation}\label{uktcondition}
		\left\{\begin{aligned}
			&\|(u_{k})_{0,0, \frac{\lambda}{\lambda_{k}}}(\cdot,-1)-\kappa\|_{L^2_w(\R^n)}< \varepsilon_1,   &\forall ~ \lambda\in (\lambda_{k},1), \\
			&\|(u_{k})_{0,0, 1}(\cdot,-1)-\kappa\|_{L^2_w(\R^n)}=\varepsilon_1.
		\end{aligned}\right.
	\end{equation}

For each $k$, the function $u_{k}$ defined in \eqref{defineresacleuk} is  a suitable weak solution of \eqref{eqn} in $B_{\lambda_k^{-1}/2}(0)\times \left(-\lambda_{k}^{-2}, 0\right)$.
Similar to Proposition \ref{prop compactness}, with the help of the Morrey estimate \eqref{Morreyeatimates} (which is scaling invariant), we get a function $u_{\infty}\in\mathcal{F}_M$ such that as $k\to\infty$,
 $u_{k}\to u_{\infty}$ in $\R^n\times(-\infty,0)$.
 
Passing to the limit in \eqref{uktcondition} leads to
\begin{equation}\label{uinftytcondition}
		\left\{\begin{aligned}
			&\|(u_{\infty})_{0, 0, -t}(\cdot,-1)-\kappa\|_{L^2_w(\R^n)}\leq \varepsilon_1,   &\forall ~t\in \left(-\infty, -1\right),\\
			&\|(u_{\infty})_{0, 0, 1}(\cdot, -1)-\kappa\|_{L^2_w(\R^n)}=\varepsilon_1.
		\end{aligned}\right.
	\end{equation}

Let $\{\mu_{k}\}$ be any sequence tending to $\infty$ as $k\to\infty$. For each $k$, we define 
\[u_{\infty, k}(x, t)=\mu_{k}^{\frac{2}{p-1}}u_{\infty}(\mu_{k}x, \mu_{k}^{2}t).\]
By Proposition \ref{prop blow-down limit}, there exists a function $w_{\infty}\in\mathcal{S}_{M}$ such that
\[u_{\infty, k}\to(-t)^{-\frac{1}{p-1}}w_{\infty}\left(\frac{x}{\sqrt{-t}}\right),\quad\text{as}~k\to\infty.\]
The first condition in \eqref{uinftytcondition} implies 
\[\|w_{\infty}-\kappa\|_{L^2_w(\R^n)}\leq \varepsilon_1.\]
By Theorem \ref{thm rigidity 1}, we must have $w_\infty\equiv\kappa$, in other words, the blow-down limit of $u_\infty$ is $\kappa$. Then by Theorem \ref{maintheorem1}, we deduce that  $u_{\infty}=\kappa(T-t)^{-\frac{1}{p-1}}$ for some $T\geq0$. 

Because $(x_k,0)$ is a singular point of $u$, $(0,0)$ is a singular point of $u_k$. Then by Corollary \ref{coro convergence of singular points}, $(0,0)$ is also a singular point of $u_\infty$. Thus $T=0$ and  $u_{\infty}=\kappa(-t)^{-\frac{1}{p-1}}$. As a consequence, $u_\infty(x,-1)\equiv\kappa$.
This gives a contradiction with the second condition in \eqref{uinftytcondition}. The proof  of \ref{item:stability} is thus complete.

\ref{item:positivity} and \ref{item:Type I}: These two items can be proved by the same argument, so here we only show how to get the Type I blow-up rate.

Assume by the contrary, there exists a sequence $(x_k,t_k)\to(0,0)$ such that either it is a singular point of $u$ or it is a regular point but
 \begin{equation}\label{absurd assumption to Type I}
     \lim_{k\to\infty}\frac{|u(x_k,t_k)|}{(-t_k)^{-\frac{1}{p-1}}}=+\infty.
 \end{equation}
Define
\[ \widetilde{u}_k(x,t):=(-t_k)^{\frac{1}{p-1}} u\left(x_k+(-t_k)^{\frac{1}{2}}x, (-t_k)t\right).\]
As in the proof of \ref{item:stability}, $\{\widetilde{u}_k\}$ converges to some $\widetilde{u}_\infty\in\mathcal{F}_M$ in $\mathbb{R}^n\times(-\infty,0)$. Combining  \eqref{absurd assumption to Type I} with Corollary \ref{cor L2 convergence lift to smooth},  we deduce that $(0,-1)$ is a singular point of $\widetilde{u}_\infty$. As a consequence, $\widetilde{u}_\infty$ cannot be ODE solutions.

Then arguing as in the proof of \ref{item:stability}, we find a sequence $\lambda_k\to0$ such that
\[ u_k(x,t):=\lambda_k^{\frac{2}{p-1}}u(x_k+\lambda_k x, \lambda_k^2 t)\]
satisfies \eqref{uktcondition}. This leads to a contradiction exactly in the same way as there.
\end{proof}

\section{Proof of Theorem \ref{maintheorem3}}
\setcounter{equation}{0}

This section is devoted to the proof of Theorem \ref{maintheorem3} and its corollaries.

\begin{proof}[Proof of Theorem \ref{maintheorem3}]
We divide the proof into two steps. 

In the first step we use Almgren stratification (as used by White \cite{White1997} for some other parabolic problems) to decompose the singular set $\text{Sing}(u)$ into two parts, $\mathcal{R}$ and $\mathcal{S}$. For any point in $\mathcal{R}$, any tangent function at it has enough symmetry, so the tangent function must be the ODE solution by the Liouville theorem of Giga-Kohn \cite{Giga-Kohn1985}. Then the openness of $\mathcal{R}$ is a consequence of the stability result in Theorem \ref{maintheorem2}. The Hausdorff dimension estimate on the closed set $\mathcal{S}$  follows from an application of Federer dimension reduction principle.

In the second step, we show that $\mathcal{R}$ is $(n-1)$-rectifiable, by using a theorem of Vel\'{a}zquez \cite{Velazquez1993}. Although this theorem is only stated for the subcritical case in \cite{Velazquez1993},  we observe that in Theorem \ref{maintheorem2}, we have already established a local Type I blow-up bound and the uniqueness of tangent functions. With these results in hands, the method in Vel\'{a}zquez \cite{Velazquez1993}  is still applicable to the current supercritical setting.

{\bf Step 1. Decomposition into $\mathcal{R}$ and $\mathcal{S}$.}  Let $u$ be a solution of the equation \eqref{eqn} satisfying the assumptions in Theorem \ref{maintheorem2}. Define the stratification of   the blow-up set  $\text{Sing}(u)$ to be
\[\mathcal{S}_{0}\subset\mathcal{S}_{1}\cdots\subset\mathcal{S}_{n}=\text{Sing}(u),\]
where $\mathcal{S}_{k}$ consists of  all blow-up points whose  tangent functions are at most translation invariant in $k$ directions. By \cite[Theorem 8.1]{White1997}, the Hausdorff dimension of $\mathcal{S}_k$ is at most $k$.

Set
\[\mathcal{R}=\text{Sing}(u)\backslash \mathcal{S}_{n-[m]-1}, \quad  \mathcal{S}=\mathcal{S}_{n-[m]-1}.\]
Then $\text{Sing}(u)=\mathcal{R}\cup \mathcal{S}$, and the Hausdorff dimension of  $\mathcal{S}$ is at most $n-[m]-1$.
Hence Item \ref{item:Hausdorff} is proved.

If $x\in\mathcal{R}$, then there is at least one tangent function $w$ which is  translation invariant in $n-[m]$ directions. Hence $w$ can be viewed as a solution of \eqref{SC1} in $\mathbb{R}^{[m]}$. Because in dimension $[m]$, $p$ is subcritical, by the Liouville theorem of Giga-Kohn \cite{Giga-Kohn1985}\footnote{This Liouville theorem is stated for bounded solutions of \eqref{SC1}, but because its proof involves only an application of Pohozaev identity, it also holds for solutions in $\mathcal{S}_M$. Indeed, by using the stationary condition in the definition of suitable weak solutions and the Morrey space bound \eqref{Morreyeatimates}, we can get the same Pohozaev identity.},  $w=\pm\kappa$. By Theorem \ref{maintheorem2},   $\mathcal{T}(x, 0; u)=\{\kappa\}$ or $\{-\kappa\}$. This allows us to define $\mathcal{R}^\pm$. Furthermore, Theorem \ref{maintheorem2} \ref{item:stability} implies that there is a neighborhood of $x$ satisfying this property. This proves the openness of $\mathcal{R}^\pm$, that is, Item \ref{item:openness}.

{\bf Step 2. Rectifiablity of $\mathcal{R}$.} 
It remains to prove Item \ref{item:rectifibility}. Since the case $x\in\mathcal{R}^{-}$ can be discussed similarly, we will focus on the case $x\in\mathcal{R}^{+}$.

 We get from Theorem \ref{maintheorem2} that  for any $x\in\mathcal{R}^{+}$, there exist two positive constants $\delta$ and $C$ such that, for any $(y,t)\in Q_\delta^-(x,0)$,
    \[0<u(y,t)\leq C(-t)^{-\frac{1}{p-1}}.\] Moreover, if $y\in B_{\delta}(x)\cap\mathcal{R}^{+}$, then $\mathcal{T}(y,0; u)=\{\kappa\}$. After a translation and scaling, we may assume that $x=0$ and $\delta=1$.

Let $\widetilde{u}$ be the modification of $u$ introduced at the beginning of  the previous section, with $h$ as defined in \eqref{definition of h}. 
By Theorem \ref{maintheorem2} \ref{item:Type I} and standard parabolic regularity theory, there exists a positive constant $C$ such that
\[|h|\leq C(-t)^{-\frac{p}{p-1}},\quad\text{in}~\mathbb{R}^{n}\times (-1, 0).\]
Moreover, $\text{spt}(h)\subset (B_{1}(0)\backslash B_{1/2}(0))\times (-\infty, 0)$.

Take an arbitrary point $(x,0)\in\text{Sing}(u)\cap B_{1/4}(0)$.
Let
$\widetilde{w}$ be the self-similar transform of $\widetilde{u}$ with respect to $(x,0)$.
Then $\widetilde{w}$ satisfies
\[\partial_{\tau}\widetilde{w}-\Delta\widetilde{w}+\frac{y}{2}\cdot\nabla\widetilde{w}+\frac{1}{p-1}\widetilde{w}=\widetilde{w}^{p}+g,\]
where $g$ is a bounded function satisfying $\text{spt}(g(\cdot, \tau))\subset\{e^{\frac{\tau}{2}}\leq |y|\leq 2e^{\frac{\tau}{2}}\}$.

Let $\psi=\widetilde{w}-\kappa$. It satisfies
\begin{equation}\label{linearizedequation}
\partial_{\tau}\psi-\Delta\psi+\frac{y}{2}\cdot\nabla\psi-\psi=f(\psi)+g,
\end{equation}
where
\[f(\psi)=(\psi+\kappa)^{p}-\kappa^{p}-p\kappa^{p-1}\psi.\]
Because the tangent function of $u$ at $(x,0)$ is $\kappa$,  $\psi(\cdot, \tau)\to 0$
as $\tau\to+\infty$ in $C_{loc}^{\infty}(\mathbb{R}^{n})$.

Recall that the operator $A:=-\Delta+\frac{y}{2}\cdot\nabla$ is a self-adjoint operator on $L_\omega^2(\mathbb R^n)$ with domain $D(A):=H_{\omega}^1(\mathbb R^n)$. Its spectrum consists of  eigenvalues
\begin{equation*}\label{110}
\lambda_\alpha= \frac{|\alpha|}{2},\quad |\alpha|=0,~1,~2...
\end{equation*}
and the corresponding eigenfunctions are given by
\begin{equation*}\label{111}
H_\alpha(y)=H_{\alpha_1}(y_1)...H_{\alpha_n}(y_n),
\end{equation*}
where $H_{\alpha_i}(y_i)=\bar{c}_{\alpha_i}\widetilde{H}_{\alpha_i}(y_i/2)$, $\bar{c}_{\alpha_i}=\Big(2^{\alpha_i/2}(4\pi)^{1/4}(\alpha_i!)\Big)^{-1}$ and $\widetilde{H}_{\alpha_i}(y_i)$ is the standard Hermite polynomial. It is well known that $\{H_\alpha\}$ is an orthonormal basis of $L_\omega^2(\mathbb R^n)$. 

The following theorem is the main result of Vel\'{a}zquez \cite{Velazquez1993}. 
\begin{thm}\label{thm next order expansion near ODE blow-up}
If $\psi\not\equiv 0$, we have the following two possibilities. 
\begin{enumerate}
    \item 
Either there exists an orthogonal transformation of coordinate axes,  still denoted by $y$, such that
\begin{equation}\label{112}
\psi(y,\tau)=\frac{C_p}{\tau}\sum_{k=1}^{\ell}H_2(y_k)+o\left(\frac{1}{\tau}\right),~\text{as}~\tau\to\infty,
\end{equation}
where $1\leq \ell\leq n$ and $C_p=((4\pi)^{1/4}(p-1)^{-1/(p-1)})/\sqrt{2}p$, 
\item or there exist an even number $m\geq 4$ and constants $C_\alpha$ not all zero such that
\begin{equation}\label{113}
\psi(y,\tau)=e^{(1-\frac{m}{2})\tau}\sum_{|\alpha|=m}C_\alpha H_\alpha(y)+o\left(e^{(1-\frac{m}{2})\tau}\right),~\text{as}~\tau\to\infty,
\end{equation}
where the homogeneous multilinear form $B(x)=\sum_{|\alpha|=m}C_\alpha  x^\alpha$ is nonnegative.
\end{enumerate}
Moreover, the convergence in both cases takes place in $H_{\omega}^1(\mathbb R^n)$ and $C_{loc}^\infty(\mathbb R^n)$.
\end{thm}
With the Type I blow-up rate \eqref{Type I blow-up rate}, the proof in \cite{Velazquez1993} (which is stated only for the subcritical case) can be extended to our setting without any change. Therefore, the remaining proof on the $(n-1)$-rectifiability of $\mathcal{R}^\pm$ is identical to the one of \cite[Proposition 3.1]{Velazquez1993-2}.
\end{proof}

Finally, we give the proof of the corollaries in Subsection \ref{subsection application}.

\begin{proof}[Proof of Corollary \ref{coro second Liouville}]
We prove this corollary by induction on the dimension, which is essentially Federer's dimenion reduction principle.  By Proposition \ref{prop blow-down limit}, take a $w_1\in\mathcal{S}_M$ (with the associated $u_1$) to be a blow-down limit of $u$.

If $p$ is subcritical in dimension $n$, then by Giga-Kohn's Liouville theorem, $u_1$ is an ODE solution. By Merle-Zaag's theorem \cite{Merle-Zaag}, $u$ itself is also an ODE solution. Because $u$ blows up at $t=0$, we must have
\[ u(x,t)\equiv\pm\kappa(-t)^{-\frac{1}{p-1}}.\]

Next, assume that the conclusion holds in dimension $n-1$. We want to show that it also holds in dimension $n$.
By the assumption that $u$ blows up on the whole $\R^n\times\{0\}$, and because singular points converges to singular points when we take the blow-down limits, we deduce that $u_1$ also blows up on the whole $\R^n\times\{0\}$. Because $u_1$ is backward self-similar,  for any $x_1\neq 0$, if we blow-up $u_1$ at $(x_1,0)$,  we get a backward self-similar solution $u_2$ which is translational invariant along the $x_1$ direction. In other words, $u_2$ can be viewed as a solution in dimension $n-1$. Then by the inductive hypothesis, $u_2(x,t)\equiv \pm\kappa(-t)^{-\frac{1}{p-1}}$. By Theorem \ref{maintheorem2}, any nonzero $x$ belongs to $\mathcal{R}^\pm$. But Theorem \ref{maintheorem3} says, unless $u_1(x,t)\equiv \pm\kappa(-t)^{-\frac{1}{p-1}}$, its blow-up set would be of dimension at most $n-1$. Hence it is only possible that $u_1$ is an ODE solution. Then
an application of Theorem \ref{maintheorem1} implies that $u$ is also an ODE solution.
\end{proof}

\begin{proof}[Proof of Corollary \ref{coro nearly subcritical case}]
Let $u(x, t)=(-t)^{-\frac{1}{p-1}}w(\frac{x}{\sqrt{-t}})$, which
 is an ancient solution of \eqref{eqn} in $\mathcal{F}_M$. Take a point   $x$ on the unit sphere $\mathbb{S}^{n-1}$.

If $u$ blows up at the point $(x, 0)$. By the Federer dimension reduction principle, we can blow-up it at $(x, 0)$
to get a function  $\widetilde{w}\in\mathcal{S}_{M}$. Then $\widetilde{w}$ is translation invariant  in one direction. In particular, $\widetilde{w}$ is a solution of \eqref{SC1} in $\mathbb{R}^{n-1}$.
Because $1<p<\frac{n+1}{n-3}$, an application of Giga-Kohn's Liouville theorem shows that $\widetilde{w}\equiv\pm\kappa$. By Theorem \ref{maintheorem2}, there exist two positive constant $C, \delta_{x}$ such that
\begin{equation}\label{11.11}
|u|\leq C(-t)^{-\frac{1}{p-1}},\quad\text{in}~B_{\delta_{x}}(x)\times (-\delta^{2}_{x}, 0).
\end{equation}

 If $u$ is regular at $(x, 0)$,  there also exists a constant $\delta_{x}$ such that $u$ is bounded
in $B_{\delta_{x}}(x)\times (-\delta^{2}_{x}, 0)$.  Thus \eqref{11.11} trivially holds for  $(x,0)$.

Since $\mathbb{S}^{n-1}$ is compact, by a covering, we  find two positive constants $C, \bar{\delta}$ such that
\[|u|\leq C(-t)^{-\frac{1}{p-1}},\quad\text{in}~(B_{1+\bar{\delta}}(0)\backslash B_{1-\bar{\delta}}(0))\times (-\bar{\delta}, 0).\]
Coming back to $w$, this says $w$ is bounded at infinity. Since $w$ is smooth, it is bounded in the entire $\mathbb{R}^{n}$.
\end{proof}

\begin{proof}[Proof of Corollary \ref{coro radially symmetric cae}]
Let $u(x, t)=(-t)^{-\frac{1}{p-1}}w(\frac{x}{\sqrt{-t}})$, which is
 an ancient solution of \eqref{eqn} in $\mathcal{F}_M$. Assume $w$ is not the constant solution and $u$ blows up at some point $x\in\mathbb{R}^{n}\backslash\{0\}$. Because $u(x,0)$ is $(-\frac{2}{p-1})$-homogeneous and radially symmetric, the singular set of $u$ contains $\mathbb{R}^{n}\backslash\{0\}$. Moreover, for each $x\neq 0,\mathcal{T}(x, 0, u)=\{\kappa\}$ or $\{-\kappa\}$. This is a contradiction with Theorem \ref{maintheorem3}. Therefore, there exist two constant $C, \bar{\delta}$ such that $u$ is bounded in $(B_{1+\bar{\delta}}(0)\backslash B_{1-\bar{\delta}}(0))\times (-\bar{\delta}, 0)$. Coming back to $w$, this is exactly \eqref{decay estimate for radially symmetric soluitons}.
\end{proof}

\appendix
\section{Proof of Proposition  \ref{pro2}}\label{sec proof of Proposition 4.4}
	\setcounter{equation}{0}

%We now provide the first main ingredient for the rigidity. 
The proof of Proposition \ref{pro2} needs several lemmas.
\begin{lem}\label{lempre}
For any $R_{0}>1$ and $\varepsilon>0$, there exists $R_{1}\geq R_{0}$  such that if $R>R_{1}$, $w\in\mathcal{S}_M$ is smooth in $\overline{B_R(0)}$ and it satisfies
\[\text{dist}_{R}(w,\kappa)+\text{dist}_{R}\left(\Lambda(w), \frac{2}{p-1}\kappa\right)\leq \frac{\kappa}{p+1},\]
then
\begin{equation}\label{lempre1}
\text{dist}_{R_{0}}(w, \kappa)+\text{dist}_{R_{0}}\left(\Lambda(w), \frac{2}{p-1}\kappa\right)\leq \varepsilon.
\end{equation}
\end{lem}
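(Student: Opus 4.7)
The plan is to argue by contradiction using the compactness of $\mathcal{S}_M$ and then to close the argument with Proposition \ref{ChCS}. Suppose the conclusion fails for some $R_0>1$ and $\varepsilon>0$. Then there are sequences $R_k\to\infty$ and $w_k\in\mathcal{S}_M$ such that each $w_k$ is smooth in $\overline{B_{R_k}(0)}$ with
\[\text{dist}_{R_k}(w_k,\kappa)+\text{dist}_{R_k}\!\left(\Lambda(w_k),\tfrac{2}{p-1}\kappa\right)\leq \tfrac{\kappa}{p+1},\]
while the conclusion is violated on $B_{R_0}(0)$.

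Next I would upgrade the pointwise bound into smooth compactness. The hypothesis gives the uniform $L^\infty$ bound $|w_k-\kappa|\leq\kappa/(p+1)$ on $B_{R_k}(0)$, which for fixed $K\Subset\R^n$ holds on $K$ for all large $k$. Standard elliptic Schauder theory applied to the equation \eqref{SC1} then yields uniform $C^{\ell}_{loc}(\R^n)$ bounds for every $\ell$. A diagonal subsequence extraction produces a limit $w_\infty\in C^\infty(\R^n)$ with $w_k\to w_\infty$ in $C^\infty_{loc}(\R^n)$, and $w_\infty$ is a classical solution of \eqref{SC1} on all of $\R^n$. Passing to the limit in the hypothesis, $w_\infty$ is a \emph{bounded} solution satisfying
\[|w_\infty-\kappa|\leq \tfrac{\kappa}{p+1},\qquad \left|\Lambda(w_\infty)-\tfrac{2}{p-1}\kappa\right|\leq \tfrac{\kappa}{p+1}\quad\text{on }\R^n.\]

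The second inequality gives
\[\Lambda(w_\infty)\geq \tfrac{2\kappa}{p-1}-\tfrac{\kappa}{p+1}=\tfrac{(p+3)\kappa}{(p-1)(p+1)}>0,\]
so $\Lambda(w_\infty)$ does not change sign. Proposition \ref{ChCS} then forces $w_\infty$ to be a constant solution of \eqref{SC1}, and since $|w_\infty-\kappa|<\kappa$ rules out $0$ and $-\kappa$, we must have $w_\infty\equiv\kappa$. The $C^\infty_{loc}$ convergence on $\overline{B_{R_0}(0)}$ then forces both $\text{dist}_{R_0}(w_k,\kappa)$ and $\text{dist}_{R_0}(\Lambda(w_k),\tfrac{2}{p-1}\kappa)$ to vanish as $k\to\infty$, contradicting the negation.

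The main point to justify carefully is the compactness step: given only that $w_k\in\mathcal{S}_M$ (so a priori just a suitable weak solution with the Morrey bound \eqref{Morreyeatimates}) and the pointwise bound in the hypothesis, one must produce a smooth limit. Because the hypothesis gives a uniform $L^\infty$ bound on compacta, the equation \eqref{SC1} becomes a uniformly elliptic equation with smooth coefficients and bounded right-hand side on each $B_R(0)$, and interior Schauder estimates yield the required $C^{\ell}_{loc}$ bounds; no appeal to Proposition \ref{prop compactness} is needed beyond this. The rest of the argument is just a direct application of Proposition \ref{ChCS}.
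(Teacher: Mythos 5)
Your proof is correct and follows essentially the same route as the paper: contradiction, elliptic regularity to upgrade the uniform $L^\infty$ bound to $C^\infty_{loc}$ convergence to a bounded entire solution $w_\infty$, then Proposition \ref{ChCS} (via positivity of $\Lambda(w_\infty)$) to force $w_\infty\equiv\kappa$, contradicting the negated conclusion. Your explicit lower bound on $\Lambda(w_\infty)$ and the remark that the compactness needs only interior Schauder estimates (rather than Proposition \ref{prop compactness}) make the argument slightly more self-contained, but the structure and key lemma are identical to the paper's.
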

\begin{proof}
Assume by the contrary that there exist $R_0$ and $\varepsilon$,  and two sequences $w_i\in\mathcal{S}_M$, $R_i\to+\infty$ such that $w_i$ is smooth in $\overline{B_{R_i}(0)}$,
\begin{equation}\label{lemprea}
\begin{aligned}
\text{dist}_{R_i}(w_i,\kappa)+\text{dist}_{R_i}\left(\Lambda(w_i), \frac{2}{p-1}\kappa\right)\leq \frac{\kappa}{p+1},
\end{aligned}
\end{equation}
but
\begin{equation}\label{lempre2}
\text{dist}_{R_{0}}(w_i, \kappa)+\text{dist}_{R_{0}}\left(\Lambda(w_i), \frac{2}{p-1}\kappa\right)>\varepsilon.
\end{equation}

By \eqref{lemprea} and   standard elliptic regularity theory,  there exists a function $w_{\infty}$ such that $w_i\to w_{\infty}$  in $C^{\infty}_{loc}(\mathbb{R}^{n})$. In particular, $w_{\infty}$ is a  smooth solution of \eqref{SC1}. Taking limit  in \eqref{lemprea} leads to
\begin{equation}\label{functiondistance}
|w_{\infty}-\kappa|+
\left|\Lambda(w_{\infty})-\frac{2}{p-1}\kappa\right|\leq \frac{\kappa}{p+1}\quad\text{in}~\mathbb{R}^{n}.
\end{equation}
Then by Proposition \ref{ChCS},   $w_{\infty}\equiv \kappa$.

On the other hand,  by the $C^\infty_{loc}(\R^n)$ convergence of $w_i$, taking limit   in \eqref{lempre2} gives
\begin{equation}\label{distancegap}
\text{dist}_{R_{0}}(w_{\infty}, \kappa)+\text{dist}_{R_{0}}\left(\Lambda(w_{\infty}), \frac{2}{p-1}\kappa\right)\geq \varepsilon.
\end{equation}
This is a contradiction.
\end{proof}
The above lemma implies that on a fixed compact subset, we can make $w$ as close to $\kappa$ as possible. In order to finish the proof of Proposition \ref{pro2}, it suffices to show that both $\nabla w$ and $\nabla \Lambda(w)$ are exponentially small in $R$. For this purpose, we define
\[\tau:=\frac{w^p}{\Lambda(w)}.\] 
This function is well defined because both $w$ and $\Lambda(w)$ are continuous, positive functions on $B_{R}(0)$.

The auxiliary function $\tau$ satisfies the following equations.
\begin{lem}\label{formulas}
The function $\tau$ satisfies
\begin{align}\label{formula1}
\frac{1}{\Lambda(w)^{2}\rho}{\rm\text{div}}\left(\Lambda(w)^{2}\rho\nabla \tau\right)=\frac{p(p-1)w^{p-2}|\nabla w|^{2}}{\Lambda(w)}
\end{align}
and
\begin{align}\label{formula2}
\frac{1}{\Lambda(w)^{2}\rho}{\rm\text{div}}\left(\Lambda(w)^{2}\rho\nabla \tau^{2}\right)=2|\nabla \tau|^{2}+\frac{2 p(p-1)\tau w^{p-2}|\nabla w|^{2}}{\Lambda(w)}.
\end{align}
\end{lem}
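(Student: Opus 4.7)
The plan is to reduce both identities to algebraic consequences of the self-similar equation \eqref{SC1}; the argument is entirely computational, so the main task is to organize the bookkeeping. First, using $\nabla\rho = -\frac{y}{2}\rho$, the weighted divergence admits the reformulation
\[
\frac{1}{\Lambda(w)^{2}\rho}\,\mathrm{div}\bigl(\Lambda(w)^{2}\rho\,\nabla\varphi\bigr) = \mathcal{L}_{G}\varphi + \frac{2\,\nabla\Lambda(w)\cdot\nabla\varphi}{\Lambda(w)},
\]
where $\mathcal{L}_{G}f := \Delta f - \tfrac{y}{2}\cdot\nabla f$ denotes the drift Laplacian. In particular, \eqref{formula1} is equivalent to the pointwise identity $\Lambda(w)\,\mathcal{L}_{G}\tau + 2\,\nabla\Lambda(w)\cdot\nabla\tau = p(p-1)w^{p-2}|\nabla w|^{2}$.

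The crucial auxiliary step will be the commutator-type identity
\[
\mathcal{L}_{G}\Lambda(w) = \left(\frac{p}{p-1} - p\,w^{p-1}\right)\Lambda(w),
\]
which I would derive by splitting $\Lambda(w) = \frac{2}{p-1}w + y\cdot\nabla w$. A short direct computation yields the general relation $\mathcal{L}_{G}(y\cdot\nabla w) = 2\Delta w + y\cdot\nabla(\mathcal{L}_{G}w)$, which reflects the commutator of $\mathcal{L}_{G}$ with the Euler field. Substituting $\mathcal{L}_{G}w = \frac{1}{p-1}w - w^{p}$ (equivalent to \eqref{SC1}) and rewriting $y\cdot\nabla w = \Lambda(w) - \frac{2}{p-1}w$, all lower-order terms regroup into the claimed form. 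This identity is essentially the linearization structure of \eqref{SC1} along the scaling direction, and verifying it is the one nontrivial piece of bookkeeping in the whole argument.

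Given the commutator identity, apply $\mathcal{L}_{G}$ to the factorization $\tau\cdot\Lambda(w) = w^{p}$ via the product rule $\mathcal{L}_{G}(fg) = f\mathcal{L}_{G}g + g\mathcal{L}_{G}f + 2\nabla f\cdot\nabla g$. On the right, $\mathcal{L}_{G}(w^{p}) = p(p-1)w^{p-2}|\nabla w|^{2} + pw^{p-1}\mathcal{L}_{G}w$, which one evaluates using \eqref{SC1}. On the left, the commutator formula together with $\tau\,\Lambda(w) = w^{p}$ lets one rewrite $\tau\,\mathcal{L}_{G}\Lambda(w)$ so that all $w^{p}$ and $w^{2p-1}$ contributions cancel exactly with the corresponding terms on the right, leaving precisely \eqref{formula1}. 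Identity \eqref{formula2} then follows formally: from $\nabla\tau^{2} = 2\tau\nabla\tau$ and $\mathcal{L}_{G}\tau^{2} = 2\tau\,\mathcal{L}_{G}\tau + 2|\nabla\tau|^{2}$ one obtains
\[
\frac{1}{\Lambda(w)^{2}\rho}\,\mathrm{div}\bigl(\Lambda(w)^{2}\rho\,\nabla\tau^{2}\bigr) = 2\tau\!\left[\mathcal{L}_{G}\tau + \frac{2\,\nabla\Lambda(w)\cdot\nabla\tau}{\Lambda(w)}\right] + 2|\nabla\tau|^{2},
\]
into which \eqref{formula1} is substituted. The whole computation is algebraic; once the commutator identity for $\mathcal{L}_{G}\Lambda(w)$ is in place, everything else reduces to routine rearrangement.
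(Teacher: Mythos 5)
Your proposal is correct and follows essentially the same route as the paper's proof. The commutator identity you isolate, $\mathcal{L}_{G}\Lambda(w)=\bigl(\tfrac{p}{p-1}-pw^{p-1}\bigr)\Lambda(w)$, is exactly the paper's equation for $\Lambda(w)$ (its display (A.3)), and applying the drift-Laplacian product rule to $\tau\,\Lambda(w)=w^{p}$ is just the multiplied-through form of the quotient-rule computation the paper carries out directly on $\tau=w^{p}/\Lambda(w)$; both then derive \eqref{formula2} from \eqref{formula1} by the same elementary identity for $\mathcal{L}_G\tau^2$.
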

\begin{proof}
Since $w$ is a solution of \eqref{SC1}, $w^p$ satisfies the equation
\begin{align*}\label{pro2.1}
\Delta w^p-\frac{y}{2}\cdot\nabla w^p+\frac{1}{p-1}w^p+pw^{p-1}w^p-w^p=p(p-1)w^{p-2}|\nabla w|^{2}.
\end{align*}
Moreover, we get from \eqref{SC1} that $\Lambda(w)$ satisfies
\begin{equation}\label{pro2.2}
\Delta\Lambda(w)-\frac{y}{2}\cdot\nabla \Lambda(w)-\frac{1}{p-1}\Lambda(w)+pw^{p-1}\Lambda(w)-\Lambda(w)=0.
\end{equation}

By a direct computation, we obtain
\begin{equation*}\label{pro2.3}
\begin{aligned}
&\frac{1}{\Lambda(w)^{2}\rho}{\rm\text{div}}\left(\Lambda(w)^{2}\rho\nabla \tau\right)\\
=&\Delta\tau+\frac{2}{\Lambda(w)}\nabla \Lambda(w)\cdot\nabla\tau-\frac{y}{2}\cdot\nabla\tau\\
=&\frac{\Lambda(w)\Delta w^p-w^p\Delta \Lambda(w)}{\Lambda(w)^{2}}-\frac{y}{2}\cdot\nabla\tau\\
=& \Lambda(w)^{-1}\left[\frac{y}{2}\cdot \nabla w^p+\frac{p}{p-1}w^p-pw^{p-1}w^p+p(p-1)w^{p-2}|\nabla w|^{2}\right]\\
&-\frac{w^p}{\Lambda(w)^{2}}\left[\frac{y}{2}\cdot\nabla\Lambda(w)+\frac{1}{p-1}\Lambda(w)-pw^{p-1}\Lambda(w)+\Lambda(w)\right]-\frac{y}{2}\cdot\nabla\tau\\
=&\frac{p(p-1)w^{p-2}|\nabla w|^{2}}{\Lambda(w)},
\end{aligned}
\end{equation*}
which is \eqref{formula1}.

Since
\[\frac{1}{\Lambda(w)^{2}\rho}{\rm\text{div}}\left(\Lambda(w)^{2}\rho\nabla \tau^{2}\right)=\frac{2\tau}{\Lambda(w)^{2}\rho}{\rm\text{div}}(\Lambda(w)^{2}\rho\nabla \tau)+2|\nabla\tau|^{2},\]
  \eqref{formula2} follows from \eqref{formula1}.
\end{proof}
By   Lemma \ref{formulas}, we can obtain  the following integral estimate on $|\nabla w|$ and $|\nabla\Lambda(w)|$.
\begin{lem}\label{lem3.1}
For any $s\in (0, R)$, we have
\begin{equation}\label{lem3.1.1}
\int_{B_{R-s}(0)}\left[|\nabla \tau|^{2}+\frac{\tau w^{p-2}|\nabla w|^{2}}{\Lambda(w)}\right]\Lambda(w)^{2}\rho dy\leq\frac{C}{s^{2}}R^{n}e^{-\frac{(R-s)^{2}}{4}}\sup_{y\in B_{R}(0)}w(y)^{2p},
\end{equation}
where $C$ is a positive constant independent of $R$.
\end{lem}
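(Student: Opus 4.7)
The approach is a weighted Caccioppoli argument built directly on the divergence-form identity \eqref{formula2} furnished by Lemma \ref{formulas}. In the setting where this lemma is applied (inside the iteration for Proposition \ref{pro2}), both $w$ and $\Lambda(w)$ are close to the positive constants $\kappa$ and $\tfrac{2}{p-1}\kappa$ on $B_R(0)$, so $\tau=w^p/\Lambda(w)$ is smooth and positive there and the weight $\Lambda(w)^2\rho$ is bounded between two positive multiples of $\rho$. We multiply \eqref{formula2} by a cutoff, integrate by parts against the weight, and absorb the resulting cross term.

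Concretely, fix $\eta\in C_c^\infty(B_R(0))$ with $\eta\equiv 1$ on $B_{R-s}(0)$ and $|\nabla\eta|\leq C/s$. Multiply \eqref{formula2} by $\eta^2\Lambda(w)^2\rho$ and integrate. Since the left-hand side of \eqref{formula2} has the divergence form $\Lambda(w)^{-2}\rho^{-1}\,\mathrm{div}(\Lambda(w)^2\rho\nabla\tau^2)$, integration by parts gives
\[
-\int \Lambda(w)^2\rho\,\nabla\tau^2\cdot\nabla\eta^2\,dy=\int\eta^2\Bigl[2|\nabla\tau|^2+\frac{2p(p-1)\tau w^{p-2}|\nabla w|^2}{\Lambda(w)}\Bigr]\Lambda(w)^2\rho\,dy.
\]
Using $\nabla\tau^2=2\tau\nabla\tau$ and the weighted Cauchy--Schwarz inequality, the left-hand side is bounded in absolute value by
\[
\int\eta^2|\nabla\tau|^2\Lambda(w)^2\rho\,dy+4\int\tau^2|\nabla\eta|^2\Lambda(w)^2\rho\,dy.
\]
Absorbing the first term into the right-hand side of the previous identity yields
\[
\int\eta^2\Bigl[|\nabla\tau|^2+\frac{2p(p-1)\tau w^{p-2}|\nabla w|^2}{\Lambda(w)}\Bigr]\Lambda(w)^2\rho\,dy\leq 4\int\tau^2|\nabla\eta|^2\Lambda(w)^2\rho\,dy.
\]

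To finish, I use the crucial algebraic identity $\tau\Lambda(w)=w^p$, so that $\tau^2\Lambda(w)^2=w^{2p}$. Hence the right-hand side is controlled by
\[
\frac{C}{s^2}\sup_{B_R(0)}w^{2p}\int_{B_R(0)\setminus B_{R-s}(0)}\rho\,dy\leq\frac{C}{s^2}R^n e^{-(R-s)^2/4}\sup_{B_R(0)}w^{2p},
\]
where I have used that $\rho(y)\leq (4\pi)^{-n/2}e^{-(R-s)^2/4}$ on the annulus $|y|\geq R-s$ and that the volume of the annulus is bounded by $CR^n$. Restricting the integration on the left to $B_{R-s}(0)$, where $\eta\equiv 1$, gives exactly \eqref{lem3.1.1} after possibly enlarging the universal constant $C$. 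The only non-routine step is the absorption of the cross term generated by integration by parts, and it works cleanly because the divergence structure in \eqref{formula2} already supplies the positive coefficient $2$ in front of $|\nabla\tau|^2$; everything else amounts to extracting Gaussian decay of $\rho$ away from the origin.
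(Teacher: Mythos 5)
Your proof is correct and takes essentially the same route as the paper: a weighted Caccioppoli argument built on the divergence identity \eqref{formula2}, with a cutoff at scale $s$, a Cauchy--Schwarz absorption of the cross term, the key identity $\tau\Lambda(w)=w^p$ to simplify the boundary contribution, and the Gaussian decay of $\rho$ on the annulus $B_R\setminus B_{R-s}$. The only cosmetic difference is that the paper phrases the same integration by parts as $0=\int\mathrm{div}(\Lambda(w)^2\rho\phi^2\nabla\tau^2)\,dy$ before expanding, whereas you multiply by the cutoff and integrate by parts directly; these are identical manipulations.
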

\begin{proof}
For any $\phi\in C_0^\infty(B_{R}(0))$, using the divergence theorem, we get
\begin{equation*}\label{lem3.1.2}
\begin{aligned}
0=&\int_{B_{R}(0)}{\rm \text{div}}\left(\Lambda(w)^{2}\rho\phi^{2}\nabla\tau^{2}\right)dy\\
=&2\int_{B_{R}(0)}\phi^{2}\left[|\nabla \tau|^{2}+\frac{p(p-1)\tau w^{p-2}|\nabla w|^{2}}{\Lambda(w)}\right]\Lambda(w)^{2}\rho dy\\
+&4\int_{B_{R}(0)}\phi\tau\nabla\phi\cdot\nabla\tau\Lambda(w)^{2}\rho dy.
\end{aligned}
\end{equation*}
Applying  Cauchy-Schwarz inequality $4ab\leq a^{2}+4b^{2}$, we obtain
\begin{equation*}\label{lem3.1.2'}
 4\int_{B_{R}(0)}\phi\tau\nabla\phi\cdot\nabla\tau\Lambda(w)^{2}\rho dy\leq \int_{B_{R}(0)}|\nabla \tau|^{2}\phi^{2}\Lambda(w)^{2}\rho dy+4\int_{B_{R}(0)}\tau^{2}|\nabla\phi|^{2}\Lambda(w)^{2}\rho dy.
\end{equation*}
Hence there exists a universal positive constant $C$ such that
\begin{equation}\label{lem3.1.3}
\int_{B_{R}(0)}\phi^{2}\left[|\nabla \tau|^{2}+\frac{\tau w^{p-2}|\nabla w|^{2}}{\Lambda(w)}\right]\Lambda(w)^{2}\rho dy\leq C\int_{B_{R}(0)}\tau^{2}|\nabla\phi|^{2}\Lambda(w)^{2}\rho dy.
\end{equation}
Choose  $\phi$ satisfying
$\phi\equiv 1$ in $B_{R-s}(0)$ and 
$|\nabla\phi|\leq 2/s$ in $B_{R}(0)\backslash B_{R-s}(0)$.
 Then
 \eqref{lem3.1.1} follows  from \eqref{lem3.1.3}, if we note that by definition, $\tau\Lambda(w)=w^p$.
\end{proof}
Combing the integral estimates of Lemma \ref{lem3.1}  with  standard elliptic estimates, %(see \cite{Gi-Tr}), 
we can establish the following pointwise  bound on $|\nabla w|$ and $|\nabla\Lambda(w)|$.
\begin{lem}\label{lem 4.1}
There exists a universal constant $C$   such that
\begin{equation}\label{cor4.1.1}
\sup_{y\in B_{R-3}(0)}(|\nabla\Lambda(w)|^{2}+|\nabla w|^{2})\leq CR^{2n}e^{-\frac{R}{4}}.
\end{equation}
\end{lem}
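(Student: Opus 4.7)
The plan is to combine the weighted $L^2$ estimate of Lemma \ref{lem3.1} with standard interior elliptic regularity, exploiting a Gaussian gap: on a unit ball centered at any $y_0\in B_{R-3}(0)$ one has $\rho\geq c\,e^{-(R-2)^2/4}$, while the right-hand side of Lemma \ref{lem3.1} (with $s=1$) is essentially of size $e^{-(R-1)^2/4}$, and the ratio provides the required exponential decay.

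To make this precise, first observe that under the hypotheses (together with Lemma \ref{lempre} applied with a suitably small $\varepsilon$), the functions $w$, $\Lambda(w)$, and $\tau=w^p/\Lambda(w)$ are uniformly bounded above and below by positive universal constants on $B_R(0)$. Taking $s=1$ in Lemma \ref{lem3.1} and absorbing the weights $\Lambda(w)^2$, $\tau$, $w^{p-2}$ into a universal constant yields
\begin{equation*}
  \int_{B_{R-1}(0)}\bigl(|\nabla\tau|^2+|\nabla w|^2\bigr)\rho\,dy\leq CR^n e^{-(R-1)^2/4}.
\end{equation*}
For any $y_0$ with $|y_0|\leq R-3$ one has $B_1(y_0)\subset B_{R-2}(0)\subset B_{R-1}(0)$, and on this ball $\rho(y)\geq(4\pi)^{-n/2}e^{-(R-2)^2/4}$. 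Dividing through by this pointwise lower bound converts the weighted estimate into an unweighted one:
\begin{equation*}
  \int_{B_1(y_0)}\bigl(|\nabla\tau|^2+|\nabla w|^2\bigr)\,dy\leq CR^n e^{[(R-2)^2-(R-1)^2]/4}=CR^n e^{-(2R-3)/4}.
\end{equation*}

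Next I will pass from this integral bound to a pointwise bound by interior elliptic regularity. Differentiating \eqref{SC1} shows that each component of $\nabla w$ solves a linear elliptic equation whose drift $y/2$ is bounded on $B_R(0)$ by $R/2$ and whose zeroth-order coefficient $\tfrac{1}{2}+\tfrac{1}{p-1}-pw^{p-1}$ is bounded by a universal constant. Standard $L^2$-to-$L^\infty$ estimates (Moser iteration) then produce $|\nabla w(y_0)|^2\leq CR^n\,\|\nabla w\|_{L^2(B_1(y_0))}^2$, where the $R^n$ prefactor reflects the polynomial dependence of the Moser constant on the $L^\infty$ drift bound $R/2$. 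Combining with the previous display,
\begin{equation*}
  |\nabla w(y_0)|^2\leq CR^{2n}e^{-(2R-3)/4}\leq CR^{2n}e^{-R/4}
\end{equation*}
for $R$ sufficiently large. The analogous bound for $|\nabla\Lambda(w)(y_0)|$ follows by differentiating the identity $\Lambda(w)=\tfrac{2}{p-1}w+y\cdot\nabla w$ to get $|\nabla\Lambda(w)|\leq C|\nabla w|+R|\nabla^2 w|$, and then applying a further interior $W^{2,2}$ estimate to control $\nabla^2 w$ pointwise by $\|\nabla w\|_{L^2}$ on a slightly larger ball, again losing only a polynomial factor in $R$.

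The main obstacle is checking that the Moser and interior regularity constants just invoked depend at most polynomially, not exponentially, on $R$, since the drift $y/2$ has $L^\infty$-norm up to $R/2$ on $B_R(0)$. This is standard for second-order uniformly elliptic operators with bounded-coefficient drift on a unit ball: the Moser constant grows as a fixed polynomial in $\|b\|_{L^\infty}$. The resulting $R^{2n}$ prefactor is then comfortably absorbed by the exponential gain $e^{-(2R-3)/4}$ once it is relaxed to $e^{-R/4}$.
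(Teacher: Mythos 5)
Your proposal is correct and delivers the stated bound, but the route differs from the paper's in two places worth spelling out. First, after the weighted $L^2$ estimate of Lemma \ref{lem3.1}, the paper divides by the minimum of $\rho$ over the \emph{whole} ball $B_{R-1}(0)$ (losing a factor $e^{(R-1)^2/4}$ and landing on $e^{-R/4}$), and then does elliptic regularity on balls $B_{1/R}(x)$; on such tiny balls the rescaled drift is $O(1)$, so the $L^2$--to--$L^\infty$ constant is genuinely universal and the $R^n$ comes purely from volume scaling. You instead divide by the minimum of $\rho$ only over the unit ball $B_1(y_0)$ (gaining the sharper exponent $e^{-(2R-3)/4}$) and then do Moser on the unit ball, relying on the fact that the Moser constant grows only polynomially in $\|b\|_{L^\infty}\sim R$. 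That claim is true --- tracking the Caccioppoli constant $\sim\|b\|_\infty^2$ through the iteration yields a constant of order $\|b\|_\infty^{n}$ --- but it is not in most textbooks in this explicit form and is exactly what the paper's $1/R$-ball trick is engineered to avoid. Your extra exponential margin $e^{-R/2}$ vs.\ $e^{-R/4}$ is what makes the polynomial losses harmless, and you correctly relax at the end. Second, for $\nabla\Lambda(w)$ the paper uses the equation \eqref{pro2.2} satisfied by $\Lambda(w)$ (or, equivalently, the algebraic identity $\nabla\tau=\frac{pw^{p-1}}{\Lambda(w)}\nabla w-\frac{w^p}{\Lambda(w)^2}\nabla\Lambda(w)$) so that $\nabla\Lambda(w)$ is controlled directly by $\nabla w$ and $\nabla\tau$ without differentiating again; your route via $|\nabla\Lambda(w)|\leq C|\nabla w|+R|\nabla^2 w|$ and a second-derivative Moser bound loses extra powers of $R$ (roughly $R^4$ total from the $R^2$ coefficient and the Caccioppoli step for the Hessian), which again your exponential margin comfortably absorbs but which makes the argument longer and more exposed to constant bookkeeping. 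In short: same skeleton (Lemma \ref{lem3.1} plus elliptic regularity), but your scale for the regularity step is fixed while the paper's is $1/R$, and your handling of $\nabla\Lambda(w)$ is via the Hessian rather than via the PDE for $\Lambda(w)$.
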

\begin{proof}
Lemma \ref{lem3.1} with $s=1/2$ gives
\begin{equation}\label{cor4.1.2}
\int_{B_{R-\frac{1}{2}}(0)}\left[|\nabla \tau|^{2}+|\nabla w|^{2}\right]\rho dy\leq CR^{n}\exp\left\{-\frac{(R-\frac{1}{2})^{2}}{4}\right\}.
\end{equation}
Plugging the relation
\[\nabla\tau=\frac{pw^{p-1}}{\Lambda(w)}\nabla w -\frac{w^p}{\Lambda(w)^{2}}\nabla\Lambda(w)\]
into \eqref{cor4.1.2}, we get
\begin{equation*}\label{cor4.1.2'}
\int_{B_{R-\frac{1}{2}}(0)}\left[|\nabla \Lambda(w)|^{2}+|\nabla w|^{2}\right]\rho dy\leq CR^{n}\exp\left\{-\frac{(R-\frac{1}{2})^{2}}{4}\right\}.
\end{equation*}
Since
\[e^{-\frac{|x|^{2}}{4}}\geq \exp\left\{-\frac{R^{2}-2R+1}{4}\right\},\quad\text{on}~B_{R-1}(0),\]
it follows that
\begin{equation}\label{cor4.1.3}
\int_{B_{R-1}(0)}\left[|\nabla \Lambda(w)|^{2}+|\nabla w|^{2}\right]dy\leq CR^{n}\exp\left\{-\frac{R}{4}\right\}.
\end{equation}
This is the integral decay on $\nabla\Lambda(w)$ and $\nabla w$.

Then we   combine \eqref{cor4.1.3} with elliptic regularity theory to get the pointwise bound. Let $\mathcal{L}$ be the linear operator defined by
\[\mathcal{L}\psi=\Delta\psi-\frac{y}{2}\cdot\nabla\psi-\frac{1}{p-1}\psi+pw^{p-1}\psi.\]
Since $w$ satisfies \eqref{SC1}, for each $i=1$, $\dots$, $n$,
\begin{equation*}\label{formula3}
\mathcal{L}\partial_{x_i}w-\frac{1}{2}\partial_{x_i}w=0,\quad\text{in}~B_{R}(0).
\end{equation*}
Moreover, we get from \eqref{pro2.2} that for $i=1, 2, \cdots n$,
\begin{equation*}\label{formula4}
\mathcal{L}\Lambda(w)_{i}-\Lambda(w)_{i}-\frac{1}{2}\Lambda(w)_{i}+p(p-1)w^{p-2}w_{i}\Lambda(w)=0,\quad\text{in}~B_{R}(0).
\end{equation*}
 Notice that the first order term in $\mathcal{L}$ grows at most linearly. By applying standard elliptic regularity theory  (see \cite[Theorem 9.20]{Gi-Tr}) on  $B_{1/R}(x)$  (for any $x\in B_{R-2}(0)$), we get
\begin{equation}\label{cor4.1.4}
|\nabla w(x)|^{2}\leq CR^{n}\int_{B_{1/R}(x)}|\nabla w|^{2}
\end{equation}
and
\begin{equation}\label{cor4.1.5}
|\nabla\Lambda(w)(x)|^{2}\leq CR^{n}\int_{B_{1/R}(y_{0})}|\nabla \Lambda(w)|^{2}+CR^{2n-2}e^{-\frac{R}{4}}.
\end{equation}
Combining \eqref{cor4.1.4}, \eqref{cor4.1.5} with the integral bound \eqref{cor4.1.3} gives \eqref{cor4.1.1}.
\end{proof}

Now  we are   ready to  prove  Proposition \ref{pro2}.
\begin{proof}[Proof of Proposition \ref{pro2}]
Take $R_{0}=5\sqrt{n}$ and $\varepsilon=\frac{\kappa}{100(p+1)}$ in Lemma \ref{lempre}. Then
\begin{equation}\label{prpro2.1}
\text{dist}_{5\sqrt{n}}(w, \kappa)+\text{dist}_{5\sqrt{n}}\left(\Lambda(w), \frac{2}{p-1}\kappa\right)\leq \frac{\kappa}{100(p+1)}.
\end{equation}
For any $x\in B_{R-3}(0)$ with $|y_{0}|>5\sqrt{n}$, by Lemma  \ref{lem 4.1},
\begin{equation}\label{prpro2.2}
\left|w(x)-w\left(\frac{y_{0}}{|y_{0}|}\right)\right|\leq \left| \int_{1}^{|y_{0}|}\frac{y_{0}}{|y_{0}|}\cdot\nabla w\left(\xi\frac{y_{0}}{|y_{0}|}\right)d\xi\right|\leq CR^{n+1}e^{-R/8}.
\end{equation}
 Combining \eqref{prpro2.1} and \eqref{prpro2.2}, we get
\[|w-\kappa|\leq \frac{\kappa}{100(p+1)}+ CR^{n+1}e^{-\frac{R}{8}}\leq \frac{\kappa}{20(p+1)} \quad\text{in}~B_{R-3}(0), \]
provided that $R$ is large enough. 

In the same way,  if $R$ is large enough, then
\[\text{dist}_{R-3}\left(\Lambda(w), \frac{2}{p-1}\kappa\right)\leq \frac{\kappa}{20(p+1)}. \qedhere\]
\end{proof}

\section{Proof of Proposition \ref{pro1}} \label{sec proof of proposition 4.5}
	\setcounter{equation}{0}

In this appendix, we prove Proposition \ref{pro1}. The main observation here is that by shrinking $B_{R}(0)$ slightly, we can choose a universal constant $s_{\ast}$ so that the $\varepsilon$-regularity result, Proposition \ref{epsilonregularity} is applicable. Now we give the details.

Throughout this appendix,   $w$ denotes a solution in $\mathcal{S}_M$ satisfying the assumptions in Proposition \ref{pro1}. By definition, $u(x, t)=(-t)^{-1/(p-1)}w(x/\sqrt{-t})\in\mathcal{F}_M$.

The following result, which is essentially \cite[Proposition 1']{Giga-Kohn1985}, gives  a gradient estimate  for solutions of \eqref{SC1}.  
\begin{lem}\label{lem gradient estimate for self-similar solutions}
There exists a universal  constant $C$  such that 
\[|\nabla w|+|\nabla^{2}w|+|\nabla^{3}w|\leq C,\quad\text{in}~B_{R-1}(0).\]
\end{lem}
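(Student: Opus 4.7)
The plan is to apply Bernstein's method to the elliptic equation \eqref{SC1}, exploiting the fact that the hypotheses of Proposition~\ref{pro1} already pin $w$ close to the constant $\kappa$. First, from $\mathrm{dist}_R(w,\kappa)\leq \kappa/(10(p+1))$ we obtain the universal $L^\infty$ bound $\|w\|_{L^\infty(B_R(0))}\leq 2\kappa$, so the nonlinearity $|w|^{p-1}w$ and the zeroth-order term in \eqref{SC1} are bounded by universal constants.

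For the gradient estimate, set $\phi:=|\nabla w|^2$ and $L:=\Delta-(y/2)\cdot\nabla$. Differentiating \eqref{SC1} and pairing with $\nabla w$, a direct computation yields
\[
L\phi \;=\; 2|\nabla^2 w|^2 + \left(1 + \tfrac{2}{p-1} - 2p|w|^{p-1}\right)\phi.
\]
The structural point is that although the drift $(y/2)\cdot\nabla w$ has unbounded coefficient, when differentiated it simply reproduces the operator $L$ applied to $\phi$ together with a universal zeroth-order contribution $\phi/2$ coming from $\nabla(y/2)=I/2$; no term involving $|y|$ survives on the right-hand side. Introduce a smooth cutoff $\eta$ with $\eta\equiv 1$ on $B_{R-1}(0)$, supported in $B_R(0)$, and with $|\nabla\eta|+|\Delta\eta|\leq C$. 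At the maximum point $y^\ast\in \overline{B_R(0)}$ of $H:=\eta^2\phi$, the relations $\nabla H(y^\ast)=0$ and $LH(y^\ast)\leq 0$ combine with the above identity and the universal bound on $|w|$ to produce a universal upper bound on $H(y^\ast)$, hence on $|\nabla w|$ throughout $B_{R-1}(0)$.

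For the higher derivatives, I would differentiate \eqref{SC1} once more: each $w_i$ solves the linear equation
\[
-\Delta w_i + \tfrac{y}{2}\cdot\nabla w_i + \left(\tfrac{1}{p-1}+\tfrac{1}{2}-p|w|^{p-1}\right)w_i \;=\; 0
\]
of the same drift-diffusion form as \eqref{SC1}, with zeroth-order coefficient bounded universally by the $L^\infty$ bound on $w$. The Bernstein argument from the previous paragraph, now applied to $\phi=|\nabla^2 w|^2=\sum_i |\nabla w_i|^2$ with a cutoff supported in $B_{R-1}(0)$ and equal to one on $B_{R-3/2}(0)$, yields a universal bound on $|\nabla^2 w|$ in $B_{R-3/2}(0)$. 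One more iteration, using now that $|w|,|\nabla w|,|\nabla^2 w|$ are all universally bounded, gives $|\nabla^3 w|\leq C$ in $B_{R-1}(0)$ after adjusting the cutoff radii.

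The main obstacle is the linear growth of the drift $y/2$, which could \emph{a priori} produce non-universal constants depending on $R$. The Bernstein computation above handles it cleanly because differentiating the drift reproduces the operator $L$ acting on $\phi$ together with bounded lower-order terms, so the maximum-principle estimate for $H$ is insensitive to the location of $y^\ast$ within $B_R(0)$. Equivalently, one may pass to the parabolic formulation via $u(x,t):=(-t)^{-1/(p-1)}w(x/\sqrt{-t})\in\mathcal{F}_M$, apply the Bernstein-type gradient estimate of Giga--Kohn for bounded solutions of \eqref{eqn} on a parabolic cylinder around $(y_0,-1)$ on which $u$ is uniformly bounded, and recover the derivative bounds on $w$ by restricting to the slice $t=-1$ and differentiating the self-similar ansatz; the cancellation of the unbounded drift appears there as a consequence of the scale invariance of the gradient estimate for \eqref{eqn}.
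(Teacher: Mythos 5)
Your final paragraph, where you pass to the parabolic formulation $u(x,t)=(-t)^{-1/(p-1)}w(x/\sqrt{-t})$ and read off the derivative bounds on the slice $t=-1$ from interior parabolic estimates on a cylinder where $u$ is uniformly bounded, is precisely the paper's argument. The paper's proof is exactly that one-liner, and it is clean because the parabolic equation has no drift, so there is no unbounded coefficient to worry about.

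Your primary route — Bernstein directly on the elliptic equation — is computationally right up to a point but does not close as written. The identity
\[
L\phi \;=\; 2|\nabla^2 w|^2 + \Bigl(1+\tfrac{2}{p-1}-2p|w|^{p-1}\Bigr)\phi
\]
is correct. But observe that for $w$ near $\kappa$ the zeroth-order coefficient is $a\approx 1+\tfrac{2}{p-1}-\tfrac{2p}{p-1}=-1<0$. With $H=\eta^2\phi$ only, at an interior maximum $y^\ast$ (where $\eta(y^\ast)>0$) the relation $\eta^2\nabla\phi=-\phi\nabla(\eta^2)$ turns the cross term into $-8\phi|\nabla\eta|^2$, so $0\geq LH$ becomes
\[
0\;\geq\; 2\eta^2|\nabla^2 w|^2 + \phi\bigl(a\eta^2 + L(\eta^2)-8|\nabla\eta|^2\bigr),
\]
and after discarding the nonnegative Hessian term the content is $0\geq -C\phi$, which is vacuous. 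You cannot read a bound on $\phi(y^\ast)$ off this; you never produce a coercive $+c\phi$ on the right. The standard fix is to take $H=\eta^2\phi+\lambda w^2$ for a large universal $\lambda$, using $Lw^2=2\phi+2wLw\geq 2\phi-C$ to supply the missing $+2\lambda\phi$, and then the estimate closes after Young's inequality on the extra cross term (at which point the universal $L^\infty$ bound on $w$ and the bound $|\nabla\eta|\leq C$ suffice). A second point you gloss over: the drift acting on the cutoff produces the term $-\tfrac{y}{2}\cdot\nabla(\eta^2)$, whose coefficient grows like $R$ on the annulus; it is only harmless because $\eta$ can be chosen radially decreasing, so $y\cdot\nabla\eta\leq 0$ and this term has a favorable sign. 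This is a genuine feature of the cutoff, not a consequence of the cancellation inside $L\phi$, and it deserves to be stated. So the elliptic Bernstein route can be made to work, but not in the form you wrote it; the parabolic passage you give at the end is both correct and the argument the paper actually uses.
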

\begin{proof}
For any $x\in B_{R-1}$, by the definition of $u$, we have
\[ |u(x+y,-1+t)|\leq C   \quad \text{for}~~ (y,t)\in Q_1^-.\]
The gradient estimate on $w$ then follows from standard parabolic estimates applied to $u$.
\end{proof}

\begin{lem}\label{lem small of monotonicity quantity}
 There exists  a universal, small constant $s_\ast$ such that for any $(x,t)\in B_{R-4}(0)\times[-1,-1+s_\ast]$, we have
\begin{equation}
\bar{E}(s_\ast; x, t, u)<\varepsilon_{0},
\end{equation}
where $\varepsilon_{0}$ is the positive constant in Proposition \ref{epsilonregularity}.
\end{lem}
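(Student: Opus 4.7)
The plan is to show that $E(s; x, t, u) \to 0$ uniformly on the compact region $B_{R-4}(0) \times [-1, -1 + s_\ast]$ as $s \to 0^+$, which then gives the same smallness for the time-average $\bar{E}(s_\ast; x, t, u)$. By the hypothesis of Proposition \ref{pro1}, $|w - \kappa| \leq \kappa/(10(p+1))$ on $\overline{B_R(0)}$, and Lemma \ref{lem gradient estimate for self-similar solutions} provides universal bounds on $|\nabla w|$ and $|\nabla^2 w|$ on $B_{R-1}(0)$. Translating through $u(y, \tau) = (-\tau)^{-1/(p-1)} w(y/\sqrt{-\tau})$, for $s_\ast$ smaller than a universal constant both $|u|$ and $|\nabla u|$ are universally bounded on $\{|y| \leq R - 2\} \times [-1 - 2s_\ast, -1]$, which is precisely the space-time region in which the integrands of $E(s; x, t, u)$ are evaluated for $(x, t)$ in the statement and $s \in [s_\ast, 2s_\ast]$.

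For such $(x, t)$ and $s$, I would split each of the three spatial integrals in the definition of $E$ according to whether $y \in B_{R-3}(0)$ or $y \in \R^n \setminus B_{R-3}(0)$. On the inner region, the universal pointwise bounds together with $\int_{\R^n} G(\cdot, s)\, dy = 1$ yield
\[
\int_{B_{R-3}(0)} \bigl[ |\nabla u(y, t - s)|^2 + |u(y, t - s)|^{p + 1} + u(y, t - s)^2 \bigr]\, G(y - x, s)\, dy \leq C,
\]
so the inner contribution to $E(s; x, t, u)$ is at most $C(s^{(p+1)/(p-1)} + s^{2/(p-1)}) \leq C s_\ast^{2/(p-1)}$, which is small for small $s_\ast$.

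On the outer region, the constraints $|x| \leq R - 4$ and $|y| \geq R - 3$ force $|y - x| \geq 1$, so
\[
G(y - x, s) \leq (4 \pi s)^{-n/2} e^{-1/(8 s)} e^{-|y - x|^2/(8 s)}.
\]
To exploit the Morrey estimate \eqref{Morreyeatimates}, which is a space-time bound, I would pass to $\bar{E}$ (an average over $s \in [s_\ast, 2s_\ast]$) and decompose the outer region into dyadic annuli $A_k := \{ 2^k \leq |y - x| < 2^{k+1} \}$ with $k \geq 0$. Each $A_k \times [-1 - 2 s_\ast, -1]$ is covered by $O(2^{kn})$ unit-scale backward parabolic cylinders of the form $Q_1^-(\cdot, 0)$, on each of which $\int (|\nabla u|^2 + |u|^{p+1}) \leq M$ by \eqref{Morreyeatimates}. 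Combining this with $e^{-|y - x|^2/(8 s)} \leq e^{-4^k/(8 s_\ast)}$ on $A_k$ and summing the rapidly convergent series in $k$ (dominated by the $k = 0$ term once $s_\ast$ is small) bounds the outer contribution to $\bar{E}(s_\ast; x, t, u)$ by $C M s_\ast^{-n/2} e^{-1/(8 s_\ast)}$, which decays super-polynomially as $s_\ast \to 0$ \emph{uniformly in $R$}.

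Combining the two contributions,
\[
\bar{E}(s_\ast; x, t, u) \leq C s_\ast^{2/(p-1)} + C M s_\ast^{-n/2} e^{-1/(8 s_\ast)},
\]
and choosing $s_\ast$ smaller than a universal threshold (depending only on $n$, $p$, $M$, and $\varepsilon_0$) makes the right-hand side strictly less than $\varepsilon_0$. The delicate step is the outer tail estimate: one must verify that the exponential Gaussian decay dominates the polynomial Morrey growth obtained from summing over dyadic annuli, and that this dominance is \emph{uniform} in the potentially very large radius $R$; this uniformity is what lets $s_\ast$ be chosen universally, and it is ensured by the fact that the minimal distance $|y - x| \geq 1$ on the outer region is itself independent of $R$.
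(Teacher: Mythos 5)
Your proposal is correct and follows essentially the same route as the paper's proof: bound the inner contribution using the universal pointwise bounds on $u$ coming from the smoothness of $w$ (Lemma~\ref{lem gradient estimate for self-similar solutions}) together with $\int G = 1$, then bound the outer tail by combining Gaussian decay with a covering of annuli by parabolic cylinders and the Morrey estimate \eqref{Morreyeatimates}. The only cosmetic differences are the choice of cutoff (you split at $B_{R-3}(0)$, the paper at $B_1(x)$) and the cylinder scale used to apply the Morrey bound (unit scale versus $\sqrt{s_\ast}$), neither of which affects the conclusion.
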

\begin{proof}
We only estimate   the integral
\[\frac{1}{s_\ast}\int_{t-2s_\ast}^{t-s_\ast}(t-\tau)^{\frac{p+1}{p-1}}\int_{\R^n}|u(y,\tau)|^{p+1} G(x-y,t-\tau)dy d\tau\]
for $(x,t)\in B_{R-4}(0)\times[-1,-1+s_\ast]$,
where $s_\ast$ is a small constant to be determined below.
The other two terms in $\bar{E}(s_\ast;x,t,u)$ can be estimated in the same way.

Decompose this integral into two parts, $\mathrm{I}$ being on $B_1(x)$ and $\mathrm{II}$ on $\R^n\setminus B_1(x)$.

By Lemma \ref{lem gradient estimate for self-similar solutions}, $|u(y,\tau)|\leq C$ in $B_1(x)\times[t-2s_\ast,t-s_\ast]$. Thus
\begin{align*}
  \mathrm{I} &\leq Cs_\ast^{-1} \int_{t-2s_\ast}^{t-s_\ast} (t-\tau)^{\frac{p+1}{p-1}}\int_{\R^n} G(x-y,t-\tau)dy d\tau\\
  &\leq Cs_\ast^{-1} \int_{t-2s_\ast}^{t-s_\ast} (t-\tau)^{\frac{p+1}{p-1}} d\tau\\
  & \leq Cs_\ast^{\frac{p+1}{p-1}}\\
  &\leq \frac{\varepsilon_0}{6},
\end{align*}
provided $s_\ast$ is small enough.

Concerning $\mathrm{II}$, we take the decomposition
\begin{align}\label{estiamte on II, sec 4}
\mathrm{II}&= \sum_{i=1}^{+\infty} \frac{1}{s_\ast}\int_{t-2s_\ast}^{t-s_\ast} (t-\tau)^{\frac{p+1}{p-1}}\int_{B_{(i+1)}\setminus B_{i}}|u(y,\tau)|^{p+1} G(x-y,t-\tau)dy d\tau  \nonumber\\
&\leq Cs_\ast^{-\frac{n}{2}+\frac{2}{p-1}}  \sum_{i=1}^{+\infty} \exp\left(-\frac{i^2}{8s_\ast}\right)  \int_{t-2s_\ast}^{t-s_\ast}\int_{B_{(i+1)}(x)\setminus B_{i}(x)}|u(y,\tau)|^{p+1} dyd\tau.
\end{align}
We can cover $(B_{(i+1)}(x)\setminus B_{i}(x))\times(t-2s_\ast,t-s_\ast)$ by at most $Ci^n s_\ast^{-\frac{n}{2}}$ backward parabolic cylinders of size $\sqrt{s_\ast}$. Then by summing the Morrey estimate \eqref{Morreyeatimates} on these cylinders, we obtain
\[\int_{t-2s_\ast}^{t-s_\ast}\int_{B_{(i+1)}(x)\setminus B_{i}(x)}|u(y,\tau)|^{p+1} dyd\tau\leq Ci^n  s_\ast^{\frac{n}{2}+1-\frac{p+1}{p-1}}.\]
Plugging this estimate into \eqref{estiamte on II, sec 4}, we get
\begin{align*}
   \mathrm{II} &\leq C \sum_{i=1}^{+\infty} \exp\left(-\frac{i^2}{8s_\ast}\right) i^n \\
    & \leq Cs_\ast^{\frac{n}{2}} \int_{s_\ast^{-1/2}}^{+\infty}  \tau^n e^{-\frac{\tau^2}{8}}d\tau\\
    &\leq Cs_\ast^{\frac{n}{2}}e^{-\frac{1}{16s_\ast}}\\
    &\leq \frac{\varepsilon_0}{6},
\end{align*}
provided $s_\ast$ is small enough.

The proof is complete by putting the estimates on $\mathrm{I}$ and $\mathrm{II}$ together.
\end{proof}

%We now finish the proof of Proposition \ref{pro1}.
\begin{proof}[Proof of Proposition \ref{pro1}]
The previous lemma allows us to use  Proposition \ref{epsilonregularity} in $Q_{\sqrt{s_\ast}}^-(x,t)$, for any $(x,t)\in B_{R-3}\times[-1,-1+s_\ast]$. This gives
\[|u| \leq C,\quad\text{in}~B_{R-4}(0)\times \left[-1-\frac{s_\ast}{2}, -1+s_\ast\right].\]

Then by standard parabolic regularity theory,  there exists a universal  constant $C$  such that
\[ |\partial_tu|+|\partial_{tt}u|\leq C,\quad\text{in}~B_{R-5}(0)\times \left[-1, -1+s_\ast\right].\]
An integration in $t$ gives, for any $x\in B_{R-5}(0)$,
\begin{equation}\label{B.1}
|u(x,-1)-u(x,-1+s_\ast)|+|\partial_tu(x,-1)-\partial_tu(x,-1+s_\ast)|\leq Cs_\ast.
\end{equation}

Recall that $u(x,t)=(-t)^{-\frac{1}{p-1}}w\left(\frac{x}{\sqrt{-t}}\right)$, so
\begin{align*}
\partial_tu(x,t)&=(-t)^{-\frac{p}{p-1}}\left[\frac{1}{p-1}w\left(\frac{x}{\sqrt{-t}}\right)+\frac{1}{2}\frac{x}{\sqrt{-t}}\cdot\nabla w\left(\frac{x}{\sqrt{-t}}\right)\right]\\
&=(-t)^{-\frac{p}{p-1}}\Lambda(w)\left(\frac{x}{\sqrt{-t}}\right).
\end{align*} 
In particular, 
\[u(x,-1)=w(x) \quad \text{and} \quad \partial_tu(x,-1)=\Lambda(w)(x).\]
Plugging these identities into \eqref{B.1}, we obtain, for any $x\in B_{R-5}(0)$,
\begin{align*}
    &\left|w(x)-(1-s_\ast)^{-\frac{1}{p-1}} w\left(\frac{x}{\sqrt{1-s_\ast}}\right)\right|\\
&\quad +\left|\Lambda(w)(x)-(1-s_\ast)^{-\frac{p}{p-1}} \Lambda(w)\left(\frac{x}{\sqrt{1-s_\ast}}\right)\right|\leq Cs_\ast.
\end{align*}
Combining this inequality with the assumption, which says
\[\text{dist}_{R}(w,\kappa)+\text{dist}_{R}\left(\Lambda(w), \frac{2}{p-1}\kappa\right)\leq \frac{\kappa}{10(p+1)},\]
we get
 \begin{align*}
    &\left|(1-s_\ast)^{-\frac{1}{p-1}} w\left(\frac{x}{\sqrt{1-s_\ast}}\right)-\kappa\right|\\
&\quad +\left| (1-s_\ast)^{-\frac{p}{p-1}} \Lambda(w)\left(\frac{x}{\sqrt{1-s_\ast}}\right)-\frac{2}{p-1}\kappa\right|\leq \frac{\kappa}{10(p+1)}+Cs_\ast.
\end{align*}
First, this implies that
\[ \sup_{x\in B_{R-5}(0)} \left| w\left(\frac{x}{\sqrt{1-s_\ast}}\right)\right| +\left|  \Lambda(w)\left(\frac{x}{\sqrt{1-s_\ast}}\right)\right|\leq \frac{10\kappa}{p-1}+Cs_\ast.\]
Then an Taylor expansion of $(1-s_\ast)^{-\frac{1}{p-1}}$ and $(1-s_\ast)^{-\frac{p}{p-1}}$ gives
 \begin{equation}\label{B2}
  \left| w\left(\frac{x}{\sqrt{1-s_\ast}}\right)-\kappa\right|\\
 +\left|   \Lambda(w)\left(\frac{x}{\sqrt{1-s_\ast}}\right)-\frac{2}{p-1}\kappa\right|\leq \frac{\kappa}{10(p+1)}+Cs_\ast.
\end{equation}

Choose $s_\ast$  small so that
\[\frac{\kappa}{10(p+1)}+Cs_\ast\leq \frac{\kappa}{p+1}.\]
Choose  $R_2$ large so that there exists $\theta>0$ such that
\[ \frac{R_2-5}{\sqrt{1-s_\ast}}\geq (1+\theta)R_2.\]
Then \eqref{B2} implies that
\[\text{dist}_{(1+\theta)R}(w,\kappa)+\text{dist}_{(1+\theta)R}\left(\Lambda(w), \frac{2}{p-1}\kappa\right)\leq \frac{\kappa}{p+1}. \qedhere\]
\end{proof}

\section{Proof of Lemma \ref{lem L2 convergence lift to smooth}}\label{sec proof of Lemma 4.6}
	\setcounter{equation}{0}
  
In view of Proposition \ref{epsilonregularity},  we need only to prove the following claim.\\
{\bf Claim.} There exist  two constants $s_\ast, k_{0}$ such that   for any $(x, t)\in B_{1}\times (-2, -1+s_{\ast})$ and $k\geq k_0$,
\begin{equation}\label{lem L2 convergence lift to smooth1}
\bar{E}(s_\ast; x, t, u_{k})<\varepsilon_{0},
\end{equation}
where $\varepsilon_{0}$ is the positive constant in Proposition \ref{epsilonregularity}.

Once this claim is established, an application of Proposition \ref{epsilonregularity} gives the uniform estimate \eqref{5.3}. The $C^{2,1}$ convergence then follows from standard parabolic regularity theory and Arzela-Ascoli theorem.

Similar to the proof of  Lemma \ref{lem small of monotonicity quantity}, we   only   prove the estimate 
 \begin{equation}\label{lem L2 convergence lift to smooth2}
s^{-1}\int_{s}^{2s}\int_{\mathbb{R}^{n}}\tau^{\frac{p+1}{p-1}}|u_{k}(y, t-\tau)|^{p+1}G(y-x,\tau)dyd\tau<\frac{\varepsilon_0}{3},
\end{equation}
whenever $k>k_{0}$ and $0<s\leq s_{\ast}$.

Decompose the above integral into
\[I_{1}=s^{-1}\int_{s}^{2s}\int_{\mathbb{R}^{n}\backslash B_{2}(0)}\tau^{\frac{p+1}{p-1}}|u_{k}(y, t-\tau)|^{p+1}G(y-x,\tau)dyd\tau,\]
and
\[I_{2}=s^{-1}\int_{s}^{2s}\int_{B_{2}(0)}\tau^{\frac{p+1}{p-1}}|u_{k}(y, t-\tau)|^{p+1}G(y-x,\tau)dyd\tau.\]

Concerning $I_{1}$, for any $s\in(0,\frac{1}{8})$, we have
\begin{align}\label{estimate on I1}
I_{1}=&s^{-1}\int_{s}^{2s}\int_{\mathbb{R}^{n}\backslash B_{2}(0)}\tau^{\frac{p+1}{p-1}}|u_{k}(y, t-\tau)|^{p+1}G(y-x,\tau)dyd\tau \nonumber\\
\leq& Cs^{-1+\frac{p+1}{p-1}-\frac{n}{2}}e^{-\frac{1}{s}}\int_{s}^{2s}\int_{\mathbb{R}^{n}\backslash B_{2}(0)}|u_{k}(y, t-\tau)|^{p+1}e^{-\frac{|y-x|^{2}}{2}}dyd\tau\\
\leq &Cs^{-1+\frac{p+1}{p-1}-\frac{n}{2}}e^{-\frac{1}{s}}\int_{t-s-\frac{1}{4}}^{t-s}\int_{\mathbb{R}^{n}\backslash B_{2}(0)}|u_{k}(y, \tau)|^{p+1}e^{-\frac{|y-x|^{2}}{2}}dyd\tau, \nonumber
\end{align}
where $C$ is a positive constant independent of $k$. 

Since $\{u_{k}\}\subset\mathcal{F}_{M}$, we have
\[\begin{aligned}
&\int_{t-s-\frac{1}{4}}^{t-s}\int_{\mathbb{R}^{n}\backslash B_{2}(0)}|u_{k}(y, \tau)|^{p+1}e^{-\frac{|y-x|^{2}}{2}}dyd\tau\\
=&\sum_{j=2}^{\infty}\int_{t-s-\frac{1}{4}}^{t-s}\int_{\{j\leq |y|\leq j+\frac{1}{2}\}}|u_{k}(y, \tau)|^{p+1}e^{-\frac{|y-x|^{2}}{2}}dyd\tau\\
\leq& C\sum_{j=2}^{\infty}(j+1)^{n}e^{-\frac{(j-1)^{2}}{2}}\\
\leq & C
\end{aligned}\]
where $C$ is a positive constant depending on $n $ and $M$. In the process of deriving the above estimate, we have applied the Morrey bound in \eqref{Morreyeatimates} and the fact that we can cover $B_{j+1}(0)\times(t-s-1, t-s)$ by at most $C(n)(j+1)^{n}$ backward parabolic cylinders of size $1$.

Plugging the above  estimate into \eqref{estimate on I1}, we conclude that there exists a positive constant $C$ depending only on $n, M, p$ such that
\[I_{1}\leq Cs^{-1+\frac{p+1}{p-1}-\frac{n}{2}}e^{-\frac{1}{s}}.\]
By taking $s_{\ast, 1}$ small enough, we get for any $0<s<s_{\ast, 1}$ and $(x, t)\in  B_{1}\times (-2, -1+\frac{s}{2})$,
\[I_{1}=s^{-1}\int_{s}^{2s}\int_{\mathbb{R}^{n}\backslash B_{2}(0)}\tau^{\frac{p+1}{p-1}}|u_{k}(y, t-\tau)|^{p+1}G(y-x,\tau)dyd\tau<\frac{\varepsilon_{0}}{6}.\]

For the second term, we have
\[\begin{aligned}
I_{2}\leq & s^{-1}\int_{s}^{2s}\int_{B_{2}(0)}\tau^{\frac{p+1}{p-1}}|u_{\infty}(y, t-\tau)|^{p+1}G(y-x,\tau)dyd\tau\\
+& s^{-1}\int_{s}^{2s}\int_{B_{2}(0)}\tau^{\frac{p+1}{p-1}}\big||u_{k}(y, t-\tau)|^{p+1}-|u_{\infty}(y, t-\tau)|^{p+1}\big|G(y-x,\tau)dyd\tau.
\end{aligned}\]
Since $u_{\infty}$ is smooth in $Q_4^-(0, -1)$, there exists a constant $0<s_{\ast, 2}<\min\{s_{\ast,1},\frac{1}{4}\}$ such  that for any $(x, t)\in  B_{1}\times (-2, -1+\frac{s_{\ast, 2}}{2})$,
\[s_{\ast,2}^{-1}\int_{s_{\ast,2}}^{2s_{\ast,2}}\int_{B_{2}(0)}\tau^{\frac{p+1}{p-1}}|u_{\infty}(y, t-\tau)|^{p+1}G(y-x,\tau)dyd\tau\leq Cs_{\ast,2}^{\frac{p+1}{p-1}}<\frac{\varepsilon_{0}}{12},\]
where $C$ depends only on $n, p$ and $\|u_{\infty}\|_{L^{\infty}(Q_4^-(0, -1))}$.

Once $s_{\ast, 2}$ is fixed, then there exists a positive constant $C$ depending only on $n, p$ and $s_{\ast, 2}$ such that
\begin{align}\label{C4}
I_{2}\leq \frac{\varepsilon_0}{12}+C\int_{-3}^{-\frac{1}{2}}\int_{B_{2}(0)}\big||u_{k}^{p+1}|-|u_{\infty}|^{p+1}\big|dydt.
\end{align}
Since $u_{k}\to u_{\infty}$ in $L^{p+1}_{loc}$,   we can choose $k_{0}$ so that for any   $k>k_{0}$,
\[C\int_{-3}^{-\frac{1}{2}}\int_{B_{2}(0)}\big||u_{k}^{p+1}|-|u_{\infty}|^{p+1}\big|dydt<\frac{\varepsilon_0}{12}\]
and then by \eqref{C4},
\[I_2\leq \frac{\varepsilon_0}{6}.\]
The proof is complete by taking $s_{\ast}=\min\{s_{\ast, 1}, s_{\ast, 2}, \frac{1}{8}\}$.


\begin{thebibliography}{10}
	
	\bibitem{Blatt-Struwe}
	Simon Blatt and Michael Struwe.
	\newblock An analytic framework for the supercritical {L}ane-{E}mden equation and its gradient flow.
	\newblock {\em Int. Math. Res. Not. IMRN}, (9):2342--2385, 2015.
	
	\bibitem{Choi-H-H2022}
	Kyeongsu Choi, Robert Haslhofer, and Or~Hershkovits.
	\newblock Ancient low-entropy flows, mean-convex neighborhoods, and uniqueness.
	\newblock {\em Acta Math.}, 228(2):217--301, 2022.
	
	\bibitem{Choi-H-H-W2022}
	Kyeongsu Choi, Robert Haslhofer, Or~Hershkovits, and Brian White.
	\newblock Ancient asymptotically cylindrical flows and applications.
	\newblock {\em Invent. Math.}, 229(1):139--241, 2022.
	
	\bibitem{Chou-DU-Zheng2007}
	Kai-Seng Chou, Shi-Zhong Du, and Gao-Feng Zheng.
	\newblock On partial regularity of the borderline solution of semilinear parabolic problems.
	\newblock {\em Calc. Var. Partial Differential Equations}, 30(2):251--275, 2007.
	
	\bibitem{Colding-I-M2015}
	Tobias~Holck Colding, Tom Ilmanen, and William~P. Minicozzi, II.
	\newblock Rigidity of generic singularities of mean curvature flow.
	\newblock {\em Publ. Math. Inst. Hautes \'{E}tudes Sci.}, 121:363--382, 2015.
	
	\bibitem{Colding-Minicozzi2012}
	Tobias~Holck Colding and William~P. Minicozzi, II.
	\newblock Generic mean curvature flow {I}: generic singularities.
	\newblock {\em Ann. of Math. (2)}, 175(2):755--833, 2012.
	
	\bibitem{Colding-Minicozzi2016}
	Tobias~Holck Colding and William~P. Minicozzi, II.
	\newblock The singular set of mean curvature flow with generic singularities.
	\newblock {\em Invent. Math.}, 204(2):443--471, 2016.
	
	\bibitem{Collot2017-nonradial}
	Charles Collot.
	\newblock Nonradial type {II} blow up for the energy-supercritical semilinear heat equation.
	\newblock {\em Anal. PDE}, 10(1):127--252, 2017.
	
	\bibitem{Collot-Merle-Raphael2020}
	Charles Collot, Frank Merle, and Pierre Rapha\"{e}l.
	\newblock Strongly anisotropic type {II} blow up at an isolated point.
	\newblock {\em J. Amer. Math. Soc.}, 33(2):527--607, 2020.
	
	\bibitem{Collot-Raphael-Szeftel2019}
	Charles Collot, Pierre Rapha\"{e}l, and Jeremie Szeftel.
	\newblock On the stability of type {I} blow up for the energy super critical heat equation.
	\newblock {\em Mem. Amer. Math. Soc.}, 260(1255):v+97, 2019.
	
	\bibitem{Del-Musso-Wei2019}
	Manuel del Pino, Monica Musso, and Jun~Cheng Wei.
	\newblock Type {II} blow-up in the 5-dimensional energy critical heat equation.
	\newblock {\em Acta Math. Sin. (Engl. Ser.)}, 35(6):1027--1042, 2019.
	
	\bibitem{Wei-zhou2020}
	Manuel~Adrian del Pino, Chen-Chih Lai, Monica Musso, Juncheng Wei, and Yifu Zhou.
	\newblock New type \textrm{II} finite time blow-up for the energy supercritical heat equation.
	\newblock {\em arXiv:2006.00716}, 2020.
	
	\bibitem{Del-Wei-zhou2020}
	Manuel~Adrian del Pino, Monica Musso, Juncheng Wei, Qidi Zhang, and Yifu Zhou.
	\newblock Type \textrm{II} ﬁnite time blow-up for the three dimensional energy critical heat equation.
	\newblock {\em arXiv:\textrm{A}nalysis of \textrm{PDE}s}, 2020.
	
	\bibitem{Dushizhong2019}
	Shi-Zhong Du.
	\newblock Energy non-collapsing and refined blowup for a semilinear heat equation.
	\newblock {\em J. Differential Equations}, 266(9):5942--5970, 2019.
	
	\bibitem{Lin-Yang-book}
	Lin Fanghua and Yang Xiaoping.
	\newblock {\em Geometric measure theory—an introduction. \textrm{A}dvanced \textrm{M}athematics (\textrm{B}eijing/\textrm{B}oston)}.
	\newblock Science Press Beijing, Beijing: International Press, Boston, MA, 2002.
	
	\bibitem{Fermanian-Merle-Zaag}
	C.~Fermanian~Kammerer, F.~Merle, and H.~Zaag.
	\newblock Stability of the blow-up profile of non-linear heat equations from the dynamical system point of view.
	\newblock {\em Math. Ann.}, 317(2):347--387, 2000.
	
	\bibitem{Velazquez2000}
	Stathis Filippas, Miguel~A. Herrero, and Juan J.~L. Vel\'{a}zquez.
	\newblock Fast blow-up mechanisms for sign-changing solutions of a semilinear parabolic equation with critical nonlinearity.
	\newblock {\em R. Soc. Lond. Proc. Ser. A Math. Phys. Eng. Sci.}, 456(2004):2957--2982, 2000.
	
	\bibitem{Fujita1966}
	Hiroshi Fujita.
	\newblock On the blowing up of solutions of the {C}auchy problem for {$u\sb{t}=\Delta u+u\sp{1+\alpha }$}.
	\newblock {\em J. Fac. Sci. Univ. Tokyo Sect. I}, 13:109--124 (1966), 1966.
	
	\bibitem{Giga-Kohn1985}
	Yoshikazu Giga and Robert~V. Kohn.
	\newblock Asymptotically self-similar blow-up of semilinear heat equations.
	\newblock {\em Comm. Pure Appl. Math.}, 38(3):297--319, 1985.
	
	\bibitem{Giga-Kohn2}
	Yoshikazu Giga and Robert~V. Kohn.
	\newblock Characterizing blowup using similarity variables.
	\newblock {\em Indiana Univ. Math. J.}, 36(1):1--40, 1987.
	
	\bibitem{Giga-Kohn1989}
	Yoshikazu Giga and Robert~V. Kohn.
	\newblock Nondegeneracy of blowup for semilinear heat equations.
	\newblock {\em Comm. Pure Appl. Math.}, 42(6):845--884, 1989.
	
	\bibitem{Giga04}
	Yoshikazu Giga, Shin'ya Matsui, and Satoshi Sasayama.
	\newblock Blow up rate for semilinear heat equations with subcritical nonlinearity.
	\newblock {\em Indiana Univ. Math. J.}, 53(2):483--514, 2004.
	
	\bibitem{Gi-Tr}
	David Gilbarg and Neil~S. Trudinger.
	\newblock {\em Elliptic partial differential equations of second order. \textrm{R}eprint of the 1998 edition}.
	\newblock Springer-Verlag, Berlin, 2001.
	
	\bibitem{Harada2020(1)}
	Junichi Harada.
	\newblock A higher speed type {II} blowup for the five dimensional energy critical heat equation.
	\newblock {\em Ann. Inst. H. Poincar\'{e} C Anal. Non Lin\'{e}aire}, 37(2):309--341, 2020.
	
	\bibitem{Harada2020(2)}
	Junichi Harada.
	\newblock A type {II} blowup for the six dimensional energy critical heat equation.
	\newblock {\em Ann. PDE}, 6(2):Paper No. 13, 63, 2020.
	
	\bibitem{Harada}
	Junichi Harada.
	\newblock Stability of nondegenerate ode type blowup for the \textrm{F}ujita type heat equation.
	\newblock {\em arXiv:2406.14218}, 2024.
	
	\bibitem{Merle-Matano2004}
	Hiroshi Matano and Frank Merle.
	\newblock On nonexistence of type {II} blowup for a supercritical nonlinear heat equation.
	\newblock {\em Comm. Pure Appl. Math.}, 57(11):1494--1541, 2004.
	
	\bibitem{Merle-Matano2009}
	Hiroshi Matano and Frank Merle.
	\newblock Classification of type {I} and type {II} behaviors for a supercritical nonlinear heat equation.
	\newblock {\em J. Funct. Anal.}, 256(4):992--1064, 2009.
	
	\bibitem{Merle-Matano2011}
	Hiroshi Matano and Frank Merle.
	\newblock Threshold and generic type {I} behaviors for a supercritical nonlinear heat equation.
	\newblock {\em J. Funct. Anal.}, 261(3):716--748, 2011.
	
	\bibitem{Merle-R-S2020-TypeI}
	Frank Merle, Pierre Rapha\"el, and Jeremie Szeftel.
	\newblock On strongly anisotropic type {I} blowup.
	\newblock {\em Int. Math. Res. Not. IMRN}, Volume 2020(2):541--606, 2020.
	
	\bibitem{Merle-Zaag1997}
	Frank Merle and Hatem Zaag.
	\newblock Stability of the blow-up profile for equations of the type {$u_t=\Delta u+|u|^{p-1}u$}.
	\newblock {\em Duke Math. J.}, 86(1):143--195, 1997.
	
	\bibitem{Merle-Zaag}
	Frank Merle and Hatem Zaag.
	\newblock Optimal estimates for blowup rate and behavior for nonlinear heat equations.
	\newblock {\em Comm. Pure Appl. Math.}, 51(2):139--196, 1998.
	
	\bibitem{Merle-Zaag2000}
	Frank Merle and Hatem Zaag.
	\newblock A {L}iouville theorem for vector-valued nonlinear heat equations and applications.
	\newblock {\em Math. Ann.}, 316(1):103--137, 2000.
	
	\bibitem{Mizoguchi1}
	Noriko Mizoguchi.
	\newblock Nonexistence of backward self-similar blowup solutions to a supercritical semilinear heat equation.
	\newblock {\em J. Funct. Anal.}, 257(9):2911--2937, 2009.
	
	\bibitem{Mizoguchi2}
	Noriko Mizoguchi.
	\newblock On backward self-similar blow-up solutions to a supercritical semilinear heat equation.
	\newblock {\em Proc. Roy. Soc. Edinburgh Sect. A}, 140(4):821--831, 2010.
	
	\bibitem{Mizoguchi3}
	Noriko Mizoguchi.
	\newblock Blow-up rate of type {II} and the braid group theory.
	\newblock {\em Trans. Amer. Math. Soc.}, 363(3):1419--1443, 2011.
	
	\bibitem{Mizoguchi4}
	Noriko Mizoguchi.
	\newblock Nonexistence of type {II} blowup solution for a semilinear heat equation.
	\newblock {\em J. Differential Equations}, 250(1):26--32, 2011.
	
	\bibitem{Polacik-Quittner2021-Liouville}
	Peter Pol\'a\v{c}ik and Pavol Quittner.
	\newblock Entire and ancient solutions of a supercritical semilinear heat equation.
	\newblock {\em Discrete Contin. Dyn. Syst.}, 41(1):413--438, 2021.
	
	\bibitem{Quittner-Souplet2019}
	Pavol Quittner and Philippe Souplet.
	\newblock {\em Superlinear parabolic problems. {B}low-up, global existence and steady states}.
	\newblock Birkh\"{a}user/Springer, Cham, 2019.
	
	\bibitem{Felix-Natasa}
	Felix Schulze and Nata\u{s}a \u{S}e\u{s}um.
	\newblock Stability of neckpinch singularities.
	\newblock {\em arXiv:2006.06118}, 2020.
	
	\bibitem{Schweyer}
	R\'{e}mi Schweyer.
	\newblock Type {II} blow-up for the four dimensional energy critical semi linear heat equation.
	\newblock {\em J. Funct. Anal.}, 263(12):3922--3983, 2012.
	
	\bibitem{Seki2018-JL}
	Yukihiro Seki.
	\newblock Type {II} blow-up mechanisms in a semilinear heat equation with critical {J}oseph-{L}undgren exponent.
	\newblock {\em J. Funct. Anal.}, 275(12):3380--3456, 2018.
	
	\bibitem{Seki2020-Lepin}
	Yukihiro Seki.
	\newblock Type {II} blow-up mechanisms in a semilinear heat equation with {L}epin exponent.
	\newblock {\em J. Differential Equations}, 268(3):853--900, 2020.
	
	\bibitem{Souplet2017-Morrey}
	Philippe Souplet.
	\newblock Morrey spaces and classification of global solutions for a supercritical semilinear heat equation in {$\Bbb R^n$}.
	\newblock {\em J. Funct. Anal.}, 272(5):2005--2037, 2017.
	
	\bibitem{Sun-Xue}
	Ao~Sun and Jinxin Xue.
	\newblock Generic mean curvature flows with cylindrical singularities.
	\newblock {\em arXiv:2210.00419}, 2024.
	
	\bibitem{Takahashi}
	Jin Takahashi and Hideyuki Miura.
	\newblock Critical norm blow-up for the supercritical semilinear heat equation.
	\newblock {\em J. Eur. Math. Soc.}, accepted for publication, 2024.
	
	\bibitem{Velazquez1993}
	J.~J.~L. Vel\'{a}zquez.
	\newblock Classification of singularities for blowing up solutions in higher dimensions.
	\newblock {\em Trans. Amer. Math. Soc.}, 338(1):441--464, 1993.
	
	\bibitem{Velazquez1993-2}
	J.~J.~L. Vel\'{a}zquez.
	\newblock Estimates on the {$(n-1)$}-dimensional {H}ausdorff measure of the blow-up set for a semilinear heat equation.
	\newblock {\em Indiana Univ. Math. J.}, 42(2):445--476, 1993.
	
	\bibitem{Wang-Wei2021}
	Kelei Wang and Juncheng Wei.
	\newblock Refined blowup analysis and nonexistence of \textrm{T}ype \textrm{II} blowups for an energy critical nonlinear heat equation.
	\newblock {\em arXiv:2101.07186}, 2021.
	
	\bibitem{Wang-Wei-Wu}
	Kelei Wang, Juncheng Wei, and Ke~Wu.
	\newblock F-stability, entropy and energy gap for supercritical fujita equation.
	\newblock {\em arXiv:2407.20035}, 2024.
	
	\bibitem{White1997}
	Brian White.
	\newblock Stratification of minimal surfaces, mean curvature flows, and harmonic maps.
	\newblock {\em J. Reine Angew. Math.}, 488:1--35, 1997.
	
	\bibitem{Zaag-2002}
	Hatem Zaag.
	\newblock On the regularity of the blow-up set for semilinear heat equations.
	\newblock {\em Ann. Inst. H. Poincar\'{e} C Anal. Non Lin\'{e}aire}, 19(5):505--542, 2002.
	
	\bibitem{Zaag-2006}
	Hatem Zaag.
	\newblock Determination of the curvature of the blow-up set and refined singular behavior for a semilinear heat equation.
	\newblock {\em Duke Math. J.}, 133(3):499--525, 2006.
	
\end{thebibliography}
\end{document}